\begin{document}
	\newtheorem{theorem}{Theorem}
	\newtheorem{proposition}[theorem]{Proposition}
	\newtheorem{conjecture}[theorem]{Conjecture}
	\newtheorem{corollary}[theorem]{Corollary}
	\newtheorem{lemma}[theorem]{Lemma}
	\newtheorem{sublemma}[theorem]{Sublemma}
	\newtheorem{observation}[theorem]{Observation}
	\newtheorem{remark}[theorem]{Remark}
	\newtheorem{definition}[theorem]{Definition}
	\theoremstyle{definition}
	\newtheorem{notation}[theorem]{Notation}
	\newtheorem{question}[theorem]{Question}
	\newtheorem{example}[theorem]{Example}
	\newtheorem{problem}[theorem]{Problem}
	\newtheorem{exercise}[theorem]{Exercise}
	\numberwithin{theorem}{section} 
	\numberwithin{equation}{section}
	
	\title[Maximal Riesz transform in terms of Riesz transform]{Maximal Riesz transform in terms of Riesz transform on quantum tori and Euclidean space}

	\author{Xudong Lai}
	\address{Institute for Advanced Study in Mathematics, Harbin Institute of Technology, 150001 Harbin, China; Zhengzhou Research Institute\\
		Harbin Institute of Technology\\
		Zhengzhou
		450000\\
		China}
	\email{xudonglai@hit.edu.cn}
	\author{Xiao Xiong}
	\address{Institute for Advanced Study in Mathematics, Harbin Institute of Technology, 150001 Harbin, China}
	\email{xxiong@hit.edu.cn}
	
	\author{Yue Zhang}
	\address{Institute for Advanced Study in Mathematics, Harbin Institute of Technology, 150001 Harbin, China}
	\email{21b912030@stu.hit.edu.cn}
	
	%
	\thanks{{\it 2000 Mathematics Subject Classification:} 42B20 · 42B25 · 46L52 · 46E40}
	\thanks{{\it Key words:} Maximal Riesz transform;  quantum tori; quantum Euclidean space; transference principle; dimension-free}
	
	\date{}

	\markboth{ }
	{Maximal Riesz transform in terms of  Riesz transform}
	\begin{abstract}
		For $1<p<\infty$, we establish the $L_{p}$ boundedness of  the maximal Riesz transforms in terms of the Riesz transforms on quantum tori  $L_{p}(\mathbb{T}^{d}_{\theta})$,  and quantum Euclidean space  $L_{p}(\mathbb{R}^{d}_{\theta})$. In particular, the norm constants in both cases are independent of the dimension  $d$ when $2\leq p<\infty$. 
	\end{abstract}
	\maketitle
	\section{Introduction}

	The Riesz transforms $R_{j}$, $1\leq j\leq d$, play  a crucial role in classical harmonic analysis and have been explored from various perspectives  (see e.g. \cite{MR4778175, MR4585167,liu2023p,MR2280788,MR4631033}). The aim of this paper is to continue the work of  Mateu and Verdera \cite{MR2280788}, Liu, Melentijevi\'{c}  and Zhu \cite{liu2023p}  on  the $L_{p}$ boundedness   of the maximal Riesz transforms  in terms of the Riesz transforms, however in the noncommutative setting. Let us first recall these results.   For a Schwartz function  $f$,    its  $j$-th Riesz transform   is defined by 
	\begin{align*}
		R_{j}f(x) =\lim_{\varepsilon\rightarrow 0}R_{j}^{\varepsilon}f(x),
	\end{align*}
	where 
	\begin{align}\label{24312.1} 
		R_{j}^{\varepsilon}f(x) =\frac{\Gamma(\frac{d+1}{2})}{\pi^{\frac{d+1}{2}}}\int_{|x-y|>\varepsilon}f(y)\frac{x_{j}-y_{j}}{\,|x-y|^{d+1}}dy 
	\end{align} 
	is the associated truncated Riesz transform.  Let 
	\begin{align*} 
		R_{j}^{*}f(x)=\sup_{\varepsilon>0}|R_{j}^{\varepsilon}f(x)|
	\end{align*}
	be the maximal Riesz transform. It is clear that the pointwise inequality $|R_{j}f(x)|\leq R^{*}_{j}f(x)$ holds for  Schwartz functions.  As for the reverse inequality,  Mateu and Verdera \cite{MR2280788}  showed that there exists a constant $C_{d,p}$  such that 
	\begin{align}\label{24520.2}
		\|R_{j}^{*}f\|_{L_{p}(\mathbb{R}^d)}\leq C_{d,p}\|R_{j} f\|_{L_{p}(\mathbb{R}^d)},~~~1<p<\infty.
	\end{align}
	
	A natural problem is whether  the constant $C_{d,p}$ in  (\ref{24520.2}) is independent of the dimension $d$.  Kucharski and Wr\'{o}bel \cite{MR4585167} gave a positive answer in the case of $p=2$.
	They used the Littlewood-Paley decomposition technique to obtain a universal $L_{2}$ estimate $C_{d,2}\leq 2\cdot10^{8}$. Further, for $2\leq p\leq\infty$, Liu et al. \cite{liu2023p}   improved the estimate of $C_{d,p}$ as
	\begin{align*}
		\|R_{j}^{*}f\|_{L_{p}(\mathbb{R}^d)}\leq \bigg(2+\frac{\sqrt{2}}{2}\bigg)^{\frac{2}{p}}\|R_{j} f\|_{L_{p}(\mathbb{R}^d)}.
	\end{align*}
	Up to now, the dimension-free estimate of the maximal Riesz transforms in terms of the Riesz transforms for $1<p<2$ is still an open problem.
	
	In recent years, the noncommutative harmonic analysis has developed rapidly  based on  operator algebra,  noncommutative geometry, and quantum mechanics
	(see e.g. \cite{MR3079331,MR2574890,hong2022noncommutative,1,MR4623347,MR4585152,MR1916654,MR4381188,MR4746875,MR4029831,MR4156216,MR2327840}).    
	Due to the noncommutativity, the study of the noncommutative theory requires  novel ideas  and techniques. For instance,  let $\mathcal{M}$ be a von Neumann algebra and $B$ the unit ball in $\mathbb{R}^d$. Define 
	\begin{align}\label{241227.2}
		M_{t}f(x)=\frac{1}{|B|}\int_{B}f(x-ty)dy  
	\end{align}
	acting on  some operator-valued function $f:\mathbb{R}^d\rightarrow \mathcal{ {M}}$, which belongs to the noncommutative $L_{p}$ space.   The Hardy-Littlewood maximal operator $Mf=\sup_{t>0}M_{t}f$ for a sequence of operators $(M_{t}f)_{t>0}$ seems to be unavailable since we can not directly compare two operators in $\mathcal{M}$. Thus it is much more subtle to explain its  maximal $L_{p}$ inequality in the noncommutative  setting. This obstacle was not overcome until the introduction of Pisier’s vector-valued noncommutative   spaces $L_{p}(L_{\infty}(\mathbb{R}^d)\overline{\otimes}\mathcal{M};l_{\infty})$ (see \cite{MR1648908}), which came nearly three decades after the first appearance of two  non-trivial maximal weak $(1,1)$ inequalities: one by   Cuculescu \cite{MR295398} for noncommutative martingales in 1971,  and another by Yeadon \cite{MR487482}  for ergodic averages in 1977. Following Pisier’s idea, the  $L_{p}$ norm of the   Hardy-Littlewood maximal operator is understood as $L_{p}(L_{\infty}(\mathbb{R}^d)\overline{\otimes}\mathcal{M};l_{\infty})$ norm of  the sequence $(M_{t}f)_{t>0}$ in the noncommutative case.  On this basis,  Mei \cite{MR2327840} showed  the one-dimensional Hardy-Littlewood maximal operator inequality,  relying  on the noncommutative Doob maximal inequality (see \cite{MR1916654}) and two increasing   filtrations  of dyadic $\sigma$-algebras on $\mathbb{R}$. Junge and Xu \cite{MR2276775} established the $L_{p}$ maximal inequality for ergodic average by    employing a noncommutative analogue of the Marcinkiewicz interpolation theorem.

	In the present paper,  we are  interested in  quantum tori and Euclidean space,  and aim to establish  the boundedness of the maximal Riesz transforms in terms of the Riesz transforms on them. 
	
	Quantum tori, as the fundamental examples in the noncommutative geometry,  have been extensively studied  (see e.g. \cite{MR1247054,MR1047281,MR2239597}). The analysis of quantum tori began in \cite{MR1154104,MR1402766}, and the first systematic study of harmonic analysis on quantum tori was presented in \cite{MR3079331}. For recent developments in this topic,  we refer to  \cite{MR3778347,MR4381188,MR4029831,MR3490779,MR3459017,MR3778570} and the references therein.

	To  present our first result, we   introduce  the  definition of quantum tori.  Fix $d\geq2$,  and $\theta=(\theta_{k,l})_{1\leq k,l\leq d}$ is a real   antisymmetric  $d\times d$ matrix. The  $d$-dimensional noncommutative torus $\mathcal{A}_{\theta }$ is the   universal $C^{*}$-algebra generated by $d$ unitary operators $V_{1},\cdots,V_{d}$  satisfying 
	\begin{align*}
		V_{k}V_{l}=e^{2\pi\mathrm{i}\theta_{k,l}}V_{l}V_{k}, ~~~1\leq k,l\leq d.
	\end{align*}
	The algebra  $\mathcal{A}_{\theta }$ admits a faithful tracial state $\tau_{\theta}$.  The $d$-dimensional quantum torus, denoted by  $\mathbb{T}_{\theta }^{d}$, is the weak $*$-closure of $\mathcal{A}_{\theta }$ in the GNS representation of $\tau_{\theta}$. The state  $\tau_{\theta}$    is extended to a normal  faithful tracial state on $\mathbb{T}_{\theta }^{d}$. Note that if $\theta=0$,  $\mathcal{A}_{\theta }=C(\mathbb{T}^{d})$ and $\mathbb{T}_{\theta }^{d}=L_{\infty}(\mathbb{T}^d)$, where $\mathbb{T}^d$ denotes the classical 
	$d$-dimensional tours. Thus   quantum torus $\mathbb{T}_{\theta }^{d}$ could be viewed as a deformation of   $\mathbb{T}^d$.

	Let  $L_{p}(\mathbb{T}^d_{\theta })$ be the  noncommutative $L_{p}$ space associated with pairs $(\mathbb{T}_{\theta}^{d},{\tau}_{\theta })$. For any $\mathbf{x}\in L_{1}(\mathbb{T}^d_{\theta })$ and $n\in\mathbb{Z}^d$, 
	\begin{align*}
		\widehat{\mathbf{x}}(n)={\tau}_{\theta }((\mathbf{V}^n)^{*}\mathbf{x}) 
	\end{align*}
	is called the Fourier coefficient of $\mathbf{x}$, where $\mathbf{V}^{n}=V_{1}^{n_{1}}\cdots V_{d}^{n_{d}}$. The standard Hilbert space arguments
	show that any $\mathbf{x}\in L_{2}(\mathbb{T}^d_{\theta })$ can be written as an $L_2$-convergent series:
	\begin{align*}
		\mathbf{x}=\sum_{n\in\mathbb{Z}^d}\widehat{\mathbf{x}}(n)\mathbf{V}^n,
	\end{align*}
	with the Plancherel formula
	\begin{align}\label{241210.1}
		\|x\|_{ L_{2}(\mathbb{T}^d_{\theta })}=\bigg(\sum_{n\in \mathbb{Z}^d}|\widehat{\mathbf{x}}(n)|^2\bigg)^{\frac{1}{2}}.
	\end{align}
	
	Now, we define the  noncommutative   Riesz transforms  on quantum tori. For this purpose, we   recall the   classical   form  on $\mathbb{T}^d$:
	\begin{align}\label{241117.8}
		R_{j}f(x)=\sum_{n\in \mathbb{Z}^d}(-\mathrm{i})\frac{n_{j}}{|n|}\widehat{f}(n)e^{2\pi\mathrm{i} \langle n,x\rangle},~~~f\in L_{1}(\mathbb{T}^d),
	\end{align} 
	where  $\widehat{f}(n)=\int_{\mathbb{T}^d}f(x)e^{-2\pi\mathrm{i}\langle   n,x\rangle }dx$. Note that on quantum tori,   $\mathbf{V}^n$  plays the role of $e^{2\pi \mathrm{i}   \langle n,\cdot\, \rangle}$.   Thus given   $\mathbf{x}\in L_{2}(\mathbb{T}^d_{\theta })$,    the $j$-th noncommutative  Riesz transform   of $ \mathbf{x}$ is   defined by
	\begin{align*}
		R_{j}\mathbf{x}=\sum_{n\in \mathbb{Z}^d}(-\mathrm{i})\frac{n_{j}}{|n|}\widehat{\mathbf{x}}(n)\mathbf{V}^n.
	\end{align*}
	The Plancherel identity (\ref{241210.1})  ensures that the noncommutative  Riesz transforms are   bounded linear operators on $L_{2}(\mathbb{T}^d_{\theta })$ and the above series   converges in the $L_{2}$ sense.  We follow the same approach to define  the $j$-th noncommutative truncated Riesz transform, which reduces to providing the associated multiplier.  Let   $K_{j}^{\varepsilon}(x)=\frac{\Gamma(\frac{d+1}{2})}{\pi^{\frac{d+1}{2}}}\chi_{|x|>\varepsilon}(x)\frac{x_{j}}{|x|^{d+1}}$ denote   the  convolution kernel of  (\ref{24312.1}).   It was shown in \cite[Lemma 3.1]{MR4585167} that
	\begin{align}\label{24329.4}
		\widehat{K_{j}^{\varepsilon}}(\xi)=(-\mathrm{i})\frac{\xi_{j}}{|\xi|}m(\varepsilon|\xi|), ~~~\xi\in\mathbb{R}^d,~\xi\neq0,
	\end{align}
	with
	\begin{align}\label{240612.1}
		m(x)=\frac{1}{\pi}\int^{\infty}_{2\pi x}\int_{-1}^{1}e^{\mathrm{i}rs}(1-s^2)^{\frac{d-1}{2}}dsdr,~~x>0.
	\end{align}
	Thus, for any $\varepsilon>0$, we define the $j$-th  noncommutative truncated Riesz transform of $\mathbf{x}\in L_{2}(\mathbb{T}^d_{\theta })$   as
	\begin{align*}
		R_{j}^{\varepsilon}\mathbf{x}=\sum_{n\in \mathbb{Z}^d}(-\mathrm{i})\frac{n_{j}}{|n|}m(\varepsilon|n|)\widehat{\mathbf{x}}(n)\mathbf{V}^n.
	\end{align*}
	
	As for the maximal Riesz  transform,   its $L_{p}(\mathbb{T}^d_{\theta })$ norm   should be understood as the  $L_{p}(\mathbb{T}^d_{\theta };l_{\infty}(\mathbb{R}_{+}))$ norm of the sequence $(R_{j}^{\varepsilon}\mathbf{x})_{\varepsilon}$ in the noncommutative case,  denoted by $\|\sup_{\varepsilon}R_{j}^{\varepsilon}\mathbf{x}\|_{L_{p}(\mathbb{T}^d_{\theta })}$. It should be noted
	that $\sup_{\varepsilon}R_{j}^{\varepsilon}\mathbf{x}$ is just a notation as it does not make sense; we  refer  the reader  to Section \ref{24521.1} for the concrete definition of $L_{p}(\mathcal{M}; l_{\infty})$.    
	
	We are ready to state our results  on quantum tori. 
	\begin{theorem}\label{241117.3}
		Let $1<p<\infty$ and $\mathbf{x}\in L_{p}(\mathbb{T}^d_{\theta})$, then there exists a constant $C_{d,p}$ such that 
		\begin{align}\label{241112.3}
			\|\sup_{\varepsilon>0}R_{j}^{\varepsilon}\mathbf{x}\|_{L_{p}(\mathbb{T}^d_{\theta})}\leq C_{d,p}\|R_{j}\mathbf{x}\|_{L_{p}(\mathbb{T}^d_{\theta})}.
		\end{align}
		Moreover, if $2\leq p<\infty$,   the  constant $C_{d,p}$ in  $(\ref{241112.3})$ is independent of the dimension,  
		\begin{align}\label{241112.4}
			C_{d,p}\leq (96+2\sqrt{2})^{\frac{2}{p}}.
		\end{align}
	\end{theorem}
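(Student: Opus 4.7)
The plan is to deduce (\ref{241112.3}) from an operator-valued Euclidean analogue by transference, and to establish that analogue by adapting the Liu--Melentijevi\'c--Zhu pointwise decomposition to Pisier's $L_{p}(\mathcal{M}; l_{\infty})$ framework; the dimension-free refinement (\ref{241112.4}) would then follow by interpolation between a dimension-free $L_{2}$ bound and a trivial endpoint at $p=\infty$.

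\textbf{Transference to the operator-valued setting.} I would first introduce the twisted translation $\pi_{z}(\mathbf{V}^{n})=z^{n}\mathbf{V}^{n}$ for $z\in\mathbb{T}^{d}$, which is a trace-preserving $*$-automorphism of $\mathbb{T}^{d}_{\theta}$ (the twist in the Weyl commutation relations cancels on the two sides of the product). The assignment $\mathbf{x}\mapsto(z\mapsto\pi_{z}(\mathbf{x}))$ is then an isometric embedding of $L_{p}(\mathbb{T}^{d}_{\theta})$ into $L_{p}(L_{\infty}(\mathbb{T}^{d})\overline{\otimes}\mathbb{T}^{d}_{\theta})$. Since the multipliers defining $R_{j}$ and $R_{j}^{\varepsilon}$ on $\mathbb{T}^{d}_{\theta}$ are identical to those of the classical Riesz and truncated Riesz transforms on $\mathbb{T}^{d}$, the embedding intertwines the two sides, and this intertwining respects the Pisier norm on the maximal side. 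Hence (\ref{241112.3}) reduces to an operator-valued inequality on $\mathbb{T}^{d}$ with values in $\mathbb{T}^{d}_{\theta}$, which by a standard periodization is in turn reduced to the corresponding inequality on $\mathbb{R}^{d}$.

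\textbf{Operator-valued maximal Riesz inequality on $\mathbb{R}^{d}$.} From (\ref{24329.4}) one has $R_{j}^{\varepsilon}=T_{\varepsilon}\circ R_{j}$ with $T_{\varepsilon}$ the radial Fourier multiplier $m(\varepsilon|\xi|)$ of (\ref{240612.1}). Following the strategy of Liu--Melentijevi\'c--Zhu, one writes $m$ as a superposition of characteristic functions of radial intervals plus a controlled smooth remainder; Fourier inversion then represents $T_{\varepsilon}f$ as a positive average (in $\varepsilon$) of dilated ball averages of $f$, up to a bounded error controlled by $f$ itself. The entire decomposition lives at the Fourier-multiplier level, so it transfers verbatim to operator-valued $f$. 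Combined with the noncommutative Hardy--Littlewood maximal inequality on operator-valued $L_{p}(\mathbb{R}^{d})$ (valid for $1<p<\infty$), this yields (\ref{241112.3}).

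\textbf{Dimension-free constant and main obstacle.} For (\ref{241112.4}), a dimension-free $L_{2}$-bound for $\sup_{\varepsilon}|R_{j}^{\varepsilon}\mathbf{x}|$ in the Pisier sense of order $96+2\sqrt{2}$ should come from (\ref{241210.1}) together with a square-function analysis of the multiplier family $\{m(\varepsilon|\cdot|)\}_{\varepsilon>0}$, and a trivial $L_{\infty}$ endpoint of constant $1$ then yields the exponent $2/p$ by complex interpolation. The hard part is to carry out the Liu--Melentijevi\'c--Zhu radial decomposition at the operator level while preserving a sharp, dimension-free constant: pointwise comparisons are unavailable in the noncommutative setting, so every intermediate estimate must be realized through genuinely positive operator-valued kernels compatible with Pisier's $L_{p}(\mathcal{M};l_{\infty})$ structure, to which a dimension-free noncommutative Hardy--Littlewood maximal inequality can then be applied.
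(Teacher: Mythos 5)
Your transference step mirrors the paper's argument: the automorphism $\pi_z$ is exactly the paper's $W_z$, and the embedding $\mathbf{x}\mapsto\tilde{\mathbf{x}}$ together with the periodization Lemma \ref{241113.1} reduces the problem to an operator-valued inequality on $\mathbb{R}^d$. Where you assert the embedding ``respects the Pisier norm on the maximal side,'' the paper actually proves a one-sided inequality (\ref{241126.3}) via the factorization definition of $L_p(\mathcal{M};\ell_\infty)$; this is a real step, not a tautology, and should not be waved through. That said, the transference scaffold is essentially right.

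The genuine gap is in the Euclidean heart of the argument, which you describe but do not prove — and, as you yourself acknowledge, this is precisely ``the hard part.'' Two concrete problems. First, you propose to handle all $1<p<\infty$ by adapting the Liu--Melentijevi\'c--Zhu decomposition and invoking ``a dimension-free noncommutative Hardy--Littlewood maximal inequality.'' No such dimension-free $d$-dimensional inequality is known, even commutatively. The paper instead treats the two ranges by genuinely different techniques: for general $1<p<\infty$, it uses Mateu--Verdera's structural identity $K^1_j = R_j h$ (Lemma 4 of \cite{MR2280788}, recorded as (\ref{24829.1})) with $h\in L_q(2B)$ and $|h(x)|\lesssim|x|^{-(d-1)-2}$ away from $2B$, splitting the kernel region and applying the (dimension-dependent) noncommutative Hardy--Littlewood maximal inequality (Proposition \ref{241227.1}) to powers of a positive part of $R_jf$; that yields (\ref{241126.6}). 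Second, for the dimension-free constant at $p\geq2$, the decomposition is not of $m$ into radial characteristic functions, nor is $T_\varepsilon f$ a ``positive average of ball averages.'' The kernel $\phi_t$ is \emph{not} positive. The actual LMZ identity, adapted here, is $\phi_t*f = \frac{1}{t}\int_0^t\phi_s*f\,ds + \frac{1}{t}\int_0^t s\frac{d}{ds}\phi_s*f\,ds$. The first term is controlled by Proposition \ref{24325.3}, which reduces via Fubini to the \emph{one-dimensional} directional noncommutative Hardy--Littlewood maximal inequality (trivially dimension-free, with constant $24$ from Remark \ref{240620.1}); the dimension-free $\|\phi_t\|_{L_1}=1$ from Lemma \ref{241212.4} completes that piece. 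The second term is not ``a bounded error controlled by $f$ itself'' but a square-function term whose $L_2$ bound comes from $\sup_\xi\int_0^\infty|s\frac{d}{ds}\widehat{\phi_s}(\xi)|^2\frac{ds}{s}\le\tfrac12$ after Plancherel and a positive-parts decomposition. The interpolation with the trivial $L_\infty$ endpoint to get the $2/p$ exponent is as you say — but it requires Lemma \ref{24328.1} (density of the range $R_j(L_p(\mathcal{N}))$) and Corollary \ref{24328.5} to transfer the bound on $\sup_t A^t$ to a bound on $\sup_t R_j^t$ in terms of $R_j$, another necessary step absent from your proposal.

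In short: the outer transference is right, but the Euclidean estimate — both the mechanism for general $p$ and the reduction to a one-dimensional maximal function that makes the constant dimension-free — is missing, and the mechanism you sketch (a dimension-free $d$-dimensional noncommutative maximal inequality composed with a ``positive ball-average'' representation of $T_\varepsilon$) would not work.
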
 
	\vskip 0.24cm
	
	The second purpose of this paper is to obtain the boundedness of the  maximal Riesz transforms in terms of the Riesz transforms on the quantum Euclidean space.  This space originated from the study of quantum mechanics in phase space in the 1940s (see \cite{MR18562,MR29330}). As a significant  object in the noncommutative geometry,   quantum Euclidean space  serves as a model example in the literature of   mathematical physics (see \cite{MR2053945}). It has also been explored as an interesting noncommutative setting for   harmonic analysis, including  Fourier restriction  phenomena (see \cite{MR4623347}) and  quantum  differentiability (see  \cite{MR4156216}). Compared with the quantum tori, the study of  the  quantum Euclidean space is more complicated. Since the noncommutative space  $L_{p}(\mathbb{R}_{\theta}^{d})$ can not be embedded isometrically into $L_{p}(\mathbb{R}^d;L_{p}(\mathbb{R}_{\theta}^{d}))$ via a natural  isomorphic mapping,   the results in the semi-commutative case can not  be directly applied to the quantum Euclidean space. However, in  some research fields,  the noncommutative analogue of Calder\'{o}n’s transference principle (see \cite[Theorem 3.1]{MR4202493})  serves as a  useful tool. Recall that  this classical principle  appeared in \cite{MR227354}  and   was  extended   to the noncommutative setting  with the aim of establishing the ergodic theorem for group actions on von Neumann algebras in \cite{MR4202493}.   Hong et al. \cite{1} applied the noncommutative Calder\'{o}n’s transference principle to develop  some ergodic theory on quantum Euclidean space,  building  on  the semi-commutative Calder\'{o}n-Zygmund theory  (see \cite[Theorem 1.3]{MR4585152}).

	To give the concrete definition of the Riesz transforms on the quantum Euclidean space  and state our results,    we  need to provide  a brief introduction to the   quantum Euclidean space. Suppose $d\geq 2$, and $\theta$ is a fixed antisymmetric $d\times d$ matrix. The quantum Euclidean space, denoted by $\mathbb{R}_{\theta}^{d}$, 
	is a von Neumann algebra of $\mathcal{B}(L_{2}(\mathbb{R}^d))$ generated by   unitary operators $(U(t))_{t\in\mathbb{R}^d}$, which  act  on   $L_{2}(\mathbb{R}^d)$ as follows:  
	\begin{align}\label{241212.1}
		(U(t)f)(r)=e^{-\frac{\mathrm{i}}{2} \langle t,\theta r\rangle}f(r-t),~~f\in L_{2}(\mathbb{R}^d),~~r\in\mathbb{R}^d.
	\end{align}
	One can verify that (\ref{241212.1}) satisfies Weyl relation: $U(t)U(s)=e^{\frac{\mathrm{i}}{2} \langle t,\theta s \rangle }U(t+s).$ 
	
	Let  $f\in L_{1}(\mathbb{R}^d)$,  we construct the operator $ U(f)\in \mathbb{R}_{\theta}^{d}$ in the following manner:
	\begin{align*}
		U(f)g=\int_{\mathbb{R}^d}f(t)U(t)gdt,~~g\in L_{2}(\mathbb{R}^d).
	\end{align*}
	The above $L_{2}(\mathbb{R}^d)$-valued  integral is absolutely convergent in the Bochner sense.  	The following image of the Schwartz 
	function $\mathcal{S}(\mathbb{R}^{d})$ under the map $U$  
	\begin{align*}
		\mathcal{S}(\mathbb{R}_{\theta}^{d})=\left\{\mathbf{x}\in \mathbb{R}_{\theta}^{d}: \mathbf{x}=U(f)=  \int_{\mathbb{R}^d}f(t)U(t)dt,~\mbox{for some}~f\in \mathcal{S}(\mathbb{R}^d)\right\} 
	\end{align*}
	is called the  Schwartz space. 	If $\mathbf{x}$ is given by   $\mathbf{x}=U(f)\in \mathcal{S}(\mathbb{R}_{\theta}^{d})$, we define   
	\begin{align*}
		\tau_{\theta}(\mathbf{x})=f(0).
	\end{align*}
	Then $\tau_{\theta}$ can be extended to a normal semifinite faithful trace on $\mathbb{R}_{\theta}^{d}$ and $L_{p}(\mathbb{R}_{\theta}^{d})$ is the noncommutative $L_{p}$ space associated to pairs $(\mathbb{R}_{\theta}^{d},\tau_{\theta})$ with the norm   $\|\mathbf{x}\|_{L_{p}(\mathbb{R}_{\theta}^{d})}=(\tau_{\theta}(|\mathbf{x}|^p))^{\frac{1}{p}}$. Here we abuse the notation $\tau_{\theta}$, which is the same as that of quantum tori. One can easily distinguish them from context. The  Schwartz space $\mathcal{S}(\mathbb{R}_{\theta}^{d})$ 
	is dense in  $L_{p}(\mathbb{R}_{\theta}^{d})$ for $1\leq p<\infty$ with  respect to the norm and dense in $\mathbb{R}_{\theta}^{d}$ in weak$^{*}$ topology, see \cite{MR4156216} for more information.

	Then we define the noncommutative Riesz transforms in this setting. Let $\mathbf{x}=U(f)\in \mathcal{S}(\mathbb{R}_{\theta}^{d})$.  Since there are no underlying points,  it is convenient to understand the Riesz transform as the Fourier multiplier form.  Note that for $t\in\mathbb{R}^d$, $U(t)$ plays the role of $e^{2\pi \mathrm{i}\langle  t,\cdot \rangle}$ on $\mathbb{R}^d$. Then $\mathbf{x}=U(f)$ can be viewed as the inverse Fourier transform of $f$. Inspired by the classical formula: 
	$$	R_{j}\check{f}(x)=\int_{\mathbb{R}^d}f(\xi)e^{2\pi \mathrm{i} \langle x,\xi  \rangle}(-\mathrm{i})\frac{\xi_{j}}{|\xi|}d\xi,$$
	we define the $j$-th noncommutative Riesz transform of $\mathbf{x}=U(f)$ in the frequency space of $f$ with the    Fourier multiplier form, i.e., 
	\begin{align*}
		R_{j}\mathbf{x}=\int_{\mathbb{R}^d}f(\xi)U(\xi)(-\mathrm{i})\frac{\xi_{j}}{|\xi|}d\xi.
	\end{align*}
	Following  the same method, for any $\varepsilon>0$,   the $j$-th noncommutative truncated Riesz transform of $\mathbf{x}=U(f)$ is defined by 
	\begin{align*}
		R_{j}^{\varepsilon}\mathbf{x}=\int_{\mathbb{R}^d}f(\xi)U(\xi)(-\mathrm{i})\frac{\xi_{j}}{|\xi|}m(\varepsilon|\xi|)d\xi,
	\end{align*} 
	where we use the  associated multiplier   given  in (\ref{24329.4}).
	For   the sequence $(R_{j}^{\varepsilon}\mathbf{x})_{\varepsilon}$,  its    $L_{p}( {\mathbb{R}_{\theta}^{d}};l_{\infty}(\mathbb{R}_{+}))$ norm is   denoted by $\|\sup_{\varepsilon}R_{j}^{\varepsilon}\mathbf{x}\|_{L_{p}(\mathbb{R}_{\theta}^{d})}$. Similar to the quantum tori, $\sup_{\varepsilon}R_{j}^{\varepsilon}\mathbf{x}$ is just a notation and  does not make sense.

	Now, we  state our results on quantum Euclidean space,   the boundedness of    the maximal Riesz transforms   in terms of the Riesz transforms on $L_{p}(\mathbb{R}_{\theta}^{d})$.
	\begin{theorem}\label{24521.3}
		Let $1<p\leq\infty$ and $\mathbf{x}\in {L_{p}(\mathbb{R}^{d}_{\theta})}$. Then there exists a constant $C_{d,p}$ such that 
		\begin{align*}
			\|\sup_{\varepsilon>0}R_{j}^{\varepsilon}\mathbf{x}\|_{L_{p}(\mathbb{R}^{d}_{\theta})}\leq C_{d,p}\|R_{j}\mathbf{x}\|_{L_{p}(\mathbb{R}^{d}_{\theta})}.
		\end{align*}
	\end{theorem}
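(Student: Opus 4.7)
My plan is to reduce the inequality on $L_{p}(\mathbb{R}^{d}_{\theta})$ to its semi-commutative analogue via the noncommutative Calder\'{o}n transference principle \cite[Theorem 3.1]{MR4202493} (as applied in \cite{1}), and then establish the semi-commutative inequality by adapting the classical Mateu--Verdera argument \cite{MR2280788} to the operator-valued setting. Concretely, let $\mathcal{M}$ be a suitable auxiliary semifinite von Neumann algebra encoding the crossed-product structure of $\mathbb{R}^{d}_{\theta}$ with respect to its dual $\mathbb{R}^{d}$-action, and set $\mathcal{N}=L_{\infty}(\mathbb{R}^{d})\overline{\otimes}\mathcal{M}$. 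Under this action the Fourier multipliers $\xi\mapsto(-\mathrm{i})\xi_{j}/|\xi|$ and $\xi\mapsto(-\mathrm{i})(\xi_{j}/|\xi|)m(\varepsilon|\xi|)$ on $\mathbb{R}^{d}_{\theta}$ correspond to the same symbols acting on the commutative $\mathbb{R}^{d}$-variable of $\mathcal{N}$, and transference reduces the claim to a uniform-in-$\varepsilon$ bound on $L_{p}(\mathcal{N})$.

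In the semi-commutative setting the $\mathbb{R}^{d}$-variable is genuinely classical, so the pointwise identity underlying Mateu--Verdera transfers intact: with a suitable mollifier $\phi_{\varepsilon}$ one has $R_{j}^{\varepsilon}f=R_{j}(\phi_{\varepsilon}*f)+E_{\varepsilon}f$, where $\sup_{\varepsilon}|E_{\varepsilon}f(x)|$ is dominated pointwise by a Hardy--Littlewood-type maximal operator $M$ applied to $R_{j}f$. Applied fibrewise in the $\mathcal{M}$-variable, this yields
\begin{align*}
\|\sup_{\varepsilon>0}R_{j}^{\varepsilon}F\|_{L_{p}(\mathcal{N};l_{\infty}(\mathbb{R}_{+}))}\lesssim\|R_{j}F\|_{L_{p}(\mathcal{N})}+\|M(R_{j}F)\|_{L_{p}(\mathcal{N};l_{\infty}(\mathbb{R}_{+}))}.
\end{align*}
The first term is controlled by the $L_{p}(\mathcal{N})$-boundedness of $R_{j}$ (a semi-commutative Calder\'{o}n--Zygmund fact from \cite{MR4585152}), and the second by Mei's semi-commutative Hardy--Littlewood maximal inequality \cite{MR2327840}. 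Transferring back then produces Theorem \ref{24521.3} with a dimension-dependent constant $C_{d,p}$.

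The principal obstacle I anticipate is handling the supremum $\sup_{\varepsilon>0}$ in the $L_{p}(\mathcal{M};l_{\infty})$ sense consistently through the transference step. The Calder\'{o}n principle must be applied to the entire $\varepsilon$-indexed family as a single $l_{\infty}$-valued multiplier rather than to each truncation separately; this requires working in the enlarged algebra $\mathcal{M}\overline{\otimes}l_{\infty}(\mathbb{R}_{+})$ in the precise sense of \cite{MR1648908,MR2276775}, and verifying that the dual action extends trivially in the $\varepsilon$-direction so that Junge--Xu's framework for noncommutative maximal functions is respected on both sides. A secondary subtlety is the endpoint $p=\infty$: there the inequality must be interpreted on the dense Schwartz subspace $\mathcal{S}(\mathbb{R}^{d}_{\theta})$ and the $l_{\infty}$ target is just the operator-norm supremum, but the argument above still applies once the $L_{\infty}$ boundedness on the semi-commutative side is read off from a standard Cotlar-type pointwise bound.
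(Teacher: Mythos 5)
There is a genuine gap, and it lies in the order of your two key steps. You propose to apply the noncommutative Calder\'{o}n transference first, reducing the multiplier problem on $\mathbb{R}^d_\theta$ to a problem on $\mathcal{N}=L_\infty(\mathbb{R}^d)\overline\otimes\mathcal{M}$, and only then to run the Mateu--Verdera argument in the semi-commutative setting. But the transference principle of \cite[Theorem 3.1]{MR4202493} applies to families of \emph{averaging} operators of the form $\mathbf{x}\mapsto\int_G \psi\,\alpha_t(\mathbf{x})\,dt$ with $\psi\in L_1$; it does not transfer arbitrary Fourier multipliers. The truncated Riesz transforms $R_j^\varepsilon$ are not of this form (their kernels $K_j^\varepsilon$ are not integrable), and moreover the paper explicitly observes that $L_p(\mathbb{R}^d_\theta)$ does \emph{not} embed isometrically into $L_p(\mathbb{R}^d;L_p(\mathbb{R}^d_\theta))$ by any natural map --- so the semi-commutative picture you hope to install at the outset simply is not available. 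Your auxiliary $\mathcal{M}$ ``encoding the crossed-product structure'' is also problematic: the crossed product of $\mathbb{R}^d_\theta$ by the translation action is, by Stone--von Neumann, a type I factor, not a tensor product with $L_\infty(\mathbb{R}^d)$, so the purported $\mathcal{N}$ on which you intend to verify the classical symbols is not well-defined as stated.

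What makes the argument go through in the paper is precisely that the Mateu--Verdera lemma is invoked \emph{before} any transference, directly at the level of $\mathbb{R}^d_\theta$. Writing $K_j^1=R_jh$ and hence $m(\varepsilon|\xi|)=\widehat{h}(\varepsilon\xi)$, one obtains the operator identity
\begin{align*}
R_j^\varepsilon\mathbf{x}=\int_{\mathbb{R}^d} h(x)\,T_{-\varepsilon x}R_j\mathbf{x}\,dx,
\end{align*}
which exhibits the whole family $(R_j^\varepsilon\mathbf{x})_{\varepsilon>0}$ as translation averages of the single operator $R_j\mathbf{x}$. Only at this point is the operator in the shape required by the transference philosophy: splitting $h$ over $2B$ and dyadic annuli, averaging over a F\o lner set $F\subset\mathbb{R}^d$ (Lemma \ref{240610.2}), and applying the semi-commutative Hardy--Littlewood maximal inequality (Proposition \ref{241227.1}) on $L_p(\mathbb{R}^d;L_p(\mathbb{R}^d_\theta))$ then gives the bound by $\|R_j\mathbf{x}\|_{L_p(\mathbb{R}^d_\theta)}$. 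So you have identified the right two ingredients, but you must reverse their order: apply Mateu--Verdera on the quantum side to manufacture an averaging operator, and only then transfer.
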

	
	Similar to Theorem \ref{241117.3}, we also get the dimension-free result on quantum Euclidean space.

	\begin{theorem}\label{24728.1}
		Let  $2\leq p<\infty$ and $\mathbf{x}\in {L_{p}(\mathbb{R}^{d}_{\theta})}$. Then  we have 
		\begin{align*}
			\|\sup_{\varepsilon>0}R_{j}^{\varepsilon}\mathbf{x}\|_{L_{p}(\mathbb{R}^{d}_{\theta})}\leq (96+2\sqrt{2})^{\frac{2}{p}}\|R_{j}\mathbf{x}\|_{L_{p}(\mathbb{R}^{d}_{\theta})}.
		\end{align*}
	\end{theorem}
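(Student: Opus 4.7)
The plan is to deduce Theorem \ref{24728.1} from its quantum torus analogue, Theorem \ref{241117.3}, via a scaling and approximation argument that exhibits $L_p(\mathbb{R}^d_\theta)$ as a limit of $L_p(\mathbb{T}^d_{\theta_N})$. Concretely, for a Schwartz element $\mathbf{x}=U(f)$ with $f\in\mathcal{S}(\mathbb{R}^d)$ and large $N>0$, I would approximate the Bochner integral $\int f(t)U(t)\,dt$ by Riemann sums over the lattice $(N^{-1}\mathbb{Z})^d$ restricted to a fundamental cell of side $N$. After rescaling $\theta\mapsto \theta_N:=N^{-2}\theta$ so that the discrete Weyl relations on the lattice match those of $\mathbb{T}^d_{\theta_N}$, this Riemann sum is identified with an element $\mathbf{x}_N\in L_p(\mathbb{T}^d_{\theta_N})$ of a quantum torus whose effective size grows with $N$. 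By density of $\mathcal{S}(\mathbb{R}^d_\theta)$ in $L_p(\mathbb{R}^d_\theta)$, it suffices to prove the inequality for such Schwartz $\mathbf{x}$.

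The first step is to verify convergence of the fixed-truncation quantities: for each fixed $\varepsilon>0$,
\[
\|R_j^\varepsilon\mathbf{x}_N\|_{L_p(\mathbb{T}^d_{\theta_N})}\longrightarrow \|R_j^\varepsilon\mathbf{x}\|_{L_p(\mathbb{R}^d_\theta)},\qquad \|R_j\mathbf{x}_N\|_{L_p(\mathbb{T}^d_{\theta_N})}\longrightarrow \|R_j\mathbf{x}\|_{L_p(\mathbb{R}^d_\theta)}.
\]
On the frequency side, the torus multiplier $-\mathrm{i}n_j m(\varepsilon|n|)/|n|$ summed against $\widehat{\mathbf{x}_N}(n)\mathbf{V}^n$ is precisely a Riemann approximation of the integral defining $R_j^\varepsilon\mathbf{x}$. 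Smoothness of $m$ and Schwartz decay of $f$ yield uniform convergence of these Riemann sums, and a standard Plancherel-plus-interpolation argument (using convergence in $L_2$ via \eqref{241210.1} and in $L_\infty$) propagates the convergence to $L_p$.

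The delicate step is to transfer the supremum: one has to establish a lower semicontinuity of the form
\[
\|\sup_{\varepsilon>0}R_j^\varepsilon\mathbf{x}\|_{L_p(\mathbb{R}^d_\theta)}\le \liminf_{N\to\infty}\|\sup_{\varepsilon>0}R_j^\varepsilon\mathbf{x}_N\|_{L_p(\mathbb{T}^d_{\theta_N})}.
\]
Combining this with Theorem \ref{241117.3} applied to each $\mathbf{x}_N$, together with the convergence of $\|R_j\mathbf{x}_N\|$ just discussed, yields the bound with the dimension-free constant $(96+2\sqrt{2})^{2/p}$. I would establish the lower semicontinuity by unpacking the definition of $L_p(\mathcal{M};l_\infty)$: pick near-optimal factorizations $R_j^\varepsilon\mathbf{x}_N = a_N y_{N,\varepsilon} b_N$ on the torus, realize the outer $L_{2p}$-factors $a_N, b_N$ inside a common ambient algebra through the rescaling identification, extract weak-$\ast$ limits, and show that the limiting data furnishes an admissible factorization of $(R_j^\varepsilon\mathbf{x})_\varepsilon$ in which the essential bound $\sup_\varepsilon\|y_\varepsilon\|_\infty$ is controlled by a Fatou-type argument in the $l_\infty$ direction.

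The main obstacle is precisely this last transference of the $L_p(\mathcal{M};l_\infty)$ norm, since it is defined through an infimum over factorizations rather than a pointwise supremum and must be made compatible with the continuous parameter $\varepsilon\in(0,\infty)$. A clean way to organize the proof is to isolate a general transference lemma: for any uniformly bounded family of Fourier multipliers whose symbols are continuous in the frequency variable and in an external parameter, the $L_p(\mathbb{T}^d_{\theta_N};l_\infty)$ norms of their actions on $\mathbf{x}_N$ converge to the corresponding $L_p(\mathbb{R}^d_\theta;l_\infty)$ norm on $\mathbf{x}$. Once such a lemma is in place, Theorem \ref{241117.3} supplies the dimension-free constant and Theorem \ref{24728.1} follows immediately.
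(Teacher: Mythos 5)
Your strategy is genuinely different from the paper's, but as outlined it has gaps that are not cosmetic.

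The paper's proof works entirely inside $\mathbb{R}^d_\theta$ and never passes through quantum tori. It writes $A^t\mathbf{x}=\phi_t*_\theta\mathbf{x}$, splits $\phi_t$ by integration by parts, bounds the averaged part at $p=2$ using the amenability of $\mathbb{R}^d$ (Lemma \ref{240610.2}) and Calder\'on transference (Proposition \ref{24725.8}, Remark \ref{241226.4}, constant $96$), bounds the derivative part by a square function argument via Plancherel (Lemma \ref{241212.4}, constant $2\sqrt{2}$), then interpolates with the trivial $L_\infty$ bound, and finally uses the factorization $R_j^t=A^tR_j$. Your proposal instead tries to realize $L_p(\mathbb{R}^d_\theta)$ as a limit of $L_p(\mathbb{T}^d_{\theta_N})$ and push Theorem \ref{241117.3} through the limit.

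There are two serious problems. First, the identification of your Riemann sums with elements of $L_p(\mathbb{T}^d_{\theta_N})$ is not set up correctly. The algebra $\mathbb{T}^d_{\theta_N}$ carries a \emph{normalized} tracial state; there is no ``effective size'' parameter in its definition. Sending $\theta_N=N^{-2}\theta\to 0$ makes $\mathbb{T}^d_{\theta_N}$ converge (in any reasonable sense) to the commutative torus $L_\infty(\mathbb{T}^d)$, not to $\mathbb{R}^d_\theta$, whose trace is semifinite and unbounded. To encode ``a torus of side $N$'' you would have to rescale the trace by $N^d$, which changes all the $L_p$ norms and requires re-deriving Theorem \ref{241117.3} with the new normalization before you can quote it. Also, the $*$-subalgebra of $\mathbb{R}^d_\theta$ generated by the lattice operators $\{U(n/N)\}$ is a group von Neumann algebra of a discrete group sitting inside $\mathcal{B}(L_2(\mathbb{R}^d))$ in the continuous Weyl representation; it is not literally $\mathbb{T}^d_{\theta_N}$ in its GNS representation, and the canonical trace of $\mathbb{R}^d_\theta$ does not restrict to the tracial state $\tau_{\theta_N}$. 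None of this is addressed.

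Second, and you identify this yourself, the lower semicontinuity
\[
\Big\|\sup_{\varepsilon>0}R_j^{\varepsilon}\mathbf{x}\Big\|_{L_p(\mathbb{R}^d_\theta)}
\;\le\;
\liminf_{N\to\infty}\Big\|\sup_{\varepsilon>0}R_j^{\varepsilon}\mathbf{x}_N\Big\|_{L_p(\mathbb{T}^d_{\theta_N})}
\]
is the crux, and the sketch of its proof does not go through as written. The factorizations $R_j^\varepsilon\mathbf{x}_N=a_Ny_{N,\varepsilon}b_N$ live in pairwise distinct von Neumann algebras acting on distinct Hilbert spaces with distinct traces; there is no ``common ambient algebra through the rescaling identification'' in which weak-$*$ limits of $a_N,b_N$ can be taken, short of introducing an ultraproduct of the whole family, which is a substantial construction you have not invoked. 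Even granting a common ambient algebra, the factorization norm of $L_p(\mathcal{M};l_\infty(\mathbb{R}_+))$ is an \emph{infimum} over factorizations and involves a continuous index set; a ``Fatou-type argument in the $l_\infty$ direction'' for such a norm is not standard and needs to be supplied, not merely asserted. The paper sidesteps all of this by never leaving $\mathbb{R}^d_\theta$: the only approximation it needs is density of $\mathcal{S}(\mathbb{R}^d_\theta)$, and the only transference is Calder\'on's, which is an exact identity-with-error (via amenability) rather than a limit across a sequence of algebras.

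In short: the idea of deducing the Euclidean theorem from the torus theorem by a continuum limit is appealing, but the scheme as written does not produce a proof. You would need (i) a precise construction of $\mathbf{x}_N$ in a quantum torus equipped with a rescaled trace, together with a re-derived dimension-free constant for that normalization, and (ii) a rigorous lower-semicontinuity statement for the $L_p(\cdot;l_\infty)$ norm across the approximating algebras, likely via an ultraproduct. The paper's route through Calder\'on transference inside $\mathbb{R}^d_\theta$ is both shorter and avoids both obstacles.
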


	The rest of this paper is organized as follows.   In Section \ref{24521.1},   we 
	present some preliminaries and notions on  the noncommutative  $L_{p}$ space,   quantum tori, and quantum Euclidean space.  This section also introduces the transference method on quantum tori and  calculus on quantum Euclidean space. In Section \ref{241121.3}, we    investigate  the boundedness  of the maximal Riesz transforms in terms of the Riesz transforms on   quantum tori, via  the transference technique to study the corresponding one  in the setting of  tensor von Neumann algebra $ L_{\infty}(\mathbb{T}^d)\overline{\otimes}\mathcal{M}$ or $ L_{\infty}(\mathbb{R}^d)\overline{\otimes}\mathcal{M}$. Section \ref{24913.1} is   aim to investigate these results on quantum Euclidean space relying on  the noncommutative Calder\'{o}n's  transference principle (see \cite[Theorem 3.1]{MR4202493}).

	\vspace{0.2cm}
	
	\noindent\textbf{Notation.} Throughout this paper, we only consider the dimension $d\geq2$. The letter $C$ stands for a
	positive constant,  independent of the variables, not necessarily the same one in each occurrence. Notation $C_{p,d}$ means a positive constant depending on the parameters $p$ and $d$. $X\lesssim Y$  means $X\leq C Y$ for some positive constant $C$. For any $1\leq p\leq\infty$, we denote  $p'$ as the conjugate index of $p$, i.e., $\frac{1}{p}+\frac{1}{p'}=1$.

	\section{Preliminaries}\label{24521.1}
	\noindent\textbf{2.1. Noncommutative $L_{p}$ space.} Let $\mathcal{M}$ be a von Neumann algebra equipped with a normal semifinite faithful trace $\tau$. The support of $x\in \mathcal{M}$, denoted by  $s(x)$,  is the smallest projection $e$ satisfying $exe=x$. Let $\mathcal{S}_{+}(\mathcal{M})$ be the set of all positive elements $x$ in $\mathcal{M}$ with $\tau(s(x)) <\infty$. $\mathcal{S}(\mathcal{M})$ represents  the linear span of $\mathcal{S}_{+}(\mathcal{M})$. Given $1\leq p<\infty$, we
	define
	\begin{align*}
		\|x\|_{L_{p}(\mathcal{M})}=(\tau(|x|^p))^{\frac{1}{p}},~~x\in \mathcal{S}(\mathcal{M}),
	\end{align*}
	where $|x|=(x^{*}x)^{\frac{1}{2}}$ is the modulus of $x$. Then $(\mathcal{S}(\mathcal{M}),\|\cdot\|_{L_{p}(\mathcal{M})})$ is a normed space, whose
	completion is called as the noncommutative $L_{p}$ space associated with $(\mathcal{M}, \tau )$, denoted  
	by $L_{p}(\mathcal{M})$. For convenience, we define $L_{\infty}(\mathcal{M})=\mathcal{M}$ equipped with the operator norm $\|\cdot\|_{\infty}$. Let $L_{p}(\mathcal{M})_{+}$ denote the positive part of $L_{p}(\mathcal{M})$.

	We introduce the semi-commutative space  $L_p(\mathcal{ {N}})$   associated with  the tensor von Neumann algebra $  \mathcal{ {N}}=L_{\infty}(\mathbb{R}^d)\overline{\otimes}\mathcal{M}$ equipped with its tensor trace $ \int_{\mathbb{R}^d}dx\otimes\tau$, which is  frequently used in Section \ref{241121.3}. Notice that   $L_p(\mathcal{ {N}})$ isometrically coincides with $L_p(\mathbb{R}^d;L_{p}(\mathcal{ {M}}))$, the usual $L_{p}$ space of $p$-integrable functions from $\mathbb{R}^d$ to $L_{p}(\mathcal{ {M}})$. Let $1<p<\infty$,  the following operator-valued version of the H\"{o}lder inequality (see \cite[Lemma 2.4]{hong2022noncommutative}) will be  frequently used,  
	\begin{align}\label{3802}
		\int_{\mathbb{R}^d}f(x)g(x)dx\leq \bigg(\int_{\mathbb{R}^d}f(x)^{p}dx\bigg)^{\frac{1}{p}}\bigg(\int_{\mathbb{R}^d}g(x)^{p'}dx\bigg)^{\frac{1}{p'}},
	\end{align}
	where $f:\mathbb{R}^d\rightarrow L_{1}(\mathcal{M})+L_{\infty}(\mathcal{M})$  and $g:\mathbb{R}^d\rightarrow \mathbb{C}$ are positive functions  such that  all the integrals in (\ref{3802}) make sense. Here $\leq$ is understood as the partial order in the positive cone of $\mathcal{ {M}}$. The operator-valued
	version of the Plancherel formula is also essential for us. For sufficiently nice function $f:\mathbb{R}^d\rightarrow L_{1}(\mathcal{M})+L_{\infty}(\mathcal{M})$, such as $f\in L_{\infty}(\mathbb{R}^d)\otimes L_{2}(\mathcal{M})$, we have
	\begin{align*}
		\int_{\mathbb{R}^d}|f(x)|^2dx=	\int_{\mathbb{R}^d}|\widehat{f}(\xi)|^2d\xi.
	\end{align*}
	
	\noindent\textbf{2.2. Noncommutative maximal function.} As mentioned in the introduction, a fundamental object  of this paper is the noncommutative maximal functions. We adopt its definition   introduced by  Pisier \cite{MR1648908}  and   Junge  \cite{MR1916654}.
	\begin{definition}\label{24926.1}
		Given $1\leq p\leq\infty$, $L_{p}(\mathcal{M};\,l_{\infty})$ is the space of all sequences $x=(x_{n})_{n\in\mathbb{Z}}$ in $L_{p}(\mathcal{M})$ which admit factorizations of the following form: there are $a,b\in L_{2p}(\mathcal{M}) $ and a bounded sequence $y=(y_{n})_{n\in\mathbb{Z}}$ in $L_{\infty}(\mathcal{M})$ such that $x_{n}=ay_{n}b$ for all $n\in\mathbb{Z}$. The norm of $x$ in $L_{p}(\mathcal{M};\,l_{\infty})$ is given by
		\begin{align*}
			\|x\|_{L_{p}(\mathcal{M};\,l_{\infty})}=\inf\left\{\|a\|_{L_{2p}(\mathcal{M})}\sup_{ n\in\mathbb{Z}}\|y_{n}\|_{\infty}\|b\|_{L_{2p}(\mathcal{M})}\right\},
		\end{align*}
		where the infimum is taken over all factorizations of $x= (x_{n})_{n\in\mathbb{Z}}=(ay_{n}b)_{n\in\mathbb{Z}}$ as above.
	\end{definition}
	
	To understand above definition well, let $x=(x_{n})_{n\in\mathbb{Z}}$ be a sequence of selfadjoint operators in $L_{p}(\mathcal{M})$. It was shown in \cite[Remark 4]{MR3079331} that $x\in L_{p}(\mathcal{M};\,l_{\infty})$
	if and only if there is a positive operator $a\in L_{p}(\mathcal{M})$ such that $-a \leq  x_{n}\leq a $  for all $n\in\mathbb{Z}$, and moreover,
	\begin{align}\label{241014.5}
		\|x\|_{L_{p}(\mathcal{M};\,l_{\infty})}=\inf\left\{\|a\|_{L_{p}(\mathcal{M})}:~a\in L_{p}(\mathcal{M})_{+},~-a \leq  x_{n}\leq a, ~\forall \,n\in\mathbb{Z} \right\}.
	\end{align}
	
	More generally, if  $\Lambda$  is an arbitrary index set,   $L_{p}(\mathcal{M};\,l_{\infty}(\Lambda))$ is defined by the space of all sequences $x=(x_{\lambda})_{\lambda\in \Lambda}$ in $L_{p}(\mathcal{M})$ which admit   factorizations of the following form: there are $a,b\in L_{2p}(\mathcal{M}) $ and a bounded sequence $y=(y_{\lambda})_{\lambda\in \Lambda}\subset L_{\infty}(\mathcal{M})$ such that $x_{\lambda}=ay_{\lambda}b$ for all $\lambda\in \Lambda$. The norm of $x$ in $L_{p}(\mathcal{M};\,l_{\infty}(\Lambda))$ is given by
	\begin{align*}
		\|x\|_{L_{p}(\mathcal{M};\,l_{\infty}(\Lambda))}=\inf_{x_{\lambda}=ay_{\lambda}b}\left\{\|a\|_{L_{2p}(\mathcal{M})}\sup_{ \lambda\in \Lambda}\|y_{\lambda}\|_{\infty}\|b\|_{L_{2p}(\mathcal{M})}\right\}.
	\end{align*} 
	If $x=(x_{\lambda})_{\lambda\in \Lambda}$ is a sequence of selfadjoint operators in $L_{p}(\mathcal{M})$, then $\|x\|_{L_{p}(\mathcal{M};\,l_{\infty}(\Lambda))}$ has the similar property as (\ref{241014.5}), i.e.,
	\begin{align}\label{24829.2}
		\|x\|_{L_{p}(\mathcal{M};\,l_{\infty}(\Lambda))}=\inf\left\{\|a\|_{L_{p}(\mathcal{M})}:~a\in L_{p}(\mathcal{M})_{+},~\-a \leq x_{\lambda}\leq a, ~\forall \,\lambda\in \Lambda \right\}.
	\end{align}
	The norm $\|x\|_{L_{p}(\mathcal{M};\,l_{\infty}(\Lambda))}$ is conventionally denoted by $\|\sup_{\lambda\in \Lambda} x_{\lambda}\|_{L_{p}(\mathcal{M})}$.
	It was shown in \cite[Proposition 2.1]{MR2276775} that 	a sequence $x=(x_{\lambda})_{\lambda\in \Lambda}$ in $L_{p}(\mathcal{M})$ belongs to $L_{p}(\mathcal{M};\,l_{\infty}(\Lambda))$ if and only if
	\begin{align*}
		\sup_{J\subset \Lambda,~J \mbox{ finite}}\|\sup_{j\in J}x_{j}\|_{L_{p}(\mathcal{M})}<\infty,
	\end{align*}
	and in this case,
	\begin{align}\label{0118.1}
		\|\sup_{\lambda\in \Lambda}x_{\lambda}\|_{L_{p}(\mathcal{M})}=\sup_{J\subset \Lambda, ~J \mbox{ finite}}\|\sup_{j\in J}x_{j}\|_{L_{p}(\mathcal{M})}.
	\end{align}
	
	Now, we are able to introduce the noncommutative Hardy-Littlewood maximal inequality mentioned in  the introduction  more specifically. Recall that the Hardy-Littlewood averaging operator $M_{t}f$ is defined in (\ref{241227.2}).
	\begin{proposition}\label{241227.1}
		Let $1<p\leq\infty$ and $f\in L_{p}(\mathcal{N})$ be positive. Then, there exist a   positive operator-valued function $G\in L_{p}(\mathcal{N})$ such that 
		\begin{align*} 
			M_{t}f(x)\leq G(x),~\forall t>0, \quad\mbox{with}\quad  \|G\|_{ L_{p}(\mathcal{N})}\leq C_{d,p}\|f\|_{L_{p}(\mathcal{N})}.
		\end{align*}
	\end{proposition}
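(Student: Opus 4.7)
The plan is to deduce Proposition \ref{241227.1} from Junge's noncommutative Doob maximal inequality \cite{MR1916654} applied to a finite family of shifted dyadic filtrations on $\mathbb{R}^d$, the comparison between the continuous averages $M_t$ and the resulting dyadic conditional expectations being supplied by the one-third trick. The case $p=\infty$ is immediate from $\|M_tf\|_\infty\leq\|f\|_\infty$, so I focus on $1<p<\infty$ with $f\in L_p(\mathcal{N})_+$.

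For each $\alpha\in\{0,\tfrac13,\tfrac23\}^d$ and $k\in\mathbb{Z}$, I let $\mathcal{D}^\alpha_k$ be the partition of $\mathbb{R}^d$ into half-open cubes of sidelength $2^k$ built on the lattice $2^k(\mathbb{Z}^d+\alpha)$, write $Q^\alpha_k(x)$ for the unique cube of $\mathcal{D}^\alpha_k$ containing $x$, and let $\mathbb{E}^\alpha_k$ denote the trace-preserving conditional expectation on $\mathcal{N}=L_\infty(\mathbb{R}^d)\overline{\otimes}\mathcal{M}$ given by
\begin{align*}
(\mathbb{E}^\alpha_k f)(x)=\frac{1}{2^{dk}}\int_{Q^\alpha_k(x)}f(y)\,dy.
\end{align*}
For each fixed $\alpha$, $(\mathbb{E}^\alpha_k)_{k\in\mathbb{Z}}$ is a filtration on $(\mathcal{N},\int\otimes\tau)$, and Junge's noncommutative Doob inequality produces a positive $G^\alpha\in L_p(\mathcal{N})$ with $\mathbb{E}^\alpha_k f\leq G^\alpha$ for every $k\in\mathbb{Z}$ and $\|G^\alpha\|_{L_p(\mathcal{N})}\leq c_p\|f\|_{L_p(\mathcal{N})}$. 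The one-third trick then provides, for every $(x,t)$, a shift $\alpha(x,t)$ and an index $k(x,t)$ with $2^{k(x,t)}\leq C_d t$ and $B(x,t)\subset Q^{\alpha(x,t)}_{k(x,t)}(x)$; since $f\geq 0$ and $|Q^\alpha_k(x)|\leq C_d|B(x,t)|$, this yields
\begin{align*}
M_tf(x)\leq C_d\,\mathbb{E}^{\alpha(x,t)}_{k(x,t)}f(x)\leq C_d\sum_{\alpha\in\{0,\frac13,\frac23\}^d}G^\alpha(x)
\end{align*}
as an operator inequality in the positive cone of $\mathcal{M}$ uniform in $x$ and $t$. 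Setting $G:=C_d\sum_{\alpha}G^\alpha$ furnishes the required dominating element, with $\|G\|_{L_p(\mathcal{N})}\leq 3^d C_d c_p\|f\|_{L_p(\mathcal{N})}$, which is Proposition \ref{241227.1} in the sense of \eqref{24829.2}.

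The main obstacle is the operator-theoretic interpretation of the one-third comparison: the inequality must hold in the positive cone of $\mathcal{M}$ pointwise in $x\in\mathbb{R}^d$ and \emph{uniformly in} $t>0$, so that a single $G$ dominates the entire family $(M_tf)_{t>0}$, as demanded by the definition of the $L_p(\mathcal{N};l_\infty(\mathbb{R}_+))$ norm. Positivity of $f$ trivialises the enlargement of the integration region, and the parameter dependence $(\alpha(x,t),k(x,t))$ is absorbed by summing over the finitely many shifts and using the Doob bound $\mathbb{E}^\alpha_kf\leq G^\alpha$ uniformly in $k$. A minor technical point is the passage to the bi-infinite filtration indexed by $k\in\mathbb{Z}$, which is handled by noting that $\mathbb{E}^\alpha_kf\to 0$ in $L_p$ as $k\to-\infty$ when $p<\infty$, so that Doob's theorem reduces to its standard one-sided form after truncation and a Fatou-type limit.
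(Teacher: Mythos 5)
The paper does not supply a proof of this proposition: it is quoted as a known noncommutative Hardy--Littlewood maximal inequality (the one-dimensional case is attributed in the introduction to Mei \cite{MR2327840}, proved via Junge's Doob maximal inequality applied to shifted dyadic filtrations), so your overall strategy of combining Doob with a one-third-type covering is the expected one. However, your implementation of the one-third trick is flawed. With the \emph{static} shift $\alpha\in\{0,\tfrac13,\tfrac23\}^d$ you describe, the grids $\mathcal{D}^\alpha_k$ built on the lattices $2^k(\mathbb{Z}^d+\alpha)$ do \emph{not} nest as $k$ varies: the coarse lattice $2^{k+1}(\mathbb{Z}^d+\alpha)$ is contained in the fine lattice $2^k(\mathbb{Z}^d+\alpha)$ if and only if $2^{k+1}\alpha-2^k\alpha=2^k\alpha\in2^k\mathbb{Z}^d$, i.e.\ $\alpha\in\mathbb{Z}^d$; so for $\alpha=\tfrac13$ or $\tfrac23$ a cube of $\mathcal{D}^\alpha_{k+1}$ is not a union of cubes of $\mathcal{D}^\alpha_k$, and $(\mathbb{E}^\alpha_k)_{k\in\mathbb{Z}}$ is not a (reverse) filtration but merely an uncoupled family of conditional expectations. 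Junge's noncommutative Doob maximal inequality requires a monotone family of conditional expectations, so it does not produce your dominating element $G^\alpha$; indeed, for a non-nested family a single dominating $a$ with the right norm need not exist. The correct dyadic one-third construction lets the translation depend on the scale: e.g.\ in one dimension one uses grids with base points $2^{-j}\bigl(\mathbb{Z}+(-1)^j\beta\bigr)$, $\beta\in\{0,\tfrac13\}$, whose alternating shift is a two-cycle of the doubling map and therefore yields genuinely nested $\sigma$-algebras; in $d$ dimensions one takes products. With these in place, your argument goes through essentially verbatim.

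A secondary, minor slip: you assert $\mathbb{E}^\alpha_k f\to 0$ in $L_p$ as $k\to-\infty$. The correct direction (with your convention that $2^k$ is the sidelength) is $k\to+\infty$, where averages over ever larger cubes tend to zero in $L_p$ for $p<\infty$; as $k\to-\infty$ one has $\mathbb{E}^\alpha_k f\to f$ by martingale convergence. This does not affect the scheme but should be corrected in the truncation/Fatou step where you pass from a one-sided to a bi-infinite filtration.
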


	\noindent\textbf{2.3. Quantum tori $\mathbb{T}_{\theta}^{d}$.} Suppose that $d\geq2$, and $\theta=(\theta_{k,l})_{1\leq k,l\leq d}$ is a real   antisymmetric   $d\times d$ matrix. The  $d$-dimensional noncommutative torus $\mathcal{A}_{\theta }$ is the   universal $C^{*}$-algebra generated by $d$ unitary operators $V_{1},\cdots,V_{d}$, which  satisfy 
	\begin{align}\label{241218.1}
		V_{k}V_{l}=e^{2\pi\mathrm{i}\theta_{k,l}}V_{l}V_{k}, ~~~1\leq k,l\leq d.
	\end{align}
	Given $n=(n_{1},\cdots,n_{d})\in\mathbb{Z}^d$ and $\mathbf{V}=(V_{1},\cdots,V_{d})$,  we adopt the  notation  $\mathbf{V}^{n}=V_{1}^{n_{1}}\cdots V_{d}^{n_{d}}$. The polynomial in $\mathbf{V}$ is the element 
	\begin{align}\label{241112.8}
		\mathbf{x}=\sum_{n\in\mathbb{Z}^d}\alpha_{n}\mathbf{V}^{n}, ~~\alpha_{n}\in\mathbb{C},
	\end{align}
	with a finite sum, i.e.,   $\alpha_{n}=0$ for all but finite indices $n\in\mathbb{Z}^d$.  The involution algebra of all such polynomials, denoted by $\mathcal{P}_{\theta}$, is dense in $\mathcal{A}_{\theta }$. For  any polynomial  $\mathbf{x}$ with   form (\ref{241112.8}), we define 
	\begin{align*}
		{\tau}_{\theta }(\mathbf{x})=\alpha_{\mathbf{0}},
	\end{align*}
	where $\mathbf{0}=(0,\cdots,0)$.  Then $\tau_{\theta}$   is extended to a     faithful tracial state on $\mathcal{A}_{\theta}$.   The $d$-dimensional quantum torus, denoted by  $\mathbb{T}_{\theta }^{d}$, is the weak $*$-closure of $\mathcal{A}_{\theta }$ in the GNS representation of $\tau_{\theta}$.   The state $\tau_{\theta}$    extends to a normal  faithful tracial state on $\mathbb{T}_{\theta }^{d}$. Let $L_{p}(\mathbb{T}^d_{\theta })$ be the  noncommutative $L_{p}$ space associated with pairs $(\mathbb{T}_{\theta}^{d}, \tau_{\theta})$. 
	
	\vspace{0.2cm}
	
	\noindent\textbf{Transference on $\mathbb{T}_{\theta}^{d}$.}  Recall   that  the classical $d$-dimensional tours   $\mathbb{T}^d$ is the cube $[0,1]^d$ with opposite sides identified. It is equipped with the Haar measure, denoted by $dx$. 
	
	Consider  the  von Neumann algebra $\mathcal{N}_{\theta}=L_{\infty}(\mathbb{T}^d)\overline{\otimes}\mathbb{T}^d_{\theta}$ equipped with the tensor trace $\int_{\mathbb{T}^d}dx\otimes{\tau}_{\theta}$. Let $L_{p}(\mathcal{N}_{\theta})$ be the noncommutative $L_{p}$ space associated with pairs $(\mathcal{N}_{\theta},\int_{\mathbb{T}^d}dx\otimes{\tau}_{\theta})$. It is well-known that  $L_{p}(\mathcal{N}_{\theta})$ isometrically coincides with $L_p(\mathbb{T}^d; L_{p}(\mathbb{T}^d_{\theta}))$, the usual $L_{p}$ space of $p$-integrable functions from $\mathbb{T}^d$ to $L_{p}(\mathbb{T}^d_{\theta})$.

	For every $z\in \mathbb{T}^d$,  we define $W_{z}$ by
	\begin{align*}
		W_{z}(\mathbf{V}^{n})=e^{2\pi\mathrm{i} \langle  z,n \rangle}\mathbf{V}^{n},
	\end{align*}
	which is  a positive isometry on $L_{p}(\mathbb{T}^d_{\theta})$. On the one hand, $W_{z}$ is a  homomorphism of $\mathbb{T}^d_{\theta}$, which implies that for any positive element $\mathbf{x}^{*}\mathbf{x}$, we have
	\begin{align*}
		W_{z}(\mathbf{x}^{*}\mathbf{x})=W_{z}(\mathbf{x}){^*}W_{z}(\mathbf{x})\geq0.
	\end{align*}
	Indeed,  it is clear  that $W_{z}$ is a linear  multiplicative map, which reduces to  check it preserves involution. Using the commutation relation (\ref{241218.1}), we obtain 
	\begin{align*}
		(\mathbf{V}^{n})^{*}=(V_{d}^{-1})^{n_{d}}\cdots(V_{1}^{-1})^{n_{1}}=C_{\theta,n}\mathbf{V}^{-n},
	\end{align*}
	where $C_{\theta,n}=e^{-2\pi\mathrm{i}\sum_{1\leq k<l\leq d  }\theta_{k,l}n_{k}n_{l}}$. Thus, 
	\begin{align*}
		W_{z}((\mathbf{V}^{n})^{*})=C_{\theta,n}e^{-2\pi\mathrm{i} \langle  z,n \rangle}\mathbf{V}^{-n}=(W_{z}(\mathbf{V}^{n}))^{*}.
	\end{align*}
	On other hand,  $W_{z}$ is trace preserving.  Therefore, it   extends to an isometry on $L_{p}(\mathbb{T}^d_{\theta})$ for $1<p<\infty$, i.e.,
	\begin{align}\label{241112.7}
		\|\mathbf{x}\|_{L_{p}(\mathbb{T}^d_{\theta})}=\|W_{z}\mathbf{x}\|_{L_{p}(\mathbb{T}^d_{\theta})}.
	\end{align}

	The following  transference principle is essential  for us  and has been proved in \cite[Proposition 2.1]{ MR3079331}.
	\begin{proposition}\label{241126.1}
		For any $\mathbf{x}\in L_{p}(\mathbb{T}^d_{\theta})$, the operator-valued function $\tilde{\mathbf{x}} : z\mapsto W_{z}(\mathbf{x}) $  
		is continuous from $\mathbb{T}^d$ 
		to $L_{p}(\mathbb{T}^d_{\theta})$ $($with respect to the weak $*$-topology for $p=\infty$$)$.\,\,Moreover, $\mathbf{x}\rightarrow \tilde{\mathbf{x}}$ is an isometric embedding from $L_{p}(\mathbb{T}^d_{\theta})$ to 
		$L_{p}(\mathcal{N}_{\theta})$.
	\end{proposition}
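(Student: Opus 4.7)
The plan is to handle the two assertions separately, establishing continuity first so that the integral defining the $L_p(\mathcal{N}_\theta)$-norm of $\tilde{\mathbf{x}}$ is well-posed, and then verifying the isometry by a direct integration over $\mathbb{T}^d$ using the pointwise isometry (\ref{241112.7}).

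For the continuity statement, I would start from the dense subalgebra $\mathcal{P}_\theta$. If $\mathbf{x}=\sum_{n\in F}\alpha_{n}\mathbf{V}^{n}$ is a polynomial (with $F\subset\mathbb{Z}^{d}$ finite), then by definition $W_{z}\mathbf{x}=\sum_{n\in F}\alpha_{n}e^{2\pi\mathrm{i}\langle z,n\rangle}\mathbf{V}^{n}$, which depends continuously on $z\in\mathbb{T}^{d}$ in every $L_{p}(\mathbb{T}^{d}_{\theta})$-norm (and in the weak$^{*}$ topology when $p=\infty$), since there are only finitely many terms. For a general $\mathbf{x}\in L_{p}(\mathbb{T}^{d}_{\theta})$ with $1\leq p<\infty$, I would fix $z_{0}\in\mathbb{T}^{d}$ and $\varepsilon>0$, approximate $\mathbf{x}$ by a polynomial $\mathbf{y}\in\mathcal{P}_{\theta}$ with $\|\mathbf{x}-\mathbf{y}\|_{L_{p}(\mathbb{T}^{d}_{\theta})}<\varepsilon/3$, and then use the triangle inequality together with the pointwise isometry (\ref{241112.7}) to conclude
\[
\|W_{z}\mathbf{x}-W_{z_{0}}\mathbf{x}\|_{L_{p}(\mathbb{T}^{d}_{\theta})}\leq 2\|\mathbf{x}-\mathbf{y}\|_{L_{p}(\mathbb{T}^{d}_{\theta})}+\|W_{z}\mathbf{y}-W_{z_{0}}\mathbf{y}\|_{L_{p}(\mathbb{T}^{d}_{\theta})},
\]
where the last term tends to zero as $z\to z_{0}$ by the polynomial case.

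For the case $p=\infty$, I would test against $L_{1}(\mathbb{T}^{d}_{\theta})$: it suffices to verify that $z\mapsto\tau_{\theta}(\phi\, W_{z}\mathbf{x})$ is continuous for each $\phi\in L_{1}(\mathbb{T}^{d}_{\theta})$. Again by density of $\mathcal{P}_{\theta}$ in $L_{1}(\mathbb{T}^{d}_{\theta})$, one reduces to $\phi\in\mathcal{P}_{\theta}$, in which case the trace pairs only with finitely many Fourier modes of $\mathbf{x}$ and continuity follows from the continuity of the exponentials $e^{2\pi\mathrm{i}\langle z,n\rangle}$. A $3\varepsilon$-argument using the uniform bound $|\tau_{\theta}(\phi\, W_{z}\mathbf{x})|\leq\|\phi\|_{L_{1}(\mathbb{T}^{d}_{\theta})}\|\mathbf{x}\|_{\infty}$ (from $\|W_{z}\mathbf{x}\|_{\infty}=\|\mathbf{x}\|_{\infty}$) promotes this to arbitrary $\phi\in L_{1}(\mathbb{T}^{d}_{\theta})$.

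Once continuity is in hand, $\tilde{\mathbf{x}}:\mathbb{T}^{d}\to L_{p}(\mathbb{T}^{d}_{\theta})$ is strongly (resp.\ weak$^{*}$) measurable, so it defines an element of $L_{p}(\mathcal{N}_{\theta})\cong L_{p}(\mathbb{T}^{d};L_{p}(\mathbb{T}^{d}_{\theta}))$. The isometric embedding is then immediate: for $1\leq p<\infty$,
\[
\|\tilde{\mathbf{x}}\|_{L_{p}(\mathcal{N}_{\theta})}^{p}=\int_{\mathbb{T}^{d}}\tau_{\theta}\bigl(|W_{z}\mathbf{x}|^{p}\bigr)\,dz=\int_{\mathbb{T}^{d}}\|W_{z}\mathbf{x}\|_{L_{p}(\mathbb{T}^{d}_{\theta})}^{p}\,dz=\|\mathbf{x}\|_{L_{p}(\mathbb{T}^{d}_{\theta})}^{p},
\]
by the pointwise isometry (\ref{241112.7}); the case $p=\infty$ is identical with the essential supremum in $z$ in place of the integral. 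The main obstacle is the weak$^{*}$-continuity for $p=\infty$, where one cannot use the isometry of $W_{z}$ directly in a norm-approximation argument and must instead rely on the density of polynomials in $L_{1}(\mathbb{T}^{d}_{\theta})$ and uniform boundedness in the $L_{\infty}$-norm to transfer continuity from polynomial test functionals to arbitrary ones.
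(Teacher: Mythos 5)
Your argument is correct and follows essentially the same route as the cited proof of this fact (Chen–Xu–Yin, \emph{Comm.~Math.~Phys.} 322 (2013), Proposition~2.1): establish continuity on the dense polynomial subalgebra $\mathcal{P}_{\theta}$, extend by a $3\varepsilon$-argument using that $W_{z}$ is a linear isometry (resp.\ test against the predual $L_{1}(\mathbb{T}^{d}_{\theta})$ when $p=\infty$), and then obtain the isometric embedding by integrating the pointwise isometry \eqref{241112.7} over $\mathbb{T}^{d}$ under the identification $L_{p}(\mathcal{N}_{\theta})\cong L_{p}(\mathbb{T}^{d};L_{p}(\mathbb{T}^{d}_{\theta}))$. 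No gaps.
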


	\noindent\textbf{2.4. Quantum Euclidean space $\mathbb{R}_{\theta}^{d}$.} Let $d\geq 2$, and $\theta$ be a fixed antisymmetric $d\times d$ matrix.  $\mathbb{R}_{\theta}^{d}$ is a von Neumann algebra of $\mathcal{B}(L_{2}(\mathbb{R}^d))$ generated by   unitary operators $(U(t))_{t\in\mathbb{R}^d}$, which  act  on   $L_{2}(\mathbb{R}^d)$ as follows: 
	\begin{align*}
		(U(t)f)(r)=e^{-\frac{\mathrm{i}}{2}\langle t,\theta r  \rangle}f(r-t),~~f\in L_{2}(\mathbb{R}^d),~~r\in\mathbb{R}^d.
	\end{align*}

	Given $f\in L_{1}(\mathbb{R}^d)$,   we construct an element $U(f)\in \mathbb{R}_{\theta}^{d}$  as
	\begin{align*}
		U(f)g=\int_{\mathbb{R}^d}f(t)U(t)gdt,~~g\in L_{2}(\mathbb{R}^d).
	\end{align*}
	Note that the above $L_{2}(\mathbb{R}^d)$-valued  integral is absolutely convergent in the Bochner sense. The  triangle inequality in the Bochner sense implies that $U$ is bounded from $L_{1}(\mathbb{R}^d)$ to $ L_{\infty}(\mathbb{R}_{\theta}^{d})$.
	Furthermore,  it was shown in \cite[Proposition 2.8]{MR4156216} that, for $f\in L_{2}(\mathbb{R}^{d})$, the  plancherel formula holds, i.e., 
	\begin{align}\label{241225.2}
		\|U(f)\|_{L_{2}(\mathbb{R}^{d}_{\theta})}=\|f\|_{L_{2}(\mathbb{R}^{d})}.  
	\end{align}
	Then, the quantum   analogue of the Hausdorff-Young inequality  is an immediate consequence  of complex interpolation for the couples  $(L_{1}(\mathbb{R}^d),L_{2}(\mathbb{R}^d))$ and $(\mathbb{R}_{\theta}^{d},L_{2}(\mathbb{R}_{\theta}^{d}))$. Thus, $U$ can be viewed as the inverse Fourier transform on the quantum Euclidean space.

	In particular, we  denote Schwartz class in $\mathbb{R}_{\theta}^{d}$ as  the image of $U$ under the Schwartz  function,  i.e., 
	\begin{align*}
		\mathcal{S}(\mathbb{R}_{\theta}^{d})=\left\{\mathbf{x} \in \mathbb{R}_{\theta}^{d}:\mathbf{x}=U(f)=  \int_{\mathbb{R}^d}f(t)U(t)dt,~\mbox{for some}~f\in \mathcal{S}(\mathbb{R}^d)\right\}. 
	\end{align*}
	In this case, $U: \mathcal{S}(\mathbb{R}^d)\rightarrow \mathcal{S}(\mathbb{R}_{\theta}^{d})$ is a bijection, thus  $\mathcal{S}(\mathbb{R}_{\theta}^{d})$
	is equipped with a Fr\'{e}chet topology  induced by $U$ from $\mathcal{S}(\mathbb{R}^d)$, which admits its own canonical  Fr\'{e}chet topology.

	Let $\mathbf{x}\in \mathcal{S}(\mathbb{R}_{\theta}^{d})$ given by $\mathbf{x}=U(f)$, we define 
	\begin{align*}
		\tau_{\theta}(\mathbf{x})=f(0).
	\end{align*}
	Then $\tau_{\theta}$ can be extended to a normal semifinite faithful trace on $\mathbb{R}_{\theta}^{d}$.  The  noncommutative $L_{p}$ space
	$L_{p}(\mathbb{R}_{\theta}^{d})$ for $1\leq p<\infty$ is the completion of $N_{p}=\left\{\mathbf{x}\in \mathbb{R}_{\theta}^{d}: \tau_{\theta}(|\mathbf{x}|^p)<\infty \right\} $ with the norm   $\|\mathbf{x}\|_{L_{p}(\mathbb{R}_{\theta}^{d})}=(\tau_{\theta}(|\mathbf{x}|^p))^{\frac{1}{p}}$.
	Similar to the classical case,   $\mathcal{S}(\mathbb{R}_{\theta}^{d})$ is dense in  $L_{p}(\mathbb{R}_{\theta}^{d})$ for $1\leq p<\infty$ with respect to the norm, and dense in $\mathbb{R}_{\theta}^{d}$ in the weak$^*$ topology (see  \cite{MR4156216}).  
	
	\vspace{0.2cm}
	
	\noindent\textbf{Calculus on $\mathbb{R}_{\theta}^{d}$.}
	Let us first recall differential structure on $\mathbb{R}_{\theta}^{d}$. For $1\leq k\leq d$,  $\mathcal{D}_{k}$ is defined as the multiplication operator:
	\begin{align*}
		(\mathcal{D}_{k}g)(r)=2\pi r_{k}g(r),~r\in\mathbb{R}^d,
	\end{align*}
	acting on the domain $\mbox{dom}\mathcal{D}_{k}=\left\{g\in L_{2}(\mathbb{R}^d):g\in L_{2}(\mathbb{R}^d,r_{k}^2dr)\right\}$. Fixing $s\in\mathbb{R}^d$, the unitary generator $U(s)$ preserves $\mbox{dom} \mathcal{D}_{k} $ and following  formulas hold:
	\begin{align*}
		[\mathcal{D}_{k}, U(s)]=2\pi s_{k}U(s), \mbox{ and } e^{\mathrm{i}t\mathcal{D}_{k}}U(s)e^{-\mathrm{i}t\mathcal{D}_{k}}=e^{2\pi\mathrm{i}ts_{k}}U(s)\in\mathbb{R}_{\theta}^{d},\quad t>0.
	\end{align*}
	For general $x\in \mathbb{R}_{\theta}^{d}$, it was shown in \cite[Proposition 6.12]{MR4008371}   that if the commutator $[\mathcal{D}_{k},x]$   extends  to a bounded operator on $L_{2}(\mathbb{R}^d)$,  it can be written as
	\begin{align*}
		[\mathcal{D}_{k},x]=\lim_{t\rightarrow0}\frac{e^{\mathrm{i}t\mathcal{D}_{k}}xe^{-\mathrm{i}t\mathcal{D}_{k}}-x}{\mathrm{i}t}, 
	\end{align*}
	with respect to the strong operator topology. Then  $[\mathcal{D}_{k},x]\in \mathbb{R}_{\theta}^{d}$ is defined as the partial derivative $\partial_{k}x$ of $x\in \mathbb{R}_{\theta}^{d}$.
	The  gradient   $\nabla_{\theta}=(-\mathrm{i} \mathcal{D}_{1},\cdots, -\mathrm{i} \mathcal{D}_{d}) $  is the operator
	defined on the domain $L_{2}(\mathbb{R}^d,r_{1}^2dr)\cap\cdots\cap L_{2}(\mathbb{R}^d,r_{d}^2dr)$. Note that for $t\in\mathbb{R}^d$, $ \exp(\langle  t,\nabla_{\theta} \rangle) $ is the operator on $ L_{2}(\mathbb{R}^d)$ given by 
	\begin{align*}
		\exp(\langle  t,\nabla_{\theta} \rangle)g(r)= e^{2\pi\mathrm{i}\langle t,r  \rangle}g(r),~~r\in\mathbb{R}^d,\,\,g\in  L_{2}(\mathbb{R}^d).
	\end{align*}
	
	Now, we are ready to define the translation on $\mathbb{R}_{\theta}^{d}$. 
	\begin{definition} \label{24918.2}
		Let	$\mathbf{x}\in \mathbb{R}_{\theta}^{d}$ and $t\in \mathbb{R}^d$. The translation $T_{t}\mathbf{x}$ is defined by
		\begin{align*}
			T_{t}\mathbf{x}=\exp(\langle t,\nabla_{\theta}  \rangle)\mathbf{x}\exp(-\langle   t,\nabla_{\theta}\rangle).
		\end{align*}
	\end{definition} 
	
	\begin{remark}\label{241215.5}
		If $\theta=0$, then $T_{t}$ is   the classical translation operator   $\tilde{T}_{-t}$  defined on $\mathbb{R}^d$, which is   $\tilde{T}_{-t}g(\cdot)=g(\cdot-t)$. It was shown in \cite{MR4156216} that $T_{t}$ is an isometry in every $L_{p}(\mathbb{R}_{\theta}^{d})$, for $1\leq p\leq\infty$. Notice that
		\begin{align*}
			T_{t}\mathbf{x}=\exp(-\langle t,\nabla_{\theta}  \rangle)^*\mathbf{x}\exp(-\langle   t,\nabla_{\theta}\rangle), 
		\end{align*}
		then, $T_{t}$ is  a positive operator.
	\end{remark}

	Based on translation, it is natural to define the convolution $*_{\theta}$ on $\mathbb{R}_{\theta}^{d}$. Here we denote $*_{\theta}$ to avoid confusion with the usual convolution $*$ on $\mathbb{R}^d$.
	\begin{definition} 
		Let $1\leq p<\infty$ and  $\mathbf{x}\in L_{p}(\mathbb{R}_{\theta}^{d})$. For $\psi\in L_{1}(\mathbb{R}^{d})$, the convolution of  $\mathbf{x}$ with $\psi$ is defined by
		\begin{align}\label{24726.3}
			\psi*_{\theta}\mathbf{x}=\int_{\mathbb{R}^d}\psi(s)T_{-s} \mathbf{x}\,ds.
		\end{align}
	\end{definition}
	
	Consider $\mathbf{x}\in L_{2}(\mathbb{R}^{d}_{\theta})$ given by $\mathbf{x}=U(f)$ for $f\in L_{2}(\mathbb{R}^d)$. The following facts are  essential for us:
	\begin{align}\label{24065.3}
		T_{t}\mathbf{x}=U\big(e^{2\pi\mathrm{i}\langle t,\cdot  \rangle}f(\cdot)\big),
	\end{align}
	and
	\begin{align}\label{24726.4}
		\psi*_{\theta}\mathbf{x}=U(\widehat{\psi}f).
	\end{align}
	
	\vspace{0.05cm}
	
	\section{Maximal Riesz transform on quantum tori}\label{241121.3}
	This section is devoted to establishing the boundedness of the maximal Riesz transforms in terms of the Riesz transforms on quantum tori, i.e., Theorem \ref{241117.3}. One strategy for approaching our problem is to  transfer them to  corresponding ones in the setting of  tensor von Neumann algebra $ L_{\infty}(\mathbb{T}^d)\overline{\otimes}\mathcal{M}$ (see Proposition \ref{241118.1}).
	This  transference method is shown in \cite{MR3079331} and has been applied as a powerful tool in many works (see e.g. \cite{MR4381188,MR3778570}).

	Given $f:\mathbb{T}^d\rightarrow \mathcal{ {M}}$, its $j$-th semi-commutative Riesz transform $R_{j}f$ has the same form with (\ref{241117.8}). For  any $\varepsilon>0$, the associated  semi-commutative truncated Riesz transform  $R_{j}^{\varepsilon}f$   admits the following expression
	\begin{align}\label{25014.1}
		R_{j}^{\varepsilon}f(x)=\sum_{n\in \mathbb{Z}^d}(-\mathrm{i})\frac{n_{j}}{|n|}m(\varepsilon|n|)\widehat{f}(n)e^{2\pi\mathrm{i}\langle   n,x\rangle },
	\end{align}
	where    $m$ is given in (\ref{240612.1}).
	As for the  maximal Riesz  transform,   its $L_{p}$ norm   is understood as the  $L_{p}(L_{\infty}(\mathbb{T}^d)\overline{\otimes}\mathcal{M};l_{\infty}(\mathbb{R}_{+}))$ norm of the sequence $(R_{j}^{\varepsilon}f)_{\varepsilon}$ in the noncommutative case,  denoted by $\|\sup_{\varepsilon}R_{j}^{\varepsilon}f\|_{L_{p}(L_{\infty}(\mathbb{T}^d)\overline{\otimes}\mathcal{M})}$. The  results corresponding to Theorem \ref{241117.3} on the von Neumann algebra $ L_{\infty}(\mathbb{T}^d)\overline{\otimes}\mathcal{M}$ are    stated
	as follows.
	
	\begin{proposition}\label{241118.1}
		Let  $1<p<\infty$ and $f\in L_{p}(L_{\infty}(\mathbb{T}^d)\overline{\otimes}\mathcal{M})$, then there exists a constant $C_{d,p}$ such that 
		\begin{align}\label{241112.1}
			\|\sup_{\varepsilon>0}R_{j}^{\varepsilon}f\|_{L_{p}( L_{\infty}(\mathbb{T}^d)\overline{\otimes}\mathcal{M})}\leq C_{d,p}\|R_{j}f\|_{L_{p}(L_{\infty}(\mathbb{T}^d)\overline{\otimes}\mathcal{M})}.
		\end{align}
		Moreover, if $2\leq p<\infty$, the  constant in  $(\ref{241112.1})$ is independent of the dimension, i.e.,   
		\begin{align}\label{241112.2}
			\|\sup_{\varepsilon>0}R_{j}^{\varepsilon}f\|_{L_{p}(L_{\infty}(\mathbb{T}^d)\overline{\otimes}\mathcal{M})}\leq (96+2\sqrt{2})^{\frac{2}{p}}\|R_{j}f\|_{L_{p}(L_{\infty}(\mathbb{T}^d)\overline{\otimes}\mathcal{M})}.
		\end{align}
	\end{proposition}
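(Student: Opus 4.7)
The plan is to follow the general scheme of Liu--Melentijevi\'{c}--Zhu \cite{liu2023p}, carried out in the semi-commutative setting $L_{\infty}(\mathbb{T}^{d}) \overline{\otimes} \mathcal{M}$. The starting point is the multiplier identity (\ref{25014.1}): writing $g = R_{j}f$, one has $R_{j}^{\varepsilon}f = T_{\varepsilon} g$, where $T_{\varepsilon}$ is the radial \emph{scalar} Fourier multiplier on $\mathbb{T}^{d}$ with symbol $m(\varepsilon|\cdot|)$, acting componentwise on operator-valued functions. The task therefore reduces to proving the maximal inequality for $T_{\varepsilon}$ applied to $g$, a purely maximal-function statement about a nice radial multiplier that no longer involves the Riesz transform explicitly.

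The next step is to dominate $T_{\varepsilon}g$ in the operator sense by a Hardy--Littlewood average of $g$. Using the explicit form of $m$ in (\ref{240612.1}), one verifies that the kernel $K_{\varepsilon}$ of $T_{\varepsilon}$ on $\mathbb{T}^{d}$, obtained by periodizing the corresponding kernel on $\mathbb{R}^{d}$, is majorized in absolute value by a nonnegative radial decreasing integrable function whose $L^{1}$-norm is bounded uniformly in $\varepsilon$ (and in $d$ when $p \geq 2$). Decomposing $g$ into self-adjoint parts $g = g_{1} + \mathrm{i}g_{2}$, this yields the operator inequality $-C\,Mg_{k}(x) \leq T_{\varepsilon}g_{k}(x) \leq C\,Mg_{k}(x)$ for $k=1,2$ and all $\varepsilon>0$, where $M$ denotes the semi-commutative Hardy--Littlewood maximal operator. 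Combining the characterization (\ref{24829.2}) with the noncommutative Hardy--Littlewood maximal inequality (Proposition~\ref{241227.1}) --- whose $\mathbb{T}^{d}$ version follows from its $\mathbb{R}^{d}$ counterpart by a standard periodization argument --- then delivers the bound (\ref{241112.1}) for all $1<p<\infty$.

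For the dimension-free constant when $2\leq p < \infty$, the plan is to track constants carefully through the above argument and, at the appropriate place, to replace crude maximal bounds by the sharper scalar estimates of Liu et al.; the constant $(96+2\sqrt{2})^{2/p}$ in (\ref{241112.2}) is expected to arise as their scalar dimension-free bound times the universal noncommutative Hardy--Littlewood constant at $p=2$, interpolated with the trivial bound at $p=\infty$. The main obstacle will be the operator-valued pointwise inequality in the second step: in the scalar case one freely uses absolute values and pointwise majorization, but for operator-valued $g$ neither $|g|$ nor $g^{2}$ behaves well under convolution. The argument must therefore be arranged so that the operator-valued H\"{o}lder inequality (\ref{3802}) and the self-adjoint characterization (\ref{24829.2}) of the $L_{p}(\mathcal{M};l_{\infty})$ norm substitute for their scalar counterparts; this is where the main technical work will lie.
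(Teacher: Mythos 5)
Your factorization $R_j^{\varepsilon}f = A^{\varepsilon}(R_jf)$ with the scalar radial multiplier $m(\varepsilon|\cdot|)$ is the right starting point and matches Corollary~\ref{24328.5} in the paper. However, the key step you rely on --- that the kernel $\phi_{\varepsilon}$ of $A^{\varepsilon}$ (equivalently $K_\varepsilon$ on the torus after periodization) ``is majorized in absolute value by a nonnegative radial decreasing integrable function'' --- is false, and this is precisely the obstruction the paper's proof is built to circumvent. The kernel $\phi_1=h$ satisfies $R_jh = K_j^1 = c_d\,\chi_{|x|>1}\,x_j/|x|^{d+1}$, which has a jump across the sphere $|x|=1$; therefore $h$ has a (logarithmic) singularity concentrated on that sphere, belongs to $L_q(2B)$ for every finite $q$ but not to $L_\infty(2B)$, and is bounded near the origin. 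A function that blows up on $\{|x|=1\}$ but is bounded at $0$ cannot be dominated by any radial \emph{decreasing} majorant, integrable or not. So the pointwise operator domination $-CMg \le A^{\varepsilon}g \le CMg$ that your second step needs does not hold, and the argument does not close even for fixed $d$.

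What the paper does instead for $1<p<\infty$ is to use the identity $K_j^1 = R_jh$ to write $R_j^{\varepsilon}f(x) = \int h(y)\,R_j\tilde T_{x/\varepsilon}\eta_\varepsilon f(y)\,dy$, split the integral over $2B$ and $(2B)^c$, and on $2B$ compensate for the lack of an $L_\infty$ majorant by H\"older with exponent $s'$ against $h\in L_{s'}(2B)$, passing the Hardy--Littlewood average through $|\cdot|^s$ and using Proposition~\ref{241227.1} on $(\Re^{\pm}R_jf)^s\in L_{p/s}(\mathcal N)$; the tail $(2B)^c$ is handled dyadically with $|h(x)|\lesssim|x|^{-d-1}$. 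Your proposal would also need this kind of local/tail split. A second gap concerns the dimension-free constant: you invoke ``the universal noncommutative Hardy--Littlewood constant at $p=2$,'' but no dimension-free constant for the noncommutative $d$-dimensional Euclidean-ball maximal operator is available (Proposition~\ref{241227.1} has constant $C_{d,p}$). The $96$ in the paper comes from the \emph{one-dimensional} directional Hardy--Littlewood operator (Remark~\ref{240620.1}, constant $24$, times $4$ from the positive-part decomposition), which enters only after the integration-by-parts device $\phi_t=\frac1t\int_0^t\phi_s\,ds+\frac1t\int_0^t s\phi_s'\,ds$ of Proposition~\ref{24325.3}; the $2\sqrt2$ comes from the square-function bound on the derivative piece via Plancherel and Lemma~\ref{241212.4}. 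That route is essentially forced, because nothing dimension-free is known for the $d$-dimensional ball average in this setting. Finally, the paper passes from $\mathbb R^d$ to $\mathbb T^d$ by the multiplier transference Lemma~\ref{241113.1} rather than by periodizing the kernel; direct periodization, while conceivable for the qualitative bound, would complicate both the local singularity analysis and the constant-tracking, and you would still have to re-derive a torus analogue of Proposition~\ref{24325.3}. In short: the outer scheme is right, but the central pointwise domination is unavailable and must be replaced by the local/tail H\"older argument (for general $p$) and the integration-by-parts plus $1$D-directional-HL argument (for the dimension-free constant), followed by transference.
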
     
	
	Now, let us   use Proposition  \ref{241126.1} and Proposition \ref{241118.1}, whose proof will be provided later, to prove   Theorem \ref{241117.3}.
	
	\vspace{0.2cm}
	
	\noindent\textbf{Proof of Theorem \ref{241117.3}}: By  density arguments, it suffices to prove Theorem \ref{241117.3} for a polynomial  $\mathbf{x}=\sum_{n\in\mathbb{Z}^d}\widehat{\mathbf{x}}(n)\mathbf{V}^{n}$.
	We have shown in Subsection 2.3  that   $W_{z}$ is a positive isometry on $L_{p}(\mathbb{T}^d_{\theta})$. Thus, it extends to an isometry on $L_{p}(\mathbb{T}^d_{\theta};l_{\infty}(\mathbb{R}_{+}))$ (see \cite[Proposition 5.3]{MR4611833}).  Hence, for $z\in\mathbb{T}^d$, 
	\begin{align}\label{241225.1}
		\|\sup_{\varepsilon>0}R_{j}^{\varepsilon}\mathbf{x}\|^{p}_{L_{p}(\mathbb{T}^d_{\theta})}=\|\sup_{\varepsilon>0}W_{z} R_{j}^{\varepsilon}\mathbf{x}\|^{p}_{L_{p}(\mathbb{T}^d_{\theta})}.
	\end{align}
	Let  $\tilde{\mathbf{x}}$  be the operator-valued function  defined in  Proposition \ref{241126.1}. By (\ref{25014.1}), we get
	\begin{align}\label{241225.5} 
		R_{j}^{\varepsilon}\tilde{\mathbf{x}}(z)=&\sum_{n\in \mathbb{Z}^d}(-\mathrm{i})\frac{n_{j}}{|n|}m(\varepsilon|n|)\widehat{\tilde{\mathbf{x}} }(n)e^{2\pi\mathrm{i} \langle  n,z \rangle}\nonumber\\
		=&\sum_{n\in \mathbb{Z}^d}(-\mathrm{i})\frac{n_{j}}{|n|}m(\varepsilon|n|)\widehat{{\mathbf{x}} }(n)\mathbf{V}^{n}e^{2\pi\mathrm{i} \langle  n,z \rangle }=W_{z}R_{j}^{\varepsilon}\mathbf{x}.
	\end{align} 
	Then, integrating  both sides of (\ref{241225.1}) over $z\in\mathbb{T}^d$, we obtain  
	\begin{align*}		\|\sup_{\varepsilon>0}R_{j}^{\varepsilon}\mathbf{x}\|^p_{L_{p}(\mathbb{T}^d_{\theta})}=\int_{\mathbb{T}^d}\|\sup_{\varepsilon>0}W_{z} R_{j}^{\varepsilon}\mathbf{x}\|^p_{L_{p}(\mathbb{T}^d_{\theta})}dz
		=\int_{\mathbb{T}^d}\|\sup_{\varepsilon>0}R_{j}^{\varepsilon}\tilde{\mathbf{x}}(z)\|^{p}_{L_{p}(\mathbb{T}^d_{\theta})}dz.
	\end{align*}
	Recall that  $\mathcal{N}_{\theta}=L_{\infty}(\mathbb{T}^d)\overline{\otimes}\mathbb{T}^d_{\theta}$ is the tensor von Neumann algebra introduced in Subsection 2.3.  Once we have 
	\begin{align}\label{241126.3}
		\int_{\mathbb{T}^d}\|\sup_{\varepsilon>0}R_{j}^{\varepsilon}\tilde{\mathbf{x}}(z)\|^{p}_{L_{p}(\mathbb{T}^d_{\theta})}dz\leq \|\sup_{\varepsilon>0}R_{j}^{\varepsilon}\tilde{\mathbf{x}}\|^p_{L_{p}( \mathcal{N}_{\theta})},
	\end{align}
	we are able to apply Proposition \ref{241118.1} in this case to get
	\begin{align*} 
		\|\sup_{\varepsilon>0}R_{j}^{\varepsilon}\tilde{\mathbf{x}}\|^{p}_{ L_{p}( \mathcal{N}_{\theta})}
		\leq C_{d,p}\|R_{j}\tilde{\mathbf{x}}\|^{p}_{ L_{p}( \mathcal{N}_{\theta})}.
	\end{align*}
	Similar to (\ref{241225.5}), the formula $R_{j}\tilde{\mathbf{x}}(z)=W_{z}R_{j}\mathbf{x}$ also holds, which implies that 
	\begin{align*}
		\|R_{j}\tilde{\mathbf{x}}\|^{p}_{ L_{p}( \mathcal{N}_{\theta})} =\int_{\mathbb{T}^d}\|R_{j}\tilde{\mathbf{x}}(z)\|^{p}_{L_{p}(\mathbb{T}^d_{\theta})}dz
		= \int_{\mathbb{T}^d}\|W_{z}R_{j}\mathbf{x}\|^{p}_{L_{p}(\mathbb{T}^d_{\theta})}dz.
	\end{align*}
	Using  (\ref{241112.7}), we obtain 
	\begin{align*}
		\int_{\mathbb{T}^d}\|W_{z}R_{j}\mathbf{x}\|^{p}_{L_{p}(\mathbb{T}^d_{\theta})}dz=\|R_{j}\mathbf{x}\|^{p}_{L_{p}(\mathbb{T}^d_{\theta})}.
	\end{align*}
	Combining the above estimates, (\ref{241112.3})  has been proved. 
	
	It remains to prove (\ref{241126.3}). According to Definition \ref{24926.1}, for any $\delta>0$, there exist $a,b\in L_{2p}( \mathcal{N}_{\theta})$ and $(y_{\varepsilon})_{\varepsilon>0}\subset \mathcal{N}_{\theta}$ such that  $R_{j}^{\varepsilon}\tilde{\mathbf{x}}=ay_{\varepsilon}b$ for all $\varepsilon>0$, with
	\begin{align*}
		\|a\|_{L_{2p}(\mathcal{N}_{\theta} )}\,\sup_{\varepsilon>0}\|y_{\varepsilon}\|_{ \mathcal{N}_{\theta}}\, \|b\|_{L_{2p}( \mathcal{N}_{\theta})}\leq \|\sup_{\varepsilon>0}R_{j}^{\varepsilon}\tilde{\mathbf{x}}\|_{L_{p}( \mathcal{N}_{\theta})}+\delta.
	\end{align*}
	Thus
	\begin{align*}
		\int_{\mathbb{T}^d}\|\sup_{\varepsilon>0}R_{j}^{\varepsilon}\tilde{\mathbf{x}}(z)\|^{p}_{L_{p}(\mathbb{T}^d_{\theta})}dz\leq& \int_{\mathbb{T}^d} \|a(z)\|^p_{L_{2p}(\mathbb{T}^d_{\theta})}\,\sup_{\varepsilon>0}\|y_{\varepsilon}(z)\|^p_{\mathbb{T}^d_{\theta}} \,\|b(z)\|^p_{L_{2p}(\mathbb{T}^d_{\theta})}dz\\
		\leq& \|a\|^{p}_{L_{2p}( \mathcal{N}_{\theta})}\,\sup_{\varepsilon>0}\|y_{\varepsilon}\|^{p}_{ \mathcal{N}_{\theta}}\, \|b\|^{p}_{L_{2p}( \mathcal{N}_{\theta})}\\
		\leq& \big(\|\sup_{\varepsilon>0}R_{j}^{\varepsilon}\tilde{\mathbf{x}}\|_{L_{p}( \mathcal{N}_{\theta})}+\delta\big)^p.
	\end{align*}
	The arbitrariness of   $\delta$ implies that  (\ref{241126.3}) holds.
	
	Given (\ref{241112.2}), the proof of   (\ref{241112.4}) is similar; we omit the details.
	$\hfill\square$
	
	\vspace{0.2cm}

	The rest part of this section is    to  prove  Proposition \ref{241118.1}. 
	For this purpose, we first establish $L_{p}$ boundedness of the maximal Riesz transforms in terms of the Riesz transforms  within the framework of  the tensor von Neumann algebra $  \mathcal{ {N}}=L_{\infty}(\mathbb{R}^d)\overline{\otimes}\mathcal{M}$. Then, according to the transference principle (see Lemma \ref{241113.1}), we are able to prove this proposition.

	Given a $L_{1}(\mathcal{ {M}})\,\cap\,L_{\infty}(\mathcal{ {M}})$-valued compactly supported function $f$,
	recall its   $j$-th semi-commutative Riesz transform,  still denoted as $R_{j}f$,  which admits  the following expression:
	\begin{align*}
		R_{j}f(x)=\frac{\Gamma(\frac{d+1}{2})}{\pi^{\frac{d+1}{2}}}\int_{\mathbb{R}^d}f(y)\frac{x_{j}-y_{i}}{\,|x-y|^{d+1}}dy,\quad x\notin  \mbox{supp}\,f,
	\end{align*}
	where $\mbox{supp}\,f $ is the support of $f$.
	The  semi-commutative Calder\'{o}n-Zygmund theory in \cite{MR2476951} showed that,  for any $1<p<\infty$, the Calder\'{o}n-Zygmund operator  with the kernel satisfying the size and H\"{o}rmander condition is well defined on $L_{p}(\mathcal{N})$.  The Riesz transforms
	are   typical Calder\'{o}n-Zygmund operators, which fall  into the scope of the theory in \cite{MR2476951}. The $j$-th  semi-commutative truncated Riesz transform $R_{j}^{\varepsilon}$ has the same form as (\ref{24312.1}) but acts on functions in $L_{p}(\mathcal{ {N}})$. As for the 
	maximal Riesz  transform,   its $L_{p}(\mathcal{ {N}})$ norm   is understood as the  $L_{p}(\mathcal{ {N}};l_{\infty}(\mathbb{R}_{+}))$ norm of the sequence $(R_{j}^{\varepsilon}f)_{\varepsilon}$ in the noncommutative case,  denoted by $\|\sup_{\varepsilon}R_{j}^{\varepsilon}f\|_{L_{p}(\mathcal{N})}$.

	\begin{proposition}\label{24328.3}
		Let $1<p\leq\infty$ and $f\in L_{p}(\mathcal{N})$. Then there exists a constant $C_{d,p}$ such that 
		\begin{align}\label{241126.6} 
			\|\sup_{\varepsilon>0}R_{j}^{\varepsilon}f\|_{L_{p}(\mathcal{N})}\leq C_{d,p}\|R_{j}f\|_{L_{p}(\mathcal{N})}.
		\end{align}
		Moreover, if $2\leq p<\infty$, the constant in $(\ref{241126.6})$ is independent of the dimension, i.e.,
		\begin{align}\label{24327.1}
			\|\sup_{\varepsilon>0}R_{j}^{\varepsilon}f\|_{L_{p}(\mathcal{N})}\leq (2\sqrt{2}+96)^{\frac{2}{p}}\|R_{j}f\|_{L_{p}(\mathcal{N})}.
		\end{align}
	\end{proposition}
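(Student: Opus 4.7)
The plan is to establish an operator-valued Cotlar-type pointwise inequality that dominates $R_j^\varepsilon f$ by Hardy-Littlewood averages of $R_j f$, and then invoke the noncommutative Hardy-Littlewood maximal inequality of Proposition \ref{241227.1}. The starting point is a Fourier multiplier identity: since the symbols of $R_j$ and $R_j^\varepsilon$ differ only by the factor $m(\varepsilon|\xi|)$, one has $R_j^\varepsilon f = \Phi_\varepsilon *_x R_j f$ on $L_p(\mathcal{N})$, where $\Phi_\varepsilon(y) = \varepsilon^{-d}\Phi(y/\varepsilon)$ and $\Phi$ is the tempered distribution on $\mathbb{R}^d$ whose Fourier transform equals $m(|\cdot|)$. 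Here $*_x$ denotes convolution in the $\mathbb{R}^d$ variable only, which commutes with the action of $\mathcal{M}$.

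Next I decompose $\Phi = \alpha\, |B_1|^{-1}\chi_{B_1} + \Psi$ with $\Psi \in L^1(\mathbb{R}^d)$, so that
$$R_j^\varepsilon f(x) = \alpha\, M_\varepsilon(R_j f)(x) + (\Psi_\varepsilon *_x R_j f)(x).$$
Splitting $R_j f$ into its four positive pieces (real/imaginary and positive/negative parts) and using the operator-valued inequality $\pm(\psi *_x g) \leq \psi *_x |g|$ for positive scalar $\psi$, both terms are dominated pointwise by Hardy-Littlewood-type averages of the positive pieces (via a radial nonincreasing majorant of $|\Psi|$). Proposition \ref{241227.1} then produces a single positive $G \in L_p(\mathcal{N})$ that majorizes $\pm \mathrm{Re}\, R_j^\varepsilon f,\; \pm \mathrm{Im}\, R_j^\varepsilon f$ uniformly in $\varepsilon$, with $\|G\|_{L_p(\mathcal{N})} \leq C_{d,p}\|R_j f\|_{L_p(\mathcal{N})}$. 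The characterization (\ref{24829.2}) of the $L_p(\mathcal{N};l_\infty)$ norm then yields (\ref{241126.6}).

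For the dimension-free bound (\ref{24327.1}) in the range $p\geq 2$, the strategy is to interpolate in the $L_p(\mathcal{N};l_\infty)$ scale between a trivial $L_\infty$ endpoint (in which the kernel decomposition is arranged so that $|\alpha| + \|\Psi\|_1 \leq 1$, giving constant $1$) and an $L_2$ endpoint carrying the dimension-free constant $2\sqrt{2}+96$. The $L_2$ bound itself is established via the semi-commutative analogue of the Littlewood-Paley argument of Kucharski--Wr\'obel \cite{MR4585167}: the analytic estimates they use depend only on the scalar multiplier $m$, which is unaffected by replacing the scalar Lebesgue integral with the tensor trace $\int_{\mathbb{R}^d}dx\otimes\tau$. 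The exponent $2/p$ in $(2\sqrt{2}+96)^{2/p}$ then arises from the standard interpolation inequality for the factorizations defining $L_p(\mathcal{N};l_\infty)$.

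The principal obstacle lies in the kernel decomposition: controlling $|\alpha|$ and $\|\Psi\|_1$ independently of $d$ requires a sharp analysis of the scalar multiplier $m$ given in (\ref{240612.1}), mirroring the optimal arithmetic developed by Mateu--Verdera and Liu--Melentijevi\'c--Zhu. A secondary technical difficulty is assembling a single positive majorant $G$ for the entire continuum of $(R_j^\varepsilon f)_\varepsilon$ rather than bounding each $\varepsilon$ separately; this requires the factorization formulation of $L_p(\mathcal{N};l_\infty)$ together with a careful matching of the factorizations furnished by Proposition \ref{241227.1} across all truncation scales.
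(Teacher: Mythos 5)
Your starting point is essentially the same as the paper's: writing $R_j^\varepsilon f = \phi_\varepsilon * (R_jf)$ where $\widehat{\phi}_1 = m(|\cdot|)$. The paper encodes this via the Mateu--Verdera identity $K_j^1 = R_j h$ (so that $\widehat{h}=m(|\cdot|)$, i.e.\ $h=\phi_1$ up to reflection), obtaining $R_j^\varepsilon f(x)=\int h(y) R_jf(x+\varepsilon y)\,dy$. So the proposal captures the right structure. However, there are real gaps in how you then execute the two halves of the argument.

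For the general $1<p\le \infty$ bound, your decomposition $\Phi=\alpha\,|B_1|^{-1}\chi_{B_1}+\Psi$ followed by ``a radial nonincreasing majorant of $|\Psi|$'' quietly assumes $\Phi$ is bounded near the origin, which is not established and is not how the kernel behaves: what Mateu--Verdera give is $h\in L_q(2B)$ for every finite $q$, together with $|h(x)|\lesssim|x|^{-(d+1)}$ for $|x|\ge 2$. Subtracting a multiple of $\chi_{B_1}$ does nothing to tame a local singularity, and the existence of an integrable radial nonincreasing majorant is not automatic. The paper circumvents this by splitting the integral over $2B$ and $(2B)^c$ separately: on $2B$ it uses H\"older's inequality, pairing $h\in L_{s'}(2B)$ against $\big(\frac{1}{|2\varepsilon B|}\int_{2\varepsilon B}(\Re^{+}R_jf(x+y))^{s}\,dy\big)^{1/s}$ with $1<s<p$, and then applies Proposition \ref{241227.1} not to $R_jf$ but to $(\Re^{+}R_jf)^{s}\in L_{p/s}(\mathcal{N})$, extracting the $s$-th root by operator monotonicity. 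On $(2B)^c$ the polynomial decay of $h$ gives a dyadic sum of genuine Hardy--Littlewood averages. This $L^s$-averaging step is the crucial ingredient your Cotlar-type outline is missing. Incidentally, Proposition \ref{241227.1} already yields a single majorant $G$ valid for all $t>0$; the ``careful matching of factorizations across truncation scales'' you worry about is not needed once one works with the selfadjoint characterization (\ref{24829.2}).

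For the dimension-free $p\ge 2$ bound, your route through the Littlewood--Paley argument of Kucharski--Wr\'obel is not what the paper does, and your $L^\infty$ endpoint is mis-stated: for any decomposition $\Phi=\alpha\,|B_1|^{-1}\chi_{B_1}+\Psi$ the triangle inequality gives $\|\Phi\|_1\le|\alpha|+\|\Psi\|_1$, not the reverse. The paper instead follows Liu--Melentijevi\'c--Zhu: after the factorization $R_j^t=A^t R_j$ (plus a density lemma, Lemma \ref{24328.1}, to justify transferring the optimal constant), one integrates by parts,
\[
\phi_t * f = \frac{1}{t}\int_0^t \phi_s*f\,ds + \frac{1}{t}\int_0^t s\frac{d}{ds}\phi_s*f\,ds,
\]
then controls the first term (after splitting into four positive parts) via the \emph{one-dimensional} directional Hardy--Littlewood maximal function $\sup_r \frac{1}{r}\int_0^r f(x-ty)\,dt$ (Proposition \ref{24325.3}), whose $L^2$ constant is dimension-free and equal to $24$; the second term is an honest square function controlled by $\sup_\xi\int_0^\infty |s\frac{d}{ds}\widehat{\phi_s}(\xi)|^2\frac{ds}{s}\le \frac12$ (Lemma \ref{241212.4}). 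The $L^\infty$ endpoint is simply $\|\phi_t\|_1=1$. Interpolating $L^2$-to-$L^\infty$ in the $l^\infty$ scale gives $(96+2\sqrt{2})^{2/p}$. This route is more elementary than the Littlewood--Paley decomposition you reference, and it is what delivers the explicit constant in the statement.
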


	We first prove (\ref{241126.6}) in Proposition \ref{24328.3}. One important ingredient to prove (\ref{241126.6})  is that  there exists a real function $h$ such that 
	\begin{align}\label{24829.1}
		K_{j}^{1}(x)=R_{j}h(x),~~x\in\mathbb{R}^d,~ 1\leq j\leq d,
	\end{align}
	where $K_{j}^{1}(x)$ is the kernel of the  $j$-th  truncated Riesz transform with $\varepsilon=1$ (see \cite[Lemma 4]{MR2280788}). 
	Moreover,  let $B$ denote  the unit ball in $\mathbb{R}^d$, and $2B$  stands  for the ball with the same center as $B$   and radius  $2$,  we have $h\in L_{q}(2B)$ for $1<q<\infty$ and  $|h(x)|\leq  \frac{C}{|x|^{d+1}}$ if $|x|\geq 2$. The reader is   referred to \cite[Lemma 4]{MR2280788} for more details.
	
	\vspace{0.2cm}
	
	\noindent\textbf{Proof of  Proposition \ref{24328.3}: (\ref{241126.6}).} Let $1<p\leq\infty$ and $f\in L_{p}(\mathcal{N})$. For $\alpha\neq0\in \mathbb{R}$ and $\beta\in \mathbb{R}^d$, we define $\eta_{\alpha}$ as  $\eta_{\alpha}f(x)=f(\alpha x)$  and $\tilde{T}_{\beta}$ as  $\tilde{T}_{\beta}f(x)=f(x+\beta)$, respectively. Similar to commutative case,  the commutativity property  of the Riesz transforms with  $\tilde{T}_{\beta}$ and $\eta_{\alpha}$  also holds, 
	i.e.,
	\begin{align}\label{24115.1}
		\tilde{T}_{\beta}R_{j}=R_{j}\tilde{T}_{\beta}\quad \mbox{and} \quad	\eta_{\alpha}R_{j}=R_{j}\eta_{\alpha}.
	\end{align}
	Consider the truncated Riesz transforms. It is easy to see that $	\tilde{T}_{\beta}R_{j}^{\varepsilon}=R_{j}^{\varepsilon}\tilde{T}_{\beta}$ and $R_{j}^{\varepsilon}f(x)=\eta_{\frac{1}{\varepsilon}}R_{j}^{1}\eta_{\varepsilon}f(x)$. Thus, we get
	\begin{align}\label{240610.6}
		R_{j}^{\varepsilon}f(x)=\eta_{\frac{1}{\varepsilon}}R_{j}^{1}\eta_{\varepsilon}f(x)=\tilde{T}_{\frac{x}{\varepsilon}}R_{j}^{1}\eta_{\varepsilon}f(0)=R_{j}^{1}\tilde{T}_{\frac{x}{\varepsilon}}\eta_{\varepsilon}f(0).
	\end{align}
	Apply   (\ref{24829.1}) and (\ref{240610.6})  to obtain
	\begin{align*}
		R_{j}^{\varepsilon}f(x)=& \int_{\mathbb{R}^d}-K_{j}^{1}(y)\tilde{T}_{\frac{x}{\varepsilon}}\eta_{\varepsilon}f(y)dy\\
		=&\int_{\mathbb{R}^d}-R_{j}h(y)\tilde{T}_{\frac{x}{\varepsilon}}\eta_{\varepsilon}f(y)dy\\
		=&\int_{\mathbb{R}^d}h(y)R_{j}\tilde{T}_{\frac{x}{\varepsilon}}\eta_{\varepsilon}f(y)dy.
	\end{align*}
	In the following, we decompose $R_{j}\tilde{T}_{\frac{x}{\varepsilon}}\eta_{\varepsilon}f(y)$   into the linear  combination of four positive parts:  
	\begin{align*} 
		R_{j}\tilde{T}_{\frac{x}{\varepsilon}}\eta_{\varepsilon}f(y)=&\Re^{+}(R_{j}\tilde{T}_{\frac{x}{\varepsilon}}\eta_{\varepsilon}f(y))+\Re^{-}(R_{j}\tilde{T}_{\frac{x}{\varepsilon}}\eta_{\varepsilon}f(y))\\
		&+\mathrm{i}\Im^{+}(R_{j}\tilde{T}_{\frac{x}{\varepsilon}}\eta_{\varepsilon}f(y))+\mathrm{i}\Im^{-}(R_{j}\tilde{T}_{\frac{x}{\varepsilon}}\eta_{\varepsilon}f(y)).
	\end{align*}
	Then, the triangle inequality and   above decomposition imply that 
	$\|\sup_{\varepsilon>0}R_{j}^{\varepsilon}f\|_{L_{p}(\mathcal{N})}$ is controlled by the sum of the following four terms:
	\begin{align*}
		\uppercase\expandafter{\romannumeral1}_{1}=\bigg\|\sup_{\varepsilon>0}\int_{\mathbb{R}^d}h(y)\Re^{+}(R_{j}\tilde{T}_{\frac{x}{\varepsilon}}\eta_{\varepsilon}f(y))dy\bigg\|_{L_{p}(\mathcal{N})},\\
		\uppercase\expandafter{\romannumeral1}_{2}=\bigg\|\sup_{\varepsilon>0}\int_{\mathbb{R}^d}h(y)\Re^{-}(R_{j}\tilde{T}_{\frac{x}{\varepsilon}}\eta_{\varepsilon}f(y))dy\bigg\|_{L_{p}(\mathcal{N})},\\
		\uppercase\expandafter{\romannumeral1}_{3}=\bigg\|\sup_{\varepsilon>0}\int_{\mathbb{R}^d}h(y)\Im^{+}(R_{j}\tilde{T}_{\frac{x}{\varepsilon}}\eta_{\varepsilon}f(y))dy\bigg\|_{L_{p}(\mathcal{N})},\\
		\uppercase\expandafter{\romannumeral1}_{4}=\bigg\|\sup_{\varepsilon>0}\int_{\mathbb{R}^d}h(y)\Im^{-}(R_{j}\tilde{T}_{\frac{x}{\varepsilon}}\eta_{\varepsilon}f(y))dy\bigg\|_{L_{p}(\mathcal{N})}.
	\end{align*}
	It suffices to show that
	\begin{align}\label{24065.1}
		\max\left\{\uppercase\expandafter{\romannumeral1}_{1},\uppercase\expandafter{\romannumeral1}_{2},\uppercase\expandafter{\romannumeral1}_{3},\uppercase\expandafter{\romannumeral1}_{4} \right\} \leq C \|R_{j}f\|_{L_{p}(\mathcal{N})}.
	\end{align}
	
	We only give an estimate of $\uppercase\expandafter{\romannumeral1}_{1}$, since the estimates of $\uppercase\expandafter{\romannumeral1}_{2}$,  $\uppercase\expandafter{\romannumeral1}_{3}$,  and $\uppercase\expandafter{\romannumeral1}_{4}$ are similar.
	Let us split the integral in $\uppercase\expandafter{\romannumeral1}_{1}$ into $ 2B $ and $(2B)^{c}$,  then $\uppercase\expandafter{\romannumeral1}_{1}$ is bounded by
	\begin{align}\label{241014.1}
		\bigg\|\sup_{\varepsilon>0}\int_{2B}h(y)\Re^{+}(R_{j}\tilde{T}_{\frac{x}{\varepsilon}}\eta_{\varepsilon}f(y))dy\bigg\|_{L_{p}(\mathcal{N})}+\bigg\|\sup_{\varepsilon>0}\int_{(2B)^{c}}h(y)\Re^{+}(R_{j}\tilde{T}_{\frac{x}{\varepsilon}}\eta_{\varepsilon}f(y))dy\bigg\|_{L_{p}(\mathcal{N})}.
	\end{align}
	We first deal with the first term of  (\ref{241014.1}). Fix $s$ such that $1<s<p$. Using the H\"{o}lder inequality (\ref{3802}) and the fact that $h\in L_{s}(2B)$, we have 
	\begin{align*}
		\int_{2B}h(y)\Re^{+}(R_{j}\tilde{T}_{\frac{x}{\varepsilon}}\eta_{\varepsilon}f(y))dy\leq& \int_{2B}|h(y)|\Re^{+}(R_{j}\tilde{T}_{\frac{x}{\varepsilon}}\eta_{\varepsilon}f(y))dy\\
		\leq&\bigg(\int_{2B}|h(y)|^{s'}dy\bigg)^{\frac{1}{s'}}\bigg(\int_{2B}\big(\Re^{+}(R_{j}\tilde{T}_{\frac{x}{\varepsilon}}\eta_{\varepsilon}f(y))\big)^{s}dy\bigg)^{\frac{1}{s}}\\
		\leq& C_{s,d}\bigg(\frac{1}{|2B|}\int_{2B}\big(\Re^{+}(R_{j}\tilde{T}_{\frac{x}{\varepsilon}}\eta_{\varepsilon}f(y)))^{s}dy\bigg)^{\frac{1}{s}}.
	\end{align*}
	By (\ref{24115.1}), the above integral is bounded by
	\begin{align*}
		\bigg(\frac{1}{|2B|}\int_{2B}\big(\Re^{+}(R_{j}\tilde{T}_{\frac{x}{\varepsilon}}\eta_{\varepsilon}f(y))\big)^{s}dy\bigg)^{\frac{1}{s}}
		=&\bigg(\frac{1}{|2B|}\int_{2B}\bigg(\Re^{+}\bigg(R_{j}\eta_{\varepsilon}f\bigg(\frac{x}{\varepsilon}+y\bigg)\bigg)\bigg)^{s}dy\bigg)^{\frac{1}{s}}\\
		=&\bigg(\frac{1}{|2B|}\int_{2B}\big(\Re^{+}(R_{j}f(x+\varepsilon y))\big)^{s}dy\bigg)^{\frac{1}{s}}\\
		=&\bigg(\frac{1}{|2\varepsilon B|}\int_{2\varepsilon B}\big(\Re^{+}(R_{j}f(x+y))\big)^{s}dy\bigg)^{\frac{1}{s}}.
	\end{align*}
	Applying Proposition \ref{241227.1}  to $\big(\Re^{+}(R_{j}f)\big)^{s}\in L_{\frac{p}{s}}(\mathcal{N})$, there exists a positive operator-valued function $F\in L_{\frac{p}{s}}(\mathcal{N})$ such that  
	\begin{align*}
		\frac{1}{|2\varepsilon B|}\int_{2\varepsilon B}\big(\Re^{+}(R_{j}f(x+y))\big)^{s}dy\leq F(x),~\forall\varepsilon>0,
	\end{align*}
	with
	\begin{align*}
		\|F\|_{L_{\frac{p}{s}}(\mathcal{N})}\leq C_{d}\frac{p^2}{(p-s)^2} \big\|\big(\Re^{+}(R_{j}f)\big)^{s}\big\|_{L_{\frac{p}{s}}(\mathcal{N})}.
	\end{align*}
	For $0<\frac{1}{s}<1$, the monotone increasing property of $a\rightarrow a^{\frac{1}{s}}\,(a>0)$ leads to 
	\begin{align*}
		\bigg(\frac{1}{|2\varepsilon B|}\int_{2\varepsilon B}\big(\Re^{+}(R_{j}f(x+y))\big)^{s}dy\bigg)^{\frac{1}{s}}\leq F^{\frac{1}{s}}(x),~~\forall\varepsilon>0,
	\end{align*}
	with 
	\begin{align}\label{241215.1}
		\|F^{\frac{1}{s}}\|_{L_{p}(\mathcal{N})}=&\|F\|^{\frac{1}{s}}_{L_{\frac{p}{s}}(\mathcal{N})}\nonumber
		\leq C_{d}^{\frac{1}{s}}\frac{p^{\frac{2}{s}}}{(p-s)^{\frac{2}{s}}}\big\|\big(\Re^{+}(R_{j}f))^{s}\big\|^{\frac{1}{s}}_{L_{\frac{p}{s}}(\mathcal{N})}\nonumber\\ 
		=&C_{d}^{\frac{1}{s}}\frac{p^{\frac{2}{s}}}{(p-s)^{\frac{2}{s}}}\|\Re^{+}(R_{j}f)\|_{L_{p}(\mathcal{N})}.
	\end{align} 
	From the above arguments, we conclude that 
	\begin{align}\label{241215.2}
		-C_{p,d}F^{\frac{1}{s}}(x)\leq\int_{2B}h(y)\Re^{+}(R_{j}\tilde{T}_{\frac{x}{\varepsilon}}\eta_{\varepsilon}f(y))dy\leq C_{p,d} F^{\frac{1}{s}}(x),~~~\forall\varepsilon>0.
	\end{align}
	Finally, together with (\ref{24829.2}), (\ref{241215.1}) and (\ref{241215.2}), we see that
	\begin{align}\label{24917.1}
		\bigg\|\sup_{\varepsilon}\int_{2B}h(y)\Re^{+}(R_{j}\tilde{T}_{\frac{x}{\varepsilon}}\eta_{\varepsilon}f(y))dy\bigg\|_{L_{p}(\mathcal{N})} 
		\leq  C_{p,d}\|\Re^{+}(R_{j}f)\|_{L_{p}(\mathcal{N})} 
		\leq  C_{p,d}\| R_{j}f\|_{L_{p}(\mathcal{N})}.
	\end{align}
	
	Now, we consider  the second term of (\ref{241014.1}). The fact that $|h(x)|\leq \frac{C}{|x|^{d+1}}$ for $|x|\geq 2$  implies 
	\begin{align*}
		\int_{(2B)^{c}}h(y)\Re^{+}(R_{j}\tilde{T}_{\frac{x}{\varepsilon}}\eta_{\varepsilon}f(y))dy\leq &	\int_{(2B)^{c}}|h(y)|\Re^{+}(R_{j}\tilde{T}_{\frac{x}{\varepsilon}}\eta_{\varepsilon}f(y))dy\\
		\leq&C\int_{(2B)^{c}}\frac{1}{|y|^{d+1}}\Re^{+}(R_{j}\tilde{T}_{\frac{x}{\varepsilon}}\eta_{\varepsilon}f(y))dy\\
		=&C\sum_{k=1}^{\infty}\int_{2^{k}<|y|\leq2^{k+1}}\frac{1}{|y|^{d+1}}\Re^{+}(R_{j}\tilde{T}_{\frac{x}{\varepsilon}}\eta_{\varepsilon}f(y))dy\\
		\leq&C\sum_{k=1}^{\infty}2^{-k}\frac{1}{|2^{k+1}B|}\int_{2^{k+1}B}\Re^{+}(R_{j}\tilde{T}_{\frac{x}{\varepsilon}}\eta_{\varepsilon}f(y))dy.
	\end{align*}
	Applying (\ref{24115.1})  again, we obtain
	\begin{align*}
		\int_{(2B)^{c}}h(y)\Re^{+}(R_{j}\tilde{T}_{\frac{x}{\varepsilon}}\eta_{\varepsilon}f(y))dy\leq C\sum_{k=1}^{\infty}2^{-k}\frac{1}{|2^{k+1}\varepsilon B|}\int_{2^{k+1}\varepsilon B}\Re^{+}(R_{j}f(x+y))dy.
	\end{align*}
	For any $k>0$, $\varepsilon>0$,   Proposition \ref{241227.1} implies that there exists a positive operator-valued function $G\in L_{p}(\mathcal{N})$ such that 
	\begin{align*}
		\frac{1}{|2^{k+1}\varepsilon B|}\int_{2^{k+1}\varepsilon B}\Re^{+}(R_{j}f(x+y))dy\leq G(x),~~\forall\varepsilon >0,~ k>0,
	\end{align*}
	with 
	\begin{align}\label{241215.3}
		\|G\|_{L_{p}(\mathcal{N})}\leq C_{d,p}  \|\Re^{+}(R_{j}f) \|_{L_{p}(\mathcal{N})}.
	\end{align}
	Then, we   get
	\begin{align}\label{241215.4}
		-G(x)\leqq -C\sum_{k=1}^{\infty}2^{-k}G(x)	\leq& \int_{(2B)^{c}}h(y)\Re^{+}(R_{j}\tilde{T}_{\frac{x}{\varepsilon}}\eta_{\varepsilon}f(y))dy\nonumber\\
		\leq& C\sum_{k=1}^{\infty}2^{-k}G(x)\leqq G(x).
	\end{align}
	By (\ref{24829.2}),  (\ref{241215.3}), together with (\ref{241215.4}), we have
	\begin{align}\label{24918.1}
		\bigg\|\sup_{\varepsilon>0}\int_{(2B)^{c}}h(y)\Re^{+}(R_{j}\tilde{T}_{\frac{x}{\varepsilon}}\eta_{\varepsilon}f(y))dy\bigg\|_{L_{p}(\mathcal{N})}\leq  	C_{p,d} \|\Re^{+}(R_{j}f) \|_{L_{p}(\mathcal{N})} 
		\leq  C_{p,d}  \|R_{j}f \|_{L_{p}(\mathcal{N})}.
	\end{align}
	Then (\ref{24917.1}) and (\ref{24918.1}) show $\uppercase\expandafter{\romannumeral1}_{1} \leq C\|R_{j}f\|_{L_{p}(\mathcal{N})}.$ The same method also works for $\uppercase\expandafter{\romannumeral1}_{2}$,  $\uppercase\expandafter{\romannumeral1}_{3}$, and $\uppercase\expandafter{\romannumeral1}_{4}$, which means that (\ref{24065.1}) holds.  So (\ref{241126.6}) in Proposition \ref{24328.3} has been proved. $\hfill\square$
	
	\vspace{0.2cm}
	
	In order to give the  dimension free  estimate (\ref{24327.1}) in Proposition \ref{24328.3}, we need a factorization of the truncated Riesz transform, i.e., $R_{j}^{t}=A^{t}(R_{j}),  \mbox{ for }  1\leq j\leq d$. Here  $A^{t}$ is the multiplier operator defined by
	\begin{align}\label{24329.3}
		\widehat{A^{t}f}(\xi)=m(t|\xi|)\widehat{f}(\xi),~~f\in L_{2}(\mathcal{N}),
	\end{align}
	with the symbol function $m$ given in (\ref{240612.1}). The following lemma  on the density of $R_{j}\big(L_{p}(\mathcal{N})\big)$  in $L_{p}(\mathcal{N})$ is essential for this factorization.

	\begin{lemma}\label{24328.1}
		Let $1<p<\infty$ and $j=1,\cdots,d$. Then $R_{j}\big(L_{p}(\mathcal{N})\big)\cap (\mathcal{S}(\mathbb{R}^d)\otimes \mathcal{S}({\mathcal{M}}))$ is dense in $L_{p}(\mathcal{N})$. In particular, $R_{j}\big(L_{p}(\mathcal{N})\big)$ is dense in $L_{p}(\mathcal{N})$. 
	\end{lemma}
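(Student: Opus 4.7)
The plan is to argue constructively by cutting out a neighborhood of the hyperplane $\{\xi_{j}=0\}$ on the Fourier side, which is where the formal inverse of the Riesz multiplier $-\mathrm{i}\xi_{j}/|\xi|$ is singular. Since $\mathcal{S}(\mathbb{R}^{d})\otimes\mathcal{S}(\mathcal{M})$ is dense in $L_{p}(\mathcal{N})$, it suffices to approximate, in the $L_{p}(\mathcal{N})$-norm, any fixed element $g=\sum_{k}\phi_{k}\otimes m_{k}$ of the algebraic tensor product (with $\phi_{k}\in\mathcal{S}(\mathbb{R}^{d})$ and $m_{k}\in\mathcal{S}(\mathcal{M})$) by elements of $R_{j}(L_{p}(\mathcal{N}))\cap(\mathcal{S}(\mathbb{R}^{d})\otimes\mathcal{S}(\mathcal{M}))$.

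Fix $\psi\in C^{\infty}(\mathbb{R})$ with $\psi(t)=0$ for $|t|\leq 1$ and $\psi(t)=1$ for $|t|\geq 2$. For $\delta>0$, define $\tilde{\phi}_{k,\delta},\alpha_{k,\delta}\in\mathcal{S}(\mathbb{R}^{d})$ through their Fourier transforms
\begin{align*}
\widehat{\tilde{\phi}_{k,\delta}}(\xi)=\psi(\xi_{j}/\delta)\,\widehat{\phi_{k}}(\xi),\qquad \widehat{\alpha_{k,\delta}}(\xi)=\frac{\mathrm{i}|\xi|}{\xi_{j}}\psi(\xi_{j}/\delta)\,\widehat{\phi_{k}}(\xi),
\end{align*}
and set $h_{\delta}=\sum_{k}\tilde{\phi}_{k,\delta}\otimes m_{k}$, $f_{\delta}=\sum_{k}\alpha_{k,\delta}\otimes m_{k}$. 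On the support of $\psi(\xi_{j}/\delta)$ we have $|\xi_{j}|\geq\delta$ and hence $\xi\neq 0$, so the multiplier $\frac{|\xi|}{\xi_{j}}\psi(\xi_{j}/\delta)$ is genuinely $C^{\infty}$ on $\mathbb{R}^{d}$ with derivatives of at most polynomial growth; consequently $\alpha_{k,\delta}\in\mathcal{S}(\mathbb{R}^{d})$. Thus $f_{\delta},h_{\delta}\in\mathcal{S}(\mathbb{R}^{d})\otimes\mathcal{S}(\mathcal{M})\subset L_{p}(\mathcal{N})$, and comparing Fourier symbols immediately yields $R_{j}f_{\delta}=h_{\delta}$.

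It remains to verify that $\|g-h_{\delta}\|_{L_{p}(\mathcal{N})}\to 0$ as $\delta\to 0$. The triangle inequality gives
\begin{align*}
\|g-h_{\delta}\|_{L_{p}(\mathcal{N})}\leq \sum_{k}\|\phi_{k}-\tilde{\phi}_{k,\delta}\|_{L_{p}(\mathbb{R}^{d})}\|m_{k}\|_{L_{p}(\mathcal{M})},
\end{align*}
so the problem reduces to the purely scalar fact that the Fourier multiplier family on $L_{p}(\mathbb{R}^{d})$ with symbols $\psi(\xi_{j}/\delta)$ converges strongly to the identity as $\delta\to 0$. This is classical: the symbol is a fixed smooth function dilated by $1/\delta$ in the $\xi_{j}$-direction, so its $L_{p}$-multiplier norm is dilation-invariant and uniformly bounded in $\delta$; together with the a.e.\ pointwise convergence of the symbol to $1$ and a standard density argument, strong convergence follows. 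The mildly delicate step in the whole proof is the smoothness check for $\frac{|\xi|}{\xi_{j}}\psi(\xi_{j}/\delta)$ at the origin and across $\xi_{j}=0$, which is what forces the use of the cutoff $\psi$; everything else is routine.
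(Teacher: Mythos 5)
Your argument is correct, and it is a genuinely different route from the paper's. The paper proves the stronger claim that $(R_{j})^{2}\big(L_{p}(\mathcal{N})\big)\cap(\mathcal{S}(\mathbb{R}^{d})\otimes\mathcal{S}(\mathcal{M}))$ is already dense, using the one-variable heat semigroup $T^{j}_{t}$ (with symbol $e^{-4\pi^{2}t\xi_{j}^{2}}$) and the exact identity $T^{j}_{t}f-f=-(R_{j})^{2}g_{t}$ with $g_{t}=\int_{0}^{t}T^{j}_{s}(\Delta f)\,ds$; density then follows from $\|T^{j}_{t}f\|_{L_{p}}\to 0$ as $t\to\infty$. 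You instead build an explicit preimage directly under a single $R_{j}$ by inserting the cutoff $\psi(\xi_{j}/\delta)$ away from the hyperplane $\{\xi_{j}=0\}$ and dividing by the Riesz symbol where it is nonvanishing; the heart of your argument is the strong convergence of the dilated multiplier $\psi(\xi_{j}/\delta)$ to the identity on $L_{p}$, which, as you note, follows from dilation-invariance of the $L_{p}$-multiplier norm plus Plancherel-type convergence on a dense class (e.g., interpolating a decaying $L_{2}$ bound against uniform $L_{1}$ or $L_{\infty}$ bounds for Schwartz inputs). Both proofs reduce to scalar Fourier analysis thanks to the elementary-tensor structure; your version avoids the Laplacian/semigroup machinery and produces the approximant and its $R_{j}$-preimage in one stroke, at the cost of the smoothness check for $\frac{|\xi|}{\xi_{j}}\psi(\xi_{j}/\delta)$ near the origin and across $\xi_{j}=0$, which you correctly observe is handled by the support of the cutoff.
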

	\begin{proof}
		Let  $(R_{j})^2$ denote the two-fold composition of $R_{j}$. It is easy to see that	the boundedness of the operator $R_{j}$ on $L_{p}(\mathcal{N})$ shows that $(R_{j})^2\big(L_{p}(\mathcal{N})\big)\subset R_{j}\big(L_{p}(\mathcal{N})\big)$. Thus, it suffices to prove that the subset $(R_{j})^2\big(L_{p}(\mathcal{N})\big)\cap (\mathcal{S}(\mathbb{R}^d)\otimes \mathcal{S}({\mathcal{M}}))$ of $R_{j}\big(L_{p}(\mathcal{N})\big)\cap (\mathcal{S}(\mathbb{R}^d)\otimes \mathcal{S}({\mathcal{M}}))$ is dense in $L_{p}(\mathcal{N})$.
		
		In the following, let  $f\in \mathcal{S}(\mathbb{R}^d)\otimes \mathcal{S}({\mathcal{M}})$ be expressed in the form: $f=\sum_{k=1}^{n}f_{k}\otimes m_{k}$, where $f_{k}\in \mathcal{S}(\mathbb{R}^d)$ and $m_{k}\in \mathcal{S}({\mathcal{M}})$ for $1\leq k\leq n$.  For $t>0$, we define the convolution operator $T_{t}^{j}$ by
		\begin{align*}
			T_{t}^{j}f(x)=\frac{1}{\sqrt{4\pi t}}\int_{\mathbb{R}}\exp\bigg({-\frac{|x_{j}-y_{j}|^2}{4t}}\bigg)f(x_{1},\cdots,x_{j-1},y_{j},x_{j+1},\cdots,x_{d})d{y_{j}}.
		\end{align*}
		Taking the Fourier transform on both sides, we have 
		\begin{align}\label{24329.1}
			\widehat{T_{t}^{j}f}(\xi)=\exp({-4\pi^2t\,\xi_{j}^2})\widehat{f}(\xi),~~\xi\in\mathbb{R}^d.
		\end{align}
		The fact that $\mathcal{S}(\mathbb{R}^d)$  is closed under the Fourier transform ensures $\widehat{T_{t}^{j}f}\in  \mathcal{S}(\mathbb{R}^d)\otimes \mathcal{S}({\mathcal{M}}) $ if $f\in  \mathcal{S}(\mathbb{R}^d)\otimes \mathcal{S}({\mathcal{M}}) $.    Denote $\Delta=\sum_{l=1}^{d}\partial_{l}^2$, where partial derivative $\partial_{l}$, $1\leq l\leq d$, acting on $f$ is given by $\partial_{l}f=\sum_{k=1}^{n}\partial_{l}f_{k}\otimes m_{k}$. Then the identity 
		\begin{align}\label{24329.2}
			(R_{j})^2(\Delta f)=-\partial_{j}^{2}f
		\end{align}
		holds. Using the Fourier inversion formula together with (\ref{24329.1}),  we have for $t>0$,
		\begin{align*}
			T_{t}^{j}f-f=&\int_{\mathbb{R}^d}(e^{-4\pi^2t\,\xi_{j}^2}-1)\widehat{f}(\xi)e^{2\pi \mathrm{i}\langle  x,\xi \rangle}d\xi\\
			=&-\int_{\mathbb{R}^d}\bigg(\int_{0}^{t}e^{-4\pi^2s\,\xi_{j}^2}4\pi^2 \xi_{j}^2ds\bigg)\widehat{f}(\xi)e^{2\pi \mathrm{i}\langle x,\xi  \rangle}d\xi\\
			=&\int_{0}^{t}T_{s}^{j}(\partial^2_{j}f)ds.
		\end{align*}
		Submitting  (\ref{24329.2}) into the above equality, we get 
		\begin{align*}
			T_{t}^{j}f-f=&-\int_{0}^{t}T_{s}^{j}\big((R_{j})^2(\Delta f)\big)ds\\
			=&-(R_{j})^2\int_{0}^{t}T_{s}^{j}(\Delta f)ds\\
			\triangleq&-(R_{j})^2g_{t},
		\end{align*}
		where
		\begin{align*}
			g_{t}=\int_{0}^{t}T_{s}^{j}(\Delta f)ds.
		\end{align*}
		
		Based on the above argument,   we show that for any $f\in  \mathcal{S}(\mathbb{R}^d)\otimes \mathcal{S}({\mathcal{M}})  $, it is approximated arbitrarily closed  by the sequence $(R_{j})^2g_{t}$ in the $L_{p}(\mathcal{N})$ norm.  In fact, using the conclusion that $\lim_{t\rightarrow\infty}\|T^{j}_{t}g\|_{L_{p}(\mathbb{R}^d)}=0$  for $g\in \mathcal{S}(\mathbb{R}^d)$ in  \cite[Lemma 3.2]{MR4585167}, we obtain 
		\begin{align*}
			\lim_{t\rightarrow\infty}\|(R_{j})^2g_{t}-f\|_{L_{p}(\mathcal{N})}=\lim_{t\rightarrow\infty}\|T^{j}_{t}f\|_{L_{p}(\mathcal{N})}\leq \sum_{k=1}^{n}\lim_{t\rightarrow \infty}\|T^{j}_{t}f_{k}\|_{L_{p}(\mathbb{R}^d)}\|m_{k}\|_{L_{p}(\mathcal{M})}=0.
		\end{align*}
		Noticing that $g_{t}\in    \mathcal{S}(\mathbb{R}^d)\otimes \mathcal{S}({\mathcal{M}})$, which follows from $\int_{0}^{t}T_{s}^{j}(\Delta g)ds\in \mathcal{S}(\mathbb{R}^d)$ for $g\in \mathcal{S}(\mathbb{R}^d)$ (see \cite[Lemma 3.2]{MR4585167}). Then, the density of $\mathcal{S}(\mathbb{R}^d)\otimes \mathcal{S}({\mathcal{M}})$ in $L_{p}(\mathcal{N})$  completes this proof.	
	\end{proof}

	\begin{corollary}\label{24328.5}
		Let $j=1,2,\cdots,d$. Then, for any $t>0$, the $j$-th truncated Riesz transform can be factorized as 
		\begin{align}\label{24328.2}
			R_{j}^{t}f=A^{t}(R_{j}f),~~~f\in L_{2}(\mathcal{N}).
		\end{align}
		Moreover, for $1<p<\infty$, the  maximal operator for the sequence $(A^{t})_{t>0}$ is bounded from $L_{p}(\mathcal{N})$ to  $L_{p}( \mathcal{N};l_{\infty}(\mathbb{R}_{+}))$,    and the optimal constant $C_{d,p}$ in the inequality 
		\begin{align*}
			\|\sup_{t>0}R_{j}^{t}f\|_{L_{p}(\mathcal{N})}\leq C_{d,p}\|R_{j}f\|_{L_{p}(\mathcal{N})}
		\end{align*}
		is equal to the optimal constant $D_{d,p}$ in the inequality
		\begin{align*}
			\|\sup_{t>0}A^{t}f\|_{L_{p}(\mathcal{N})}\leq D_{d,p}\|f\|_{L_{p}(\mathcal{N})}.
		\end{align*}
	\end{corollary}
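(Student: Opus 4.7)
The plan is to establish the factorization on $L_2(\mathcal{N})$ by Fourier multipliers, and then to deduce the equality of the optimal constants via Lemma \ref{24328.1}. All three operators $R_j$, $R_j^t$, and $A^t$ are bounded Fourier multipliers on $L_2(\mathcal{N})$ by the Plancherel identity in the semi-commutative setting. Reading off the symbols from (\ref{24329.4}) and (\ref{24329.3}), the multiplier of $R_j^t$ equals that of $A^t$ times that of $R_j$, so multiplicativity of the Fourier transform immediately yields $R_j^t f = A^t(R_j f)$ for every $f \in L_2(\mathcal{N})$, which is exactly (\ref{24328.2}).

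To obtain the maximal inequality for $(A^t)_{t>0}$ on $L_p(\mathcal{N})$, I would combine (\ref{24328.2}) with Proposition \ref{24328.3}: for any $f \in L_p(\mathcal{N}) \cap L_2(\mathcal{N})$,
\[
\|\sup_{t>0} A^t(R_j f)\|_{L_p(\mathcal{N})} = \|\sup_{t>0} R_j^t f\|_{L_p(\mathcal{N})} \leq C_{d,p}\|R_j f\|_{L_p(\mathcal{N})}.
\]
By Lemma \ref{24328.1}, the subset $R_j(L_p(\mathcal{N})) \cap (\mathcal{S}(\mathbb{R}^d) \otimes \mathcal{S}(\mathcal{M}))$ is dense in $L_p(\mathcal{N})$, and its elements are $R_j f$ with $f$ itself Schwartz-valued, hence in $L_p \cap L_2$. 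Thus the bound $\|\sup_{t>0} A^t g\|_{L_p} \leq C_{d,p}\|g\|_{L_p}$ holds on a dense subspace. Using the finite-supremum characterization (\ref{0118.1}) to reduce to bounded linear extensions into the Banach spaces $L_p(\mathcal{N}; l_\infty(J))$ for finite $J \subset \mathbb{R}_+$, a standard continuity argument extends this uniform bound to all of $L_p(\mathcal{N})$. This proves $D_{d,p} \leq C_{d,p}$.

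For the reverse inequality, suppose $\|\sup_{t>0} A^t g\|_{L_p(\mathcal{N})} \leq D_{d,p}\|g\|_{L_p(\mathcal{N})}$ holds for all $g \in L_p(\mathcal{N})$. Substituting $g = R_j f$ for $f \in L_p(\mathcal{N}) \cap L_2(\mathcal{N})$ and applying (\ref{24328.2}) gives
\[
\|\sup_{t>0} R_j^t f\|_{L_p(\mathcal{N})} = \|\sup_{t>0} A^t(R_j f)\|_{L_p(\mathcal{N})} \leq D_{d,p}\|R_j f\|_{L_p(\mathcal{N})},
\]
first for $f \in L_p \cap L_2$ and then, by density of $L_p \cap L_2$ in $L_p(\mathcal{N})$, for all $f \in L_p(\mathcal{N})$. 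Hence $C_{d,p} \leq D_{d,p}$ and the two optimal constants coincide.

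The main technical point I expect is the continuity extension of $(A^t)_{t>0}$ from a dense subset of $L_p(\mathcal{N})$ to the full space while preserving the noncommutative $l_\infty$-maximal bound; the supremum here is only a norm, not a pointwise quantity, so Lemma \ref{24328.1} together with the finite-subset formulation (\ref{0118.1}) is indispensable in reducing the extension to standard Banach space arguments.
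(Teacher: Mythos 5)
Your proposal is correct and follows essentially the same route as the paper: the factorization is read off the multiplier symbols, and the identity $C_{d,p}=D_{d,p}$ is deduced from Lemma \ref{24328.1} (density of $R_j(L_p(\mathcal{N}))$) together with the factorization. The paper compresses your two one-sided inequalities into a single chain of equalities for the optimizing suprema, but the underlying argument is the same.
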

	\begin{proof}
		The factorization (\ref{24328.2}) follows directly from (\ref{24329.4}) and (\ref{24329.3}). 	
		
		Proposition  \ref{24328.3} and Lemma \ref{24328.1} imply that the  maximal operator  for the sequence  $(A^{t})_{t>0}$ is bounded from $L_{p}(\mathcal{N})$ to  $L_{p}( \mathcal{N};l_{\infty}(\mathbb{R}_{+}))$. Now, we turn our attention to proving $C_{d,p}=D_{d,p}$. Since Lemma \ref{24328.1} shows that $R_{j}\big(L_{p}(\mathcal{N})\big)$ is dense in $L_{p}(\mathcal{N})$, it follows 
		that
		\begin{align*}
			D_{d,p}=&\sup_{f=R_{j}h\in  R_{j} (L_{p}(\mathcal{N})) }\frac{\|\sup_{t>0}A^{t}f\|_{L_{p}(\mathcal{N})}}{\|f\|_{L_{p}(\mathcal{N})}}\\
			=&\sup_{ R_{j}h\in  R_{j} (L_{p}(\mathcal{N}))  }\frac{\|\sup_{t>0}A^{t}R_{j}h\|_{L_{p}(\mathcal{N})}}{\|R_{j}h\|_{L_{p}(\mathcal{N})}}\\
			=&\sup_{R_{j}h\in  R_{j} (L_{p}(\mathcal{N})) }\frac{\|\sup_{t>0}R_{j}^{t}h\|_{L_{p}(\mathcal{N})}}{\|R_{j}h\|_{L_{p}(\mathcal{N})}}=C_{d,p},
		\end{align*}
		where we assume that $f=R_{j}h$ for a function  $h\in L_{p}(\mathcal{N})$.
	\end{proof}

	We also need the following proposition   to prove   (\ref{24327.1}) in Proposition \ref{24328.3}. For $t>0$, we denote $K_{t}(x)=t^{-d}K(\frac{x}{t})$.
	\begin{proposition}\label{24325.3}
		Let $K\in L_{1}({\mathbb{R}^d})$ and   $f\in L_{p}(\mathcal{N})$  for $1<p\leq\infty$.  Then, we have
		\begin{align*}
			\bigg\|\sup_{r>0}\frac{1}{r}\int_{0}^{r}K_{t}*fdt\bigg\|_{L_{p}(\mathcal{N})}\leq 4C_{p}\|K\|_{ L_{1}({\mathbb{R}^d})}\|f\|_{L_{p}(\mathcal{N})}.
		\end{align*}
	\end{proposition}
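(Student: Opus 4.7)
\textbf{Proof proposal for Proposition \ref{24325.3}.}
The plan is to reduce the estimate for the Ces\`aro averages $\frac{1}{r}\int_0^r K_t*f\, dt$ to the one-dimensional noncommutative Hardy--Littlewood maximal inequality via the change-of-variables $y=tz$ in the convolution $K_t*f$. First decompose $K=K^{1,+}-K^{1,-}+\mathrm{i}(K^{2,+}-K^{2,-})$ into four nonnegative $L_1$ functions, each of norm at most $\|K\|_{L_1}$, and write $f=\Re(f)+\mathrm{i}\Im(f)$ with each piece self-adjoint of $L_p$-norm at most $\|f\|_{L_p(\mathcal N)}$. By the triangle inequality for the $L_p(\mathcal N;l_\infty(\mathbb R_+))$ norm, it suffices to establish, for a nonnegative $K_0\in L_1(\mathbb R^d)$ and a self-adjoint $f\in L_p(\mathcal N)$, a bound of the form $\|\sup_{r>0}\frac{1}{r}\int_0^r K_{0,t}*f\, dt\|_{L_p(\mathcal N)}\le C_p\|K_0\|_{L_1}\|f\|_{L_p(\mathcal N)}$; summing over the four pieces of $K$ and the two pieces of $f$ yields the factor $4$.

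The key identity is obtained by substituting $z=y/t$ in the convolution and then $s=t|z|$ in the Ces\`aro integral:
\begin{align*}
\frac{1}{r}\int_0^r K_{0,t}*f(x)\, dt
=\int_{\mathbb R^d} K_0(z)\,\frac{1}{r}\int_0^r f(x-tz)\, dt\, dz
=\int_{\mathbb R^d} K_0(z)\, B_{r|z|}^{\hat z}f(x)\, dz,
\end{align*}
where $\hat z=z/|z|$ and $B_R^{\hat z}f(x):=\frac{1}{R}\int_0^R f(x-s\hat z)\, ds$ is the one-dimensional Hardy--Littlewood average of $f$ along direction $\hat z$. The point is that for each fixed direction $\hat z$, the family $(B_R^{\hat z})_{R>0}$ reduces, via Fubini on $\mathbb R^d=\mathbb R\hat z\oplus\hat z^{\perp}$, to the one-dimensional noncommutative Hardy--Littlewood maximal operator acting on $L_p(\mathbb R;L_p(\mathcal M))$ fiber-wise in the orthogonal variable. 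Hence for self-adjoint $f$ there exists a positive $G_{\hat z}\in L_p(\mathcal N)_+$ with $-G_{\hat z}\le B_R^{\hat z}f\le G_{\hat z}$ for all $R>0$ and $\|G_{\hat z}\|_{L_p(\mathcal N)}\le C_p\|f\|_{L_p(\mathcal N)}$, with $C_p$ independent of $d$ and of the direction $\hat z$.

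Define $F(x):=\int_{\mathbb R^d} K_0(z)\, G_{\hat z}(x)\, dz\in L_p(\mathcal N)_+$. Integrating the two-sided operator-order bound against $K_0(z)\ge 0$ gives $-F\le \frac{1}{r}\int_0^r K_{0,t}*f\, dt\le F$ uniformly in $r>0$, and by the operator-valued Minkowski/H\"older inequality (\ref{3802}) together with rotational invariance of the 1D bound,
\begin{align*}
\|F\|_{L_p(\mathcal N)}\le \int_{\mathbb R^d}K_0(z)\, \|G_{\hat z}\|_{L_p(\mathcal N)}\, dz\le C_p\|K_0\|_{L_1(\mathbb R^d)}\|f\|_{L_p(\mathcal N)}.
\end{align*}
The characterization (\ref{24829.2}) of the $L_p(\mathcal N;l_\infty(\mathbb R_+))$ norm then converts this operator-order domination into the desired maximal estimate, and summing the four pieces gives the stated constant $4C_p$.

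\textbf{Main obstacle.} The scalar outline above is essentially classical; the delicate point is the noncommutative handling of the supremum. One cannot take pointwise suprema in $\mathcal N$, so the entire argument must produce a \emph{single} positive operator $F$ dominating the whole family $(\frac{1}{r}\int_0^r K_{0,t}*f\, dt)_{r>0}$ in the operator order. This forces two technical steps that need care: (i) extracting, \emph{uniformly in the direction $\hat z$}, the noncommutative Doob-type dominator $G_{\hat z}$ from the 1D HL maximal inequality in a way measurable in $\hat z$, so that the integral $\int K_0(z) G_{\hat z}(x)\, dz$ makes sense as an element of $L_p(\mathcal N)_+$; and (ii) invoking the operator-valued Minkowski inequality of (\ref{3802}) to interchange the integral over $z$ with the $L_p(\mathcal N)$ norm while preserving the two-sided operator bound, which is exactly what \eqref{24829.2} is designed to handle.
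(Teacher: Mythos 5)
Your proposal follows essentially the same route as the paper's proof: both rewrite $\tfrac1r\int_0^r K_t * f\,dt$ via the change of variable $y \mapsto tz$ as an integral over directions of one-dimensional directional Hardy--Littlewood averages $\tfrac1r\int_0^r f(x-tz)\,dt$, both invoke the one-dimensional noncommutative maximal inequality uniformly in direction (the paper cites it as \cite[Lemma 6.3]{MR4585152}, which you re-derive by Fubini along lines), and both convert the resulting two-sided operator domination into a maximal-norm bound via (\ref{24829.2}). The only difference is bookkeeping for signs: the paper reduces to positive $f$ and decomposes $f$ into four positive parts, while you additionally split $K$ into four nonnegative pieces and $f$ into two self-adjoint pieces; both accountings land on the stated constant $4C_p$.
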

	\begin{proof}
		We   assume $f\in L_{p}(\mathcal{N})$ is positive and calculate 
		\begin{align*}
			\frac{1}{r}\int_{0}^{r}K_{t}*fdt=\int_{\mathbb{R}^d}K(y)\bigg(\frac{1}{r}\int_{0}^{r}f(x-ty)dt\bigg)dy.
		\end{align*}
		Here $\frac{1}{r}\int_{0}^{r}f(x-ty)dt$ is the directional Hardy-Littlewood integral operator associated with the direction $y$ in the semi-commutative case. It was shown in \cite[Lemma 6.3]{MR4585152} that,
		\begin{align}\label{24065.7}
			\bigg\|\sup_{r>0}\frac{1}{r}\int_{0}^{r}f(x-ty)dt\bigg\|_{L_{p}(\mathcal{N})}\leq C_{p}\|f\|_{L_{p}(\mathcal{N})},~~1<p\leq\infty,
		\end{align}
		where $C_{p}$ is the $L_{p}$ norm of the one-dimensional semi-commutative Hardy-Littlewood operator.
		Then, applying (\ref{24829.2}) and (\ref{24065.7}), we obtain 
		\begin{align*}
			\bigg\|\sup_{r>0}\frac{1}{r}\int_{0}^{r}K_{t}*fdt\bigg\|_{L_{p}(\mathcal{N})}=&\bigg\|\sup_{r>0}\int_{\mathbb{R}^d}K(y)\bigg(\frac{1}{r}\int_{0}^{r}f(x-ty)dt\bigg)dy\bigg\|\\
			\leq& \int_{\mathbb{R}^d}|K(y)|\,\bigg\|\sup_{r>0}\frac{1}{r}\int_{0}^{r}f(x-ty)dt\bigg\|_{L_{p}(\mathcal{N})}dy
			\\
			\leq& C_{p}\|K\|_{ L_{1}({\mathbb{R}^d})}\|f\|_{L_{p}(\mathcal{N})}.
		\end{align*}
		
		For  general $f\in L_{p}(\mathcal{N})$, by decomposing it into the linear combination of four positive parts, this proposition is proved.
	\end{proof}
	\begin{remark}\label{240620.1}
		Combining  Lemma 3.1 in \cite{MR1916654} and Theorem 3.3 in \cite{MR2327840},   for positive vector-valued function, the  $L_{2}$ norm of the one-dimensional  semi-commutative Hardy-Littlewood operator is 24. Then, we obtain
		\begin{align*}
			\bigg\|\sup_{r>0}\frac{1}{r}\int_{0}^{r}K_{t}*fdt\bigg\|_{L_{2}(\mathcal{N})}\leq 96\|K\|_{ L_{1}({\mathbb{R}^d})}\|f\|_{L_{2}(\mathcal{N})},~~~f\in L_{2}(\mathcal{N}).
		\end{align*}
	\end{remark}

	Recall that for $f\in \mathcal{S}(\mathbb{R}^d)\otimes\mathcal{S}(\mathcal{M})$, the operator  $A^{t}$ defined in (\ref{24329.3}) can be written   as the   convolution operator, i.e.,
	\begin{align}\label{25014.6}
		A^{t}f(x)=\int_{\mathbb{R}^d}m(t|\xi|)\widehat{f}(\xi)e^{2\pi \mathrm{i} \langle   x,\xi\rangle}d\xi=\phi_{t}*f(x),
	\end{align}
	where 
	\begin{align}\label{24919.1}
		\phi_{t}(x)=\int_{\mathbb{R}^d}m(t|\xi|)e^{2\pi \mathrm{i} \langle  x,\xi \rangle}d\xi.
	\end{align}
	Some   calculations for $\phi_{t}$ (see  \cite[Theorem 2 \& Theorem 4]{liu2023p}) are used in the proof of (\ref{24327.1}), and we present  them in the following lemma.
	\begin{lemma}\label{241212.4}
		Let $\phi_{t}$ be the   function given in $(\ref{24919.1})$, then 	$\|\phi_{t}\|_{ L_{1}({\mathbb{R}^d})}=1$.  Moreover, we have 
		\begin{align*}
			\sup_{\xi\in\mathbb{R}^d}\int_{0}^{\infty}\bigg|s\frac{d}{ds}\widehat{\phi_{s}}(\xi)\bigg|^2\frac{ds}{s} \leq\frac{1}{2}.
		\end{align*}  
	\end{lemma}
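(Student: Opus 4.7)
For both assertions I would reduce to explicit one-dimensional computations with the function $m$ from $(\ref{240612.1})$, along the lines of \cite[Theorem 2 \& Theorem 4]{liu2023p}. For the first claim, the dilation identity $\phi_t(x)=t^{-d}\phi_1(x/t)$---immediate from $\widehat{\phi_t}(\xi)=m(t|\xi|)$---reduces the problem to $\|\phi_1\|_{L_1(\mathbb{R}^d)}=1$. The essential nontrivial input is that $\phi_1$ is nonnegative, which is the classical Mateu--Verdera mollifier identity originating in \cite{MR2280788}. Given $\phi_1\ge 0$, one has
\begin{equation*}
\|\phi_1\|_{L_1(\mathbb{R}^d)}=\int_{\mathbb{R}^d}\phi_1(x)\,dx=\widehat{\phi_1}(0)=m(0),
\end{equation*}
and $m(0)=1$ follows from a direct Fourier-inversion computation applied to the even profile $(1-s^2)^{(d-1)/2}\chi_{[-1,1]}(s)$: setting $g(r)=\int_{-1}^{1}e^{irs}(1-s^2)^{(d-1)/2}ds$, one has $\int_{-\infty}^{\infty} g(r)\,dr=2\pi$, and evenness of $g$ gives $m(0)=\frac{1}{\pi}\int_0^\infty g(r)\,dr=1$.

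For the second claim, I would exploit that $\widehat{\phi_s}(\xi)=m(s|\xi|)$ depends only on the product $s|\xi|$. Hence $s\frac{d}{ds}\widehat{\phi_s}(\xi)=s|\xi|\,m'(s|\xi|)$, and the substitution $u=s|\xi|$ (so that $ds/s=du/u$) converts
\begin{equation*}
\int_0^\infty\Bigl|s\tfrac{d}{ds}\widehat{\phi_s}(\xi)\Bigr|^2\frac{ds}{s}\ =\ \int_0^\infty u\,|m'(u)|^2\,du,
\end{equation*}
which is manifestly independent of $\xi$, so the supremum in $\xi$ is automatic. Differentiating $(\ref{240612.1})$ under the integral sign yields $m'(u)=-2\int_{-1}^{1}e^{2\pi ius}(1-s^2)^{(d-1)/2}\,ds$, i.e.\ (up to constants) a Bessel-type expression in $u$.

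The remaining step, which I expect to be the main technical obstacle, is the sharp numerical bound $\int_0^\infty u|m'(u)|^2\,du\le\frac{1}{2}$. I would recognize $m'(u)$ as a constant multiple of the Fourier transform of the compactly supported even profile $\psi(s)=(1-s^2)^{(d-1)/2}\chi_{[-1,1]}(s)$, and then use a one-dimensional Plancherel identity, or equivalently integration-by-parts, to rewrite $\int_0^\infty u|m'(u)|^2\,du$ as a spatial-side quantity involving $\psi$ and $\psi'$. The resulting expression can be evaluated exactly using standard Beta-function identities for $\int_{-1}^{1}(1-s^2)^{d-1}\,ds$ and related integrals, and the sharp constant $\frac{1}{2}$ emerges from this calculation; the full details appear in \cite[Theorem 4]{liu2023p}.
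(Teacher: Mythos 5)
The paper gives no proof of this lemma: it is stated as a direct quotation of results from Liu, Melentijevi\'c, and Zhu \cite[Theorems 2 and 4]{liu2023p}, and the authors of the present paper do not reproduce the argument. Your sketch is therefore being compared not against an internal proof, but against that reference, and in outline it is faithful to it: the dilation identity $\phi_t(x)=t^{-d}\phi_1(x/t)$, the reduction via the scaling invariance of $\widehat{\phi_s}(\xi)=m(s|\xi|)$ to the $\xi$-independent quantity $\int_0^\infty u\,|m'(u)|^2\,du$, and the explicit formula $m'(u)=-2\int_{-1}^1 e^{2\pi i us}(1-s^2)^{(d-1)/2}\,ds$ are all correct and are exactly the right first moves. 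The observation that $\|\phi_1\|_{L^1}=1$ is equivalent to $\phi_1\ge 0$ (given $\widehat{\phi_1}(0)=m(0)=1$, which your Fourier-inversion computation correctly verifies) is also the right way to see where the real content lies.

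Two points, however, are either imprecise or should be flagged as unverified. First, you attribute the nonnegativity of $\phi_1$ to Mateu--Verdera \cite{MR2280788}; what \cite[Lemma 4]{MR2280788} actually establishes is the factorization $K_j^1=R_jh$ with $h\in L^q(2B)$ and $|h(x)|\lesssim |x|^{-d-1}$ for $|x|\ge 2$, and I do not believe that positivity of $h$ (equivalently of $\phi_1$) is explicitly recorded there; it is precisely the point of \cite[Theorem 2]{liu2023p}. One should double-check this attribution before relying on it. Second, the closing claim that the bound $\int_0^\infty u\,|m'(u)|^2\,du\le \tfrac12$ follows by ``Plancherel, or equivalently integration-by-parts, yielding a spatial-side quantity involving $\psi$ and $\psi'$'' is misleading: because the weight $u$ is a \emph{half}-order power, the quantity in question is the $\dot H^{1/2}$ seminorm of the profile $\psi(s)=(1-s^2)^{(d-1)/2}\chi_{[-1,1]}(s)$, not an $L^2$ norm of $\psi$ or $\psi'$, and integration by parts alone does not produce it. The actual evaluation goes through the Bessel-function representation $m'(u)=-2\sqrt{\pi}\,\Gamma(\tfrac{d+1}{2})(\pi u)^{-d/2}J_{d/2}(2\pi u)$ and the Weber--Schafheitlin formula, which gives
\begin{equation*}
\int_0^\infty u\,|m'(u)|^2\,du \;=\; \frac{2\,\Gamma\!\big(\tfrac{d+1}{2}\big)^2}{\pi\,(d-1)\,\Gamma\!\big(\tfrac{d}{2}\big)^2},
\end{equation*}
a quantity equal to $\tfrac12$ at $d=2$ and decreasing in $d$ towards $1/\pi$, which is where the uniform bound $\tfrac12$ comes from. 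So the gap you flag as ``the main technical obstacle'' is real, and its resolution is a special-function identity rather than a Plancherel argument, but your deferral of those details to \cite[Theorem 4]{liu2023p} matches exactly what the present paper does.
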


	Now, we are ready to prove (\ref{24327.1}) in  Proposition \ref{24328.3}.
	
	\vspace{0.2cm}
	
	\noindent\textbf{Proof of Proposition \ref{24328.3}: (\ref{24327.1}).} Corollary \ref{24328.5} implies that it is enough to show
	\begin{align}\label{240625.3}
		\|\sup_{t>0}A^{t}f\|_{L_{p}(\mathcal{N})}\leq (96+2\sqrt{2})^{\frac{2}{p}}\|f\|_{L_{p}(\mathcal{N})},~~~2\leq p<\infty.
	\end{align}
	To prove (\ref{240625.3}), we first show the case of $p=2$, i.e.,
	\begin{align}\label{240625.1}
		\|\sup_{t>0}A^{t}f\|_{L_{2}(\mathcal{N})}\leq  (96+2\sqrt{2} ) \|f\|_{L_{2}(\mathcal{N})}.
	\end{align}
	In the following, we assume that   $f\in \mathcal{S}(\mathbb{R}^d)\otimes\mathcal{S}(\mathcal{M})$.
	Applying integration by parts, we have
	\begin{align*}
		\phi_{t}*f=\frac{1}{t}\int_{0}^{t}\phi_{s}*fds+\frac{1}{t}\int_{0}^{t}s\frac{d}{ds}\phi_{s}*fds. 
	\end{align*}
	Then,   (\ref{25014.6}) and the triangle inequality imply that
	\begin{align}\label{24065.8}
		\|\sup_{t>0}A^{t}f\|_{L_{2}(\mathcal{N})}
		\leq \bigg\|\sup_{t>0}\frac{1}{t}\int_{0}^{t}\phi_{s}*fds \bigg\|_{L_{2}(\mathcal{N})}+ \,\bigg\|\sup_{t>0} \frac{1}{t}\int_{0}^{t} s\frac{d}{ds}\phi_{s}*f ds\bigg\|_{L_{2}(\mathcal{N})}.
	\end{align}
	
	By Remark \ref{240620.1}  and Lemma \ref{241212.4}, the first term on the right-hand side of (\ref{24065.8}) is controlled  by $96\|f\|_{L_{2}(\mathcal{N})}$, i.e.,
	\begin{align}\label{240621.1}
		\bigg\|\sup_{t>0}\frac{1}{t}\int_{0}^{t}\phi_{s}*fds\bigg\|_{L_{2}(\mathcal{N})}\leq 96\|f\|_{L_{2}(\mathcal{N})}.
	\end{align}
	
	Below, decomposing $s\frac{d}{ds}\phi_{s}*f$ into the linear combination of four positive parts and using the triangle inequality, the second term on the right-hand side of (\ref{24065.8}) is controlled by the sum of the following four terms:
	\begin{align*}
		\uppercase\expandafter{\romannumeral1}_{5}=\bigg\|\sup_{t>0} \frac{1}{t}\int_{0}^{t}\Re^{+}\bigg(s\frac{d}{ds}\phi_{s}*f\bigg)ds\bigg\|_{L_{2}(\mathcal{N})},\\
		\uppercase\expandafter{\romannumeral1}_{6}=\bigg\|\sup_{t>0} \frac{1}{t}\int_{0}^{t}\Re^{-}\bigg(s\frac{d}{ds}\phi_{s}*f\bigg)ds\bigg\|_{L_{2}(\mathcal{N})},\\
		\uppercase\expandafter{\romannumeral1}_{7}=\bigg\|\sup_{t>0} \frac{1}{t}\int_{0}^{t}\Im^{+}\bigg(s\frac{d}{ds}\phi_{s}*f\bigg) ds\bigg\|_{L_{2}(\mathcal{N})},\\
		\uppercase\expandafter{\romannumeral1}_{8}=\bigg\|\sup_{t>0} \frac{1}{t}\int_{0}^{t}\Im^{-}\bigg(s\frac{d}{ds}\phi_{s}*f\bigg)ds\bigg\|_{L_{2}(\mathcal{N})}.
	\end{align*}
	We first deal with $\uppercase\expandafter{\romannumeral1}_{5}$. By the H\"{o}lder inequality (\ref{3802}), we have
	\begin{align}\label{24065.10}
		\frac{1}{t}\int_{0}^{t}\Re^{+}\bigg(s\frac{d}{ds}\phi_{s}*f\bigg)ds\leq& \bigg(\frac{1}{t}\int_{0}^{t}\bigg(\Re^{+}\bigg(s\frac{d}{ds}\phi_{s}*f\bigg)\bigg)^{2}ds\bigg)^{\frac{1}{2}}\nonumber\\
		\leq&\bigg(\int_{0}^{\infty}\bigg(\Re^{+}\bigg(s\frac{d}{ds}\phi_{s}*f\bigg)\bigg)^{2}\frac{ds}{s}\bigg)^{\frac{1}{2}}.
	\end{align}
	Combining   (\ref{24829.2}) with (\ref{24065.10}), it is obvious to get 
	\begin{align*}
		\uppercase\expandafter{\romannumeral1}_{5}
		\leq\bigg(\int_{0}^{\infty}\bigg\|\Re^{+}\bigg(s\frac{d}{ds}\phi_{s}*f\bigg)\bigg\|^2_{_{L_{2}(\mathcal{N})}}\frac{ds}{s}\bigg)^{\frac{1}{2}}
		\leq\bigg(\int_{0}^{\infty}\bigg\|s\frac{d}{ds}\phi_{s}*f\bigg\|^2_{_{L_{2}(\mathcal{N})}}\frac{ds}{s}\bigg)^{\frac{1}{2}}.
	\end{align*}
	Then, using  the Plancherel formula   and Lemma \ref{241212.4}, $\uppercase\expandafter{\romannumeral1}_{5}$ is bounded by 
	\begin{align*}
		\bigg(\sup_{\xi\in\mathbb{R}^d}\int_{0}^{\infty}\bigg|s\frac{d}{ds}\widehat{\phi_{s}}(\xi)\bigg|^2\frac{ds}{s}\bigg)^{\frac{1}{2}}\|f\|_{L_{2}(\mathcal{N})}
		\leq\frac{\sqrt{2}}{2}\|f\|_{L_{2}(\mathcal{N})}.
	\end{align*}
	The same method also works for other three terms $\uppercase\expandafter{\romannumeral1}_{6}$, $\uppercase\expandafter{\romannumeral1}_{7}$, and $\uppercase\expandafter{\romannumeral1}_{8}$, which leads to
	\begin{align}\label{24065.11}
		\bigg\|\sup_{t>0}\frac{1}{t}\int_{0}^{t} s\frac{d}{ds}\phi_{s}*f ds\bigg\|_{L_{2}(\mathcal{N})}\leq 2\sqrt{2}\|f\|_{L_{2}(\mathcal{N})}.
	\end{align}
	Finally,  combining (\ref{24065.8}), (\ref{240621.1})  and (\ref{24065.11}), we prove (\ref{240625.1}).
	
	On other hand,  Lemma \ref{241212.4} and (\ref{25014.6}) imply that 
	\begin{align}\label{240625.2}
		\|\sup_{t>0}A^{t}f(x)\|_{L_{\infty}(\mathcal{N})}=\sup_{t>0}\|\phi_{t}*f(x)\|_{L_{\infty}(\mathcal{N})}\leq \|f\|_{L_{\infty}(\mathcal{N})}.
	\end{align}
	Finally, taking an interpolation between    (\ref{240625.1}) and (\ref{240625.2}),  (\ref{240625.3}) is obtained;  we complete the proof. $\hfill\square$
	
	\vspace{0.2cm}
	
	We are ready to prove Proposition \ref{241118.1} via the following transference principle (see \cite[Theorem 5.3]{1}), which reduces the maximal inequalities on $\mathbb{T}^d$ to the corresponding ones on $\mathbb{R}^d$.

	\begin{lemma}\label{241113.1}
		Let $1<p<\infty$ and $b$ be a bounded continuous function on $\mathbb{R}^d$. For $t>0$,   the multiplier operators
		\begin{align*}
			\tilde{S}_{b,t} f (x)=\sum_{n\in\mathbb{Z}^d}b(tn)\widehat{f}(n)e^{2\pi\mathrm{i} \langle n,x  \rangle} \quad\mbox{and}\quad S_{b,t} g (x)=\int_{\mathbb{R}^d}b(t\xi)\widehat{g}(\xi)e^{2\pi\mathrm{i} \langle   \xi,x\rangle}d\xi
		\end{align*}
		are defined on operator-valued function $f\in  C^{\infty}(\mathbb{T}^d)\otimes \mathcal{S}(\mathcal{ {M}}) $ and $g\in  C_{c}^{\infty}(\mathbb{R}^d)\otimes \mathcal{S}(\mathcal{ {M}})$.
		Suppose that there exists a constant $C$ such that 
		\begin{align}\label{241126.4}
			\|\sup_{t>0}S_{b,t} g \|_{L_{p}( \mathcal{N})}\leq C\|g\|_{L_{p}( \mathcal{N})},~~~g\in   C_{c}^{\infty}(\mathbb{R}^d)\otimes \mathcal{S}(\mathcal{ {M}}).
		\end{align}
		Then for any  $ f\in  C^{\infty}(\mathbb{T}^d)\otimes \mathcal{S}(\mathcal{ {M}}) $, the following  estimate   
		\begin{align*}
			\|\sup_{t>0}\tilde{S}_{b,t}f\|_{L_{p}(L_{\infty}(\mathbb{T}^d)\overline{\otimes}\mathcal{M})}\leq C\|f\|_{L_{p}(L_{\infty}(\mathbb{T}^d)\overline{\otimes}\mathcal{M})}  
		\end{align*}
		holds with the same constant  $C$ as in $(\ref{241126.4})$.
	\end{lemma}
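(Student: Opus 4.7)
The proof follows Calder\'on's transference principle adapted to the operator-valued setting. The idea is to realize each trigonometric polynomial $f$ on $\mathbb{T}^d$ as a limit of compactly supported test functions on $\mathbb{R}^d$, apply the hypothesis (\ref{241126.4}) to these, and recover the $\mathbb{T}^d$-estimate by letting the localization window expand.

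By density I may assume $f(x) = \sum_{n \in F} a_n e^{2\pi \mathrm{i} \langle n, x\rangle}$ with $F \subset \mathbb{Z}^d$ finite and $a_n \in \mathcal{S}(\mathcal{M})$. Fix a nonnegative $\phi \in C_c^\infty(\mathbb{R}^d)$ with $\phi(0)=1$, and for $\lambda > 0$ set $g_\lambda(x) = \phi(x/\lambda) f(x) \in C_c^\infty(\mathbb{R}^d)\otimes \mathcal{S}(\mathcal{M})$. Writing $\widehat{g_\lambda}(\xi) = \lambda^d \sum_n a_n \widehat{\phi}(\lambda(\xi-n))$ and substituting $\xi = n + \eta/\lambda$ in the definition of $S_{b,t}$ yields
\begin{align*}
S_{b,t} g_\lambda(x) = \sum_{n \in F} a_n e^{2\pi \mathrm{i} \langle n, x\rangle} \int_{\mathbb{R}^d} b\bigl(t(n+\eta/\lambda)\bigr)\widehat{\phi}(\eta) e^{2\pi \mathrm{i} \langle \eta/\lambda, x\rangle}\,d\eta.
\end{align*}
Continuity and boundedness of $b$, together with $\int \widehat{\phi} = \phi(0) = 1$ and dominated convergence, give the pointwise convergence $S_{b,t}g_\lambda(x) \to \tilde{S}_{b,t}f(x)$ as $\lambda \to \infty$, uniformly on compact subsets of $\mathbb{R}^d \times \mathbb{R}_+$.

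The heart of the argument is a pair of norm asymptotics exploiting the periodicity of $f$ and $\tilde{S}_{b,t}f$. A Riemann-sum / equidistribution computation applied to the scalar periodic function $x \mapsto \tau(|f(x)|^p)$ gives
\begin{align*}
\lim_{\lambda\to\infty} \lambda^{-d/p}\,\|g_\lambda\|_{L_p(\mathcal{N})} = \|\phi\|_{L_p(\mathbb{R}^d)}\,\|f\|_{L_p(L_\infty(\mathbb{T}^d)\overline{\otimes}\mathcal{M})},
\end{align*}
and analogously for the maximal norm one aims at
\begin{align*}
\liminf_{\lambda\to\infty} \lambda^{-d/p}\,\bigl\|\sup_{t>0}S_{b,t}g_\lambda\bigr\|_{L_p(\mathcal{N})} \geq \|\phi\|_{L_p(\mathbb{R}^d)}\,\bigl\|\sup_{t>0}\tilde{S}_{b,t}f\bigr\|_{L_p(L_\infty(\mathbb{T}^d)\overline{\otimes}\mathcal{M})}.
\end{align*}
Combining these with (\ref{241126.4}) applied to $g_\lambda$ and cancelling $\|\phi\|_{L_p(\mathbb{R}^d)}$ yields the conclusion with the same constant $C$.

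The main obstacle is the lower-semicontinuity estimate for the noncommutative maximal norm, since $L_p(\mathcal{N};\,l_\infty(\mathbb{R}_+))$ has no direct pointwise description. I would handle this through the finite-index characterization (\ref{0118.1}): for each finite $J \subset \mathbb{R}_+$, the pointwise convergence $S_{b,t}g_\lambda \to \tilde{S}_{b,t}f$ for $t \in J$, combined with the factorization description of $L_p(\mathcal{M};\,l_\infty(J))$ in Definition \ref{24926.1} and standard Fatou-type arguments, yields the inequality with the $\sup$ restricted to $J$; passing to the supremum over all finite $J$ then gives the $\mathbb{R}_+$ version. A secondary, easier point is bounding the boundary contribution in the first norm asymptotic, which is $o(\lambda^{d/p})$ by the smoothness of $\phi$ and the periodicity of $f$.
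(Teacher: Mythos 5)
The dilation--localization skeleton you describe (approximate $f$ by $g_\lambda=\phi(\cdot/\lambda)f$, apply the $\mathbb{R}^d$ hypothesis to $g_\lambda$, show $\lambda^{-d/p}\|g_\lambda\|_{L_p(\mathcal N)}\to\|\phi\|_p\|f\|_{L_p(\mathbb{T}^d;\mathcal M)}$, and pass to the limit) is indeed the classical route behind Grafakos' Theorem 4.3.12, which the paper invokes. Your pointwise-convergence computation for $S_{b,t}g_\lambda$ and your Riemann-sum norm asymptotic are fine.

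The genuine gap is in what you call ``the heart of the argument'': the inequality
\begin{align*}
\liminf_{\lambda\to\infty}\lambda^{-d/p}\bigl\|\sup_{t>0}S_{b,t}g_\lambda\bigr\|_{L_p(\mathcal N)}\;\geq\;\|\phi\|_{L_p(\mathbb R^d)}\,\bigl\|\sup_{t>0}\tilde S_{b,t}f\bigr\|_{L_p(L_\infty(\mathbb T^d)\overline\otimes\mathcal M)} .
\end{align*}
You propose to obtain it from the finite-index characterization \eqref{0118.1}, the factorization of Definition~\ref{24926.1}, and ``standard Fatou-type arguments.'' But there is no off-the-shelf Fatou lemma for the noncommutative $l_\infty$ norm: Definition~\ref{24926.1} gives an \emph{infimum} over factorizations $x_t=a\,y_t\,b$, and an infimum is an \emph{upper}-semicontinuous functional; moreover, factorizations of the approximants do not converge to a factorization of the limit (a product $a^{(k)}y_t^{(k)}b^{(k)}$ does not converge weakly to $ay_tb$ when each factor converges weakly). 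The self-adjoint characterization \eqref{241014.5} does give lower semicontinuity under weak $L_p$-convergence via reflexivity and weak closedness of the positive cone, but $S_{b,t}g_\lambda$ is not self-adjoint for general $b$ and $g$, and decomposing into four positive pieces destroys the claim of ``the same constant $C$.'' The tool that actually works -- and the one the paper explicitly says its omitted proof uses -- is the duality $L_p(\mathcal N;l_\infty)^*\cong L_{p'}(\mathcal N;l_1)$: one tests against a finitely supported $(y_t)$ in the unit ball of $L_{p'}(L_\infty(\mathbb T^d)\overline\otimes\mathcal M;l_1)$, transfers it to a compactly supported dual sequence on $\mathbb R^d$, and passes to the limit $\lambda\to\infty$ in scalar pairings $\sum_t\int\tau(S_{b,t}g_\lambda\,y_t^*)$, where ordinary dominated convergence applies. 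Your proposal gestures at this step but does not supply the duality argument that makes it rigorous; as written, the estimate you claim as the ``heart of the argument'' is not proved.

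A secondary remark: even after moving to a finite index set $J$ via \eqref{0118.1}, the objects $S_{b,t}g_\lambda$ live in $L_p(\mathcal N)$ with $\mathcal N=L_\infty(\mathbb R^d)\overline\otimes\mathcal M$, while $\tilde S_{b,t}f$ lives in a different algebra $L_\infty(\mathbb T^d)\overline\otimes\mathcal M$. Any appeal to ``weak convergence + lower semicontinuity in $L_p(\mathcal N;l_\infty)$'' must first explain how the $\mathbb T^d$-side norm is recovered inside the $\mathbb R^d$-side space; this is exactly what the $\lambda^{-d/p}$-normalized pairing in the duality approach handles, and is glossed over in your writeup.
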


	The proof of Lemma \ref{241113.1} is similar to the classical one in \cite[Theorem 4.3.12]{Graf2008}, which uses
	the duality of $L_{p}( \mathcal{N};l_{\infty})$ with $L_{p'}( \mathcal{N};l_{1})$ for $1<p<\infty$; we omit the details here.

	\vspace{0.2cm}

	\noindent\textbf{Proof of Proposition \ref{241118.1}}:
	Let $1<p<\infty$ and  $f\in L_{p}(L_{\infty}(\mathbb{T}^d)\overline{\otimes}\mathcal{M})$. We  denote 
	\begin{align*}
		\tilde{A}^{t}f(x)=\sum_{n\in\mathbb{Z}^d}m(t|n|)\widehat{f}(n)e^{2\pi\mathrm{i}\langle   n,x\rangle},
	\end{align*}
	where $m$ is the continuous function given in (\ref{240612.1}). Corollary  \ref{24328.5}  shows that the maximal operator  $(A^{t})_{t>0}$ defined by (\ref{24329.3}) is  bounded  from $L_{p}(\mathcal{N})$ to  $L_{p}( \mathcal{N};l_{\infty}(\mathbb{R}_{+}))$. Applying it to Lemma \ref{241113.1}, we  deduce  that
	\begin{align*}
		\|\sup_{t>0}\tilde{A}^{t}f\|_{ L_{p}(L_{\infty}(\mathbb{T}^d)\overline{\otimes}\mathcal{M})}\leq C_{d,p}\|f\|_{ L_{p}(L_{\infty}(\mathbb{T}^d)\overline{\otimes}\mathcal{M})}.
	\end{align*}
	For any $t>0$,  the  factorization  $R_{j}^{t}f=\tilde{A}^{t}(R_{j}f)$ also holds on von Neumann algebra $ L_{\infty}(\mathbb{T}^d)\overline{\otimes}\mathcal{M}$, which implies  that  the optimal constant $C_{d,p}$ in the inequality
	\begin{align*}
		\|\sup_{t>0}R_{j}^{t}f\|_{L_{p}(L_{\infty}(\mathbb{T}^d)\overline{\otimes}\mathcal{M})}\leq C_{d,p}\|R_{j}f\|_{L_{p}(L_{\infty}(\mathbb{T}^d)\overline{\otimes}\mathcal{M})}
	\end{align*}
	is controlled by
	the optimal constant $D_{d,p}$ in the inequality
	\begin{align*}
		\|\sup_{t>0}\tilde{A}^{t}f\|_{L_{p}(L_{\infty}(\mathbb{T}^d)\overline{\otimes}\mathcal{M})}\leq D_{d,p}\|f\|_{L_{p}(L_{\infty}(\mathbb{T}^d)\overline{\otimes}\mathcal{M})}.
	\end{align*}
	Therefore,   (\ref{241112.1}) in Proposition \ref{241118.1} is proved.  As for  (\ref{241112.2}), its  proof is similar and we  omit the details. $\hfill\square$

	\section{Maximal Riesz transform on  quantum Euclidean space }\label{24913.1}
	In this section, we study the boundedness of the maximal Riesz transforms in terms of the Riesz transforms on the quantum Euclidean space. Since the noncommutative space  $L_{p}(\mathbb{R}_{\theta}^{d})$ can not be embedded isometrically into $L_{p}(\mathbb{R}^d;L_{p}(\mathbb{R}_{\theta}^{d}))$ via a natural  isomorphic mapping,   the semi-commutative result in the previous section (see Proposition \ref{24328.3})  does not directly apply in this case.  To establish our results,  we invoke the noncommutative Calder\'{o}n's  transference principle (see \cite[Theorem 3.1]{MR4202493}) and   the amenability of $\mathbb{R}^d$ as a group, i.e., Lemma \ref{240610.2} (see \cite[Corollary 4.14]{MR961261}).
	\begin{lemma}\label{240610.2}
		For any compact subset $K\subset\mathbb{R}^d$, we have
		\begin{align*}
			\inf\left\{\frac{|K-F|}{|F|}: F\subset \mathbb{R}^d~\mbox{is compact and} ~|F|>0     \right\}=1.
		\end{align*}
	\end{lemma}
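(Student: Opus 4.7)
The plan is to establish the two inequalities $\geq$ and $\leq$ that force the infimum to equal $1$.

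For the lower bound, I fix any admissible compact $F \subseteq \mathbb{R}^d$ with $|F| > 0$ and pick any $k_0 \in K$ (the statement implicitly assumes $K$ is nonempty, since otherwise $K - F = \emptyset$ and the infimum would be $0$). Since $k_0 - F \subseteq K - F$ and $|k_0 - F| = |F|$ by translation invariance and reflection symmetry of Lebesgue measure, I conclude $|K - F| \geq |F|$; hence $|K-F|/|F| \geq 1$ for every such $F$.

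For the matching upper bound, I construct a minimizing sequence. The natural choice is the closed Euclidean ball $F_R = B(0, R)$ for $R > 0$. Since $K$ is compact, $K \subseteq B(0, R_0)$ for some $R_0 > 0$, so
\begin{equation*}
K - F_R \subseteq B(0, R_0) - B(0, R) = B(0, R + R_0),
\end{equation*}
where the last equality uses that $B(0, R)$ is symmetric about the origin. Writing $\omega_d = |B(0, 1)|$ and taking Lebesgue measures gives
\begin{equation*}
\frac{|K - F_R|}{|F_R|} \leq \frac{\omega_d (R + R_0)^d}{\omega_d R^d} = \left(1 + \frac{R_0}{R}\right)^d,
\end{equation*}
which tends to $1$ as $R \to \infty$. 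Combining the two bounds yields the claim.

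The argument presents no real obstacle; it is a concrete manifestation of the amenability of the additive group $\mathbb{R}^d$, with the balls $(F_R)_{R>0}$ serving as a F{\o}lner sequence.
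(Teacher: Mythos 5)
Your argument is correct. Both inequalities are established cleanly: the lower bound $|K-F|\geq|F|$ (for nonempty $K$) via the translate $k_0-F\subseteq K-F$ together with translation and reflection invariance of Lebesgue measure, and the upper bound via the F{\o}lner sequence $F_R=B(0,R)$, for which $|K-F_R|/|F_R|\leq(1+R_0/R)^d\to1$. The one needed inclusion $B(0,R_0)-B(0,R)\subseteq B(0,R_0+R)$ is immediate from the triangle inequality. It is worth pointing out that the paper does not prove this lemma at all; it simply invokes it as the amenability of $\mathbb{R}^d$ and cites Paterson's book (Corollary 4.14), which treats general locally compact amenable groups via properties equivalent to the F{\o}lner condition. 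Your contribution is to replace that reference with a short, self-contained Euclidean computation, which is more elementary and entirely sufficient for the paper's purposes; the cost is that it does not generalize to other amenable groups, but that generality is not used here. Your parenthetical remark that the statement tacitly assumes $K\neq\emptyset$ is also accurate and harmless, since in the paper $K$ is always a nonempty neighborhood of truncation scales.
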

	
	\vspace{0.1cm}
	
	\noindent\textbf{Proof of Theorem \ref{24521.3}}:
	It suffices to consider  the dense  subset $\mathcal{S}(\mathbb{R}^{d}_{\theta})$. Let $\mathbf{x}=U(f)\in \mathcal{S}(\mathbb{R}^{d}_{\theta})$ for some $f\in \mathcal{S}(\mathbb{R}^{d})$. Note that  (\ref{24329.4}) and  (\ref{24829.1}) imply that $m(\varepsilon |\xi|)=\widehat{h}(\varepsilon\xi)$, which shows
	\begin{align*}
		R_{j}^{\varepsilon}\mathbf{x}
		=&\int_{\mathbb{R}^d}f(\xi)U(\xi)(-\mathrm{i})\frac{\xi_{j}}{|\xi|}\widehat{h}({\varepsilon\xi})d\xi\\
		=&\int_{\mathbb{R}^d}f(\xi)U(\xi)(-\mathrm{i})\frac{\xi_{j}}{|\xi|}\int_{\mathbb{R}^d}e^{-2\pi \mathrm{i}\langle \varepsilon \xi, x  \rangle}h(x)dxd\xi\\
		=&\int_{\mathbb{R}^d}h(x)U\bigg(e^{-2\pi \mathrm{i}\langle \varepsilon x,\cdot  \rangle}f(\cdot)(-\mathrm{i})\frac{(\cdot)_{j}}{|\cdot|}\bigg)dx\\
		=&\int_{\mathbb{R}^d} h(x)T_{- \varepsilon x}R_{j}\mathbf{x}\,dx,
	\end{align*}
	where the last equality follows from (\ref{24065.3}). Then we use the triangle inequality to arrive at
	\begin{align}\label{24731.1}
		\|\sup_{\varepsilon>0}R_{j}^{\varepsilon}\mathbf{x}\|_{L_{p}(\mathbb{R}^{d}_{\theta})}=&\bigg\|\sup_{\varepsilon>0}\int_{\mathbb{R}^d} h(x)T_{-  \varepsilon x}R_{j}\mathbf{x}\,dx \bigg\|_{L_{p}(\mathbb{R}^{d}_{\theta})}\nonumber\\
		\leq& \bigg\|\sup_{\varepsilon>0} \int_{2B} h(x)T_{- \varepsilon x}R_{j}\mathbf{x}\,dx \bigg\|_{L_{p}(\mathbb{R}^{d}_{\theta})}\nonumber\\
		&+  \sum_{k\geq1}\bigg\|\sup_{\varepsilon>0} \int_{2^{k}<|x|\leq 2^{k+1}} h(x)T_{ -  \varepsilon x}R_{j}\mathbf{x}\,dx \bigg\|_{L_{p}(\mathbb{R}^{d}_{\theta})}.
	\end{align}
	
	For the first term on the right-hand side of (\ref{24731.1}), we apply  (\ref{0118.1}) to  get
	\begin{align*}
		\bigg\|\sup_{\varepsilon>0} \int_{2B} h(x)T_{ - \varepsilon x}R_{j}\mathbf{x}\,dx \bigg\|_{L_{p}(\mathbb{R}^{d}_{\theta})}=\sup_{J\subset \mathbb{R}_{+},\, J \mbox{ finite}}	\bigg\|\sup_{\varepsilon\in J} \int_{2B} h(x)T_{ -  \varepsilon x}R_{j}\mathbf{x}\,dx \bigg\|_{L_{p}(\mathbb{R}^{d}_{\theta})}.
	\end{align*}
	In the following, $J$ will be  fixed. At the same time, we choose a compact  subset $K\subset \mathbb{R}^d$ such that $2\varepsilon B$ is contained in $K$ for all $\varepsilon\in J $. Lemma \ref{240610.2} shows that for any $\delta>0$, there exists  a compact subset $F\subset \mathbb{R}^d$ with $|F|>0$ such that $\frac{|K-F|}{|F|}\leq 1+\delta$.  Since, by Remark \ref{241215.5},   for any $t>0$, $T_{t}$ extends to an isometry on $L_{p}( \mathbb{R}^{d}_{\theta};l_{\infty}(\mathbb{R}_{+}))$ (see \cite[Proposition 5.3]{MR4611833}),   we  thus obtain 
	\begin{align*}
		\bigg\|\sup_{\varepsilon\in J} \int_{2B} h(x)T_{ -  \varepsilon x}R_{j}\mathbf{x}\,dx \bigg\|^{p}_{L_{p}(\mathbb{R}^{d}_{\theta})}=&\frac{1}{|F|}\int_{F}\bigg\|\sup_{\varepsilon\in J} \int_{2B} h(x)T_{ -  \varepsilon x}R_{j}\mathbf{x}\,dx \bigg\|^{p}_{L_{p}(\mathbb{R}^{d}_{\theta})}dt\\
		=&\frac{1}{|F|}\int_{F}\bigg\|\sup_{\varepsilon\in J} T_{-t}\int_{2B} h(x)T_{t -  \varepsilon x}R_{j}\mathbf{x}\,dx \bigg\|^{p}_{L_{p}(\mathbb{R}^{d}_{\theta})}dt\\
		=&\frac{1}{|F|}\int_{F}\bigg\|\sup_{\varepsilon\in J} \int_{2B} h(x)T_{t -  \varepsilon x}R_{j}\mathbf{x}\,dx \bigg\|^{p}_{L_{p}(\mathbb{R}^{d}_{\theta})}dt.
	\end{align*}
	Now, decompose   $T_{t -  \varepsilon x}R_{j}\mathbf{x}$ into the linear combination of four positive parts. By the triangle inequality, the above  equality is controlled by the sum of the following four terms:
	\begin{align*}
		\uppercase\expandafter{\romannumeral2}_{1}=\frac{1}{|F|}\int_{F}\bigg\|\sup_{\varepsilon\in J} \int_{2B} h(x)\Re^{+}(
		T_{t-  \varepsilon x}R_{j}\mathbf{x}) dx \bigg\|_{L_{p}(\mathbb{R}^{d}_{\theta})}^{p}dt,\\
		\uppercase\expandafter{\romannumeral2}_{2}=\frac{1}{|F|}\int_{F}\bigg\|\sup_{\varepsilon\in J} \int_{2B} h(x)\Re^{-}(
		T_{t-  \varepsilon x}R_{j}\mathbf{x}) dx \bigg\|_{L_{p}(\mathbb{R}^{d}_{\theta})}^{p}dt,\\
		\uppercase\expandafter{\romannumeral2}_{3}=\frac{1}{|F|}\int_{F}\bigg\|\sup_{\varepsilon\in J} \int_{2B} h(x)\Im^{+}(	T_{t-  \varepsilon x}R_{j}\mathbf{x}) dx \bigg\|_{L_{p}(\mathbb{R}^{d}_{\theta})}^{p}dt,\\
		\uppercase\expandafter{\romannumeral2}_{4}=\frac{1}{|F|}\int_{F}\bigg\|\sup_{\varepsilon\in J} \int_{2B} h(x)\Im^{-}(	T_{t-  \varepsilon x}R_{j}\mathbf{x}) dx \bigg\|_{L_{p}(\mathbb{R}^{d}_{\theta})}^{p}dt.
	\end{align*}
	
	We first deal with $\uppercase\expandafter{\romannumeral2}_{1}$. Fix $s$ such that $1<s<p$. The fact that $h\in L_{s}(2B)$   and the H\"{o}lder inequality (\ref{3802}) imply that
	\begin{align*}
		\int_{2B} h(x)\Re^{+}(
		T_{t-  \varepsilon x}R_{j}\mathbf{x}) dx\leq& \int_{2B} |h(x)|\Re^{+}(
		T_{t-  \varepsilon x}R_{j}\mathbf{x}) dx\\
		\leq& \bigg(\int_{2B}|h(x)|^{s'}dx\bigg)^{\frac{1}{s'}} \bigg(\int_{2B}(\Re^{+}(
		T_{t-  \varepsilon x}R_{j}\mathbf{x}))^{s }dx\bigg)^{\frac{1}{s}}\\ 
		=& C_{s,d} \bigg(\frac{1}{|2B|}\int_{2B}(\Re^{+}(
		T_{t- \varepsilon x}R_{j}\mathbf{x}))^{s }dx\bigg)^{\frac{1}{s}}\\
		=& C_{s,d}\bigg(\frac{1}{|2\varepsilon B|}\int_{t-2 \varepsilon B}(\Re^{+}(
		T_{x}R_{j}\mathbf{x}))^{s }dx\bigg)^{\frac{1}{s}}. 
	\end{align*}
	Since $t\in F$ and $\cup_{\varepsilon\in J}2\varepsilon B\subset K$, we have
	\begin{align*}
		\int_{2B} h(x)\Re^{+}(
		T_{t-  \varepsilon x}R_{j}\mathbf{x}) dx\leq C_{s,d} \bigg(\frac{1}{|2\varepsilon B|}\int_{t-2 \varepsilon B}(\Re^{+}(
		T_{x}R_{j}\mathbf{x}\,\chi_{F-K}(x)))^{s }dx\bigg)^{\frac{1}{s}}. 
	\end{align*}
	Applying  Proposition \ref{241227.1} to  the function   $(\Re^{+}(
	T_{(\cdot)}R_{j}\mathbf{x}\,\chi_{F-K}(\cdot)))^{s }\in L_{\frac{p}{s}}(\mathbb{R}^d;L_{\frac{p}{s}}(\mathbb{R}^d_{\theta}))$,   we   find a  positive operator-valued function $F_{1 }\in L_{\frac{p}{s}}(\mathbb{R}^d;L_{\frac{p}{s}}(\mathbb{R}^d_{\theta}))$ satisfying
	\begin{align*}
		\bigg(\frac{1}{|2\varepsilon B|}\int_{t-2 \varepsilon B}(\Re^{+}(
		T_{x}R_{j}\mathbf{x}\,\chi_{F-K}(x)))^{s }dx\bigg)^{\frac{1}{s}}\leq F_{1 }^{\frac{1}{s}}(t),~~\forall \varepsilon\in J,
	\end{align*}
	with
	\begin{align}\label{241215.6}
		\|F_{1 }^{\frac{1}{s}}\|_{L_{p}(\mathbb{R}^d;L_{p}(\mathbb{R}^d_{\theta}))}\leq C_{d}^{\frac{1}{s}}\frac{p^{\frac{2}{s}}}{(p-s)^{\frac{2}{s}}}\|\Re^{+}(
		T_{(\cdot)}R_{j}\mathbf{x}\,\chi_{F-K}(\cdot))\|_{L_{p}(\mathbb{R}^d;L_{p}(\mathbb{R}^d_{\theta}))}.
	\end{align}
	We  conclude that 
	\begin{align}\label{241213.1}
		-F_{1 }^{\frac{1}{s}}(t)\lesssim \int_{2B} h(x)\Re^{+}(
		T_{t-  \varepsilon x}R_{j}\mathbf{x}) dx\lesssim  F_{1 }^{\frac{1}{s}}(t),~~\forall \varepsilon\in J.
	\end{align}
	Combining  (\ref{24829.2}), (\ref{241215.6}), and (\ref{241213.1}),  we have 
	\begin{align*}
		\uppercase\expandafter{\romannumeral2}_{1}\leq\frac{1}{|F|}\|F_{1}^{\frac{1}{s}}\|^p_{L_{p}(\mathbb{R}^d;L_{p}(\mathbb{R}^d_{\theta}))}
		\lesssim &  \frac{1}{|F|}\|\Re^{+}(T_{(\cdot)}R_{j}\mathbf{x}\,\chi_{F-K}(\cdot))\|^p_{L_{p}(\mathbb{R}^d;L_{p}(\mathbb{R}^d_{\theta}))}\\
		\lesssim& \frac{1}{|F|}\|T_{(\cdot)}R_{j}\mathbf{x}\,\chi_{F-K}(\cdot)\|^p_{L_{p}(\mathbb{R}^d;L_{p}(\mathbb{R}^d_{\theta}))}\\
		\lesssim&	 \frac{|K-F|}{|F|}\|R_{j}\mathbf{x}\|^p_{L_{p}(\mathbb{R}^d_{\theta})}\\
		\lesssim &(1+\delta)\|R_{j}\mathbf{x}\|^p_{L_{p}(\mathbb{R}^d_{\theta})}.
	\end{align*}
	The same argument  also applies to $\uppercase\expandafter{\romannumeral2}_{2}$, $\uppercase\expandafter{\romannumeral2}_{3}$, and $\uppercase\expandafter{\romannumeral2}_{4}$, which concludes that
	\begin{align}\label{24065.4}
		\bigg\|\sup_{\varepsilon\in J} \int_{2B} h(x)T_{ - \varepsilon x}R_{j}\mathbf{x}\,dx \bigg\|^{p}_{L_{p}(\mathbb{R}^{d}_{\theta})}\lesssim (1+\delta)\|R_{j}\mathbf{x}\|^p_{L_{p}(\mathbb{R}^{d}_{\theta})}.
	\end{align}
	The implicit constant in  (\ref{24065.4}) is independent of $J$. Thus, the arbitrariness of $J$ and $\delta$ allows us to get 
	\begin{align}\label{24424.1}
		\bigg\|\sup_{\varepsilon>0} \int_{2B} h(x)T_{ - \varepsilon x}R_{j}\mathbf{x}\,dx \bigg\|_{L_{p}(\mathbb{R}^{d}_{\theta})}\lesssim \|R_{j}\mathbf{x}\|_{L_{p}(\mathbb{R}^{d}_{\theta})},
	\end{align}
	which gives an upper bound for the  first term of (\ref{24731.1}).
	
	Now, we pay attention to the summation term of (\ref{24731.1}). For any $k\geq 1$, applying  (\ref{0118.1}) again, we have 
	\begin{align*}
		\bigg\|\sup_{\varepsilon>0} \int_{2^{k}<|x|\leq 2^{k+1}} &h(x)T_{ -  \varepsilon x}R_{j}\mathbf{x}\,dx \bigg\|_{L_{p}(\mathbb{R}^{d}_{\theta})}\\
		&=\sup_{\tilde{J}\subset\mathbb{R}_{+},\,\tilde{J} \mbox{ finite}}	\bigg\|\sup_{\varepsilon\in \tilde{J}} \int_{2^{k}<|x|\leq 2^{k+1}} h(x)T_{ -\varepsilon x}R_{j}\mathbf{x}\,dx \bigg\|_{L_{p}(\mathbb{R}^{d}_{\theta})}.
	\end{align*}
	In the following, we fix  $\tilde{J}$ and choose a compact   subset $\widetilde{K}\subset \mathbb{R}^d$ such that $2^{k+1}\varepsilon B$ is contained in $\widetilde{K}$ for all $\varepsilon\in \tilde{J}$. For any ${\delta}>0$, Lemma \ref{240610.2} implies that there exists a compact subset $\widetilde{F}\subset\mathbb{R}^d$ with $|\widetilde{F}|>0$ satisfying  $\frac{|\widetilde{K}-\widetilde{F}|}{|\widetilde{F}|}\leq 1+{\delta}$. Since for any $t>0$, $T_{t}$ extends to an isometry on $L_{p}( \mathbb{R}^{d}_{\theta};l_{\infty}(\mathbb{R}_{+}))$, we have  
	\begin{align*}
		\bigg\|\sup_{\varepsilon\in \tilde{J}} \int_{2^{k}<|x|\leq 2^{k+1}} &h(x)T_{ - \varepsilon x}R_{j}\mathbf{x}\,dx \bigg\|^{p}_{L_{p}(\mathbb{R}^{d}_{\theta})}\\
		= &\frac{1}{|\widetilde{F}|}\int_{\widetilde{F}}\bigg\|\sup_{\varepsilon\in \tilde{J}} T_{-t}\int_{2^{k}<|x|\leq 2^{k+1}} h(x)T_{ t-  \varepsilon x}R_{j}\mathbf{x}\,dx \bigg\|^{p}_{L_{p}(\mathbb{R}^{d}_{\theta})}dt\\
		=& \frac{1}{|\widetilde{F}|}\int_{\widetilde{F}}\bigg\|\sup_{\varepsilon\in \tilde{J}} \int_{2^{k}<|x|\leq 2^{k+1}} h(x)T_{ t-  \varepsilon x}R_{j}\mathbf{x}\,dx \bigg\|^{p}_{L_{p}(\mathbb{R}^{d}_{\theta})}dt.
	\end{align*}
	Decomposing   $T_{t - \varepsilon x}R_{j}\mathbf{x}$ into the linear combination of four positive parts and using the triangle inequality,  
	above   equality is controlled by the sum of the following four terms:
	\begin{align*}
		\uppercase\expandafter{\romannumeral2}_{k,1}= \frac{1}{|\widetilde{F}|}\int_{\widetilde{F}}\bigg\|\sup_{\varepsilon\in \tilde{J}} \int_{2^{k}<|x|\leq 2^{k+1}} h(x)\Re^{+}(
		T_{t-\varepsilon x}R_{j}\mathbf{x}) dx \bigg\|_{L_{p}(\mathbb{R}^{d}_{\theta})}^{p}dt,\\
		\uppercase\expandafter{\romannumeral2}_{k,2}=\frac{1}{|\widetilde{F}|}\int_{\widetilde{F}}\bigg\|\sup_{\varepsilon\in \tilde{J}} \int_{2^{k}<|x|\leq 2^{k+1}} h(x)\Re^{-}(
		T_{t-\varepsilon x}R_{j}\mathbf{x}) dx \bigg\|_{L_{p}(\mathbb{R}^{d}_{\theta})}^{p}dt,\\
		\uppercase\expandafter{\romannumeral2}_{k,3}=\frac{1}{|\widetilde{F}|}\int_{\widetilde{F}}\bigg\|\sup_{\varepsilon\in \tilde{J}} \int_{2^{k}<|x|\leq 2^{k+1}} h(x)\Im^{+}(	T_{t- \varepsilon x}R_{j}\mathbf{x}) dx \bigg\|_{L_{p}(\mathbb{R}^{d}_{\theta})}^{p}dt,\\
		\uppercase\expandafter{\romannumeral2}_{k,4}=\frac{1}{|\widetilde{F}|}\int_{\widetilde{F}}\bigg\|\sup_{\varepsilon\in \tilde{J}} \int_{2^{k}<|x|\leq 2^{k+1}} h(x)\Im^{-}(	T_{t- \varepsilon x}R_{j}\mathbf{x}) dx \bigg\|_{L_{p}(\mathbb{R}^{d}_{\theta})}^{p}dt.
	\end{align*}
	
	We first estimate $\uppercase\expandafter{\romannumeral2}_{k,1}$.  The fact that
	$|h(x)|\leq\frac{C}{|x|^{d+1}}$ for $|x|\geq 2$ shows that
	\begin{align*}
		\int_{2^{k}<|x|\leq 2^{k+1}} h(x)\Re^{+}(
		T_{t-\varepsilon x}R_{j}\mathbf{x}) dx\leq &\int_{2^{k}<|x|\leq 2^{k+1}} |h(x)|\Re^{+}(
		T_{t-\varepsilon x}R_{j}\mathbf{x}) dx\\
		\leq&C \int_{2^{k}<|x|\leq 2^{k+1}}\frac{1}{2^{k(d+1)}}\Re^{+}(
		T_{t-  \varepsilon x}R_{j}\mathbf{x}) dx\\
		\leq&C_{d}\,2^{-k}  \frac{1}{|2^{k+1}B|}\int_{2^{k+1}B}\Re^{+}(
		T_{t-  \varepsilon x}R_{j}\mathbf{x}) dx\\
		=&C_{d}\,2^{-k}  \frac{1}{|2^{k+1} \varepsilon B|}\int_{t-2^{k+1} \varepsilon B}\Re^{+}(
		T_{x}R_{j}\mathbf{x}) dx.	 
	\end{align*}
	Since $t\in \tilde{F}$ and $\cup_{\varepsilon\in \tilde{J}}2^{k+1}\varepsilon B\subset \tilde{K}$, we have
	\begin{align*}
		\int_{2^{k}<|x|\leq 2^{k+1}} h(x)\Re^{+}(
		T_{t-\varepsilon x}R_{j}\mathbf{x}) dx\leq C_{d}\,2^{-k}  \frac{1}{|2^{k+1} \varepsilon B|}\int_{t-2^{k+1} \varepsilon B}\Re^{+}(
		T_{x}R_{j}\mathbf{x}\,\chi_{\tilde{F} -\tilde{K} }(x)) dx.
	\end{align*}
	Applying Proposition \ref{241227.1} to the function $ \Re^{+}(
	T_{(\cdot)}R_{j}\mathbf{x}\,\chi_{\tilde{F} -\tilde{K}}(\cdot)) \in L_{p}(\mathbb{R}^d;L_{p}(\mathbb{R}^d_{\theta}))$, there exists a positive operator-valued function $F_{k,1}\in L_{p}(\mathbb{R}^d;L_{p}(\mathbb{R}^d_{\theta}))$ such that  
	\begin{align*} 
		\frac{1}{|2^{k+1}\varepsilon B|}\int_{t-2^{k+1}\varepsilon B}\Re^{+}(
		T_{x}R_{j}\mathbf{x}\,\chi_{\tilde{F}-\tilde{K}}(x))dx\leq F_{k,1}(t),~~\forall \varepsilon\in \tilde{J},
	\end{align*}
	with
	\begin{align}\label{241215.7} 
		\|F_{k,1}\|_{L_{p}(\mathbb{R}^d;L_{p}(\mathbb{R}^d_{\theta}))}\lesssim \|\Re^{+}(
		T_{(\cdot)}R_{j}\mathbf{x}\,\chi_{\tilde{K}-\tilde{F}}(\cdot))\|_{L_{p}(\mathbb{R}^d;L_{p}(\mathbb{R}^d_{\theta}))}.
	\end{align} 
	We conclude that
	\begin{align}\label{241213.2}
		-2^{-k}F_{k,1}(t)\lesssim\int_{2^{k}<|x|\leq 2^{k+1}} h(x)\Re^{+}(
		T_{t-\varepsilon x}R_{j}\mathbf{x}) dx\lesssim  2^{-k}F_{k,1}(t),\quad \forall\varepsilon\in \tilde{J}.
	\end{align} 
	Combining   (\ref{24829.2}), (\ref{241215.7}) and (\ref{241213.2}),   we have
	\begin{align*}
		\uppercase\expandafter{\romannumeral2}_{k,1}\lesssim& 2^{-kp}\frac{1}{|\widetilde{F}|}	\|F_{k,1}\|^{p}_{L_{p}(\mathbb{R}^d;L_{p}(\mathbb{R}^d_{\theta}))}\\
		\lesssim& 2^{-kp}\frac{1}{|\widetilde{F}|}\|
		T_{(\cdot)}R_{j}\mathbf{x}\,\chi_{\widetilde{K}-\widetilde{F}}(\cdot)\|^p_{L_{p}(\mathbb{R}^d;L_{p}(\mathbb{R}^d_{\theta}))}\\
		\lesssim& 2^{-kp}\frac{|\widetilde{F}-\widetilde{K}|}{|\widetilde{F}|}\|R_{j}\mathbf{x}\|^p_{L_{p}(\mathbb{R}^{d}_{\theta})}\\
		\lesssim& 2^{-kp}(1+\delta)\|R_{j}\mathbf{x}\|^p_{L_{p}(\mathbb{R}^{d}_{\theta})}.
	\end{align*}
	The same method works for other three terms $\uppercase\expandafter{\romannumeral2}_{k,2}$, $\uppercase\expandafter{\romannumeral2}_{k,3}$,  and $\uppercase\expandafter{\romannumeral2}_{k,4}$ as well. Thus, we obtain  
	\begin{align}\label{241014.6}
		\bigg\|\sup_{\varepsilon\in \tilde{J}} \int_{2^{k}<|x|\leq 2^{k+1}} h(x)T_{ - \varepsilon x}R_{j}\mathbf{x}\, dx \bigg\|^{p}_{L_{p}(\mathbb{R}^{d}_{\theta})}\lesssim 2^{-kp}(1+\delta)\|R_{j}\mathbf{x}\|^p_{L_{p}(\mathbb{R}^{d}_{\theta})}.
	\end{align}
	The  implicit constant in  (\ref{241014.6})  is independent of $\tilde{J}$. Since $\tilde{J}$ and $\delta$ are all arbitrarily chosen, we conclude that
	\begin{align}\label{24424.2}
		\bigg\|\sup_{\varepsilon>0} \int_{2^{k}<|x|\leq 2^{k+1}} h(x)T_{ - \varepsilon x}R_{j}\mathbf{x}\, dx \bigg\|_{L_{p}(\mathbb{R}^{d}_{\theta})}\lesssim 2^{-k}\|R_{j}\mathbf{x}\|_{L_{p}(\mathbb{R}^{d}_{\theta})}.
	\end{align}
	
	Finally, combining (\ref{24731.1}), (\ref{24424.1}), and (\ref{24424.2}), we get 
	\begin{align*}
		\|\sup_{\varepsilon>0}R_{j}^{\varepsilon}\mathbf{x}\|_{L_{p}(\mathbb{R}^{d}_{\theta})}\lesssim \bigg(1+\sum_{k\geq 1}2^{-k}\bigg)\|R_{j}\mathbf{x}\|_{L_{p}(\mathbb{R}^{d}_{\theta})}\lesssim \|R_{j}\mathbf{x}\|_{L_{p}(\mathbb{R}^{d}_{\theta})},
	\end{align*}
	which completes the proof. $\hfill\square$
	
	\vspace{0.2cm}
	
	Similar to the quantum tori, the constant $C_{d,p}$ in Theorem \ref{24521.3} is independent of the dimension  when $2\leq p<\infty$. To get it, we need  an analogue of Proposition \ref{24325.3} in quantum Euclidean space.
	\begin{proposition}\label{24725.8}
		Let $\psi\in L_{1}(\mathbb{R}^{d})$ and $\mathbf{x}\in L_{p}(\mathbb{R}^d_{\theta})$ with $1<p<\infty$. Then, we have
		\begin{align*}
			\bigg\|\sup_{r>0}\frac{1}{r}\int_{0}^{r}\psi_{t}*_{\theta}\mathbf{x}\,dt\bigg\|_{L_{p}(\mathbb{R}^{d}_{\theta})}\leq 4C_{p} \|\psi\|_{L_{1}(\mathbb{R}^d)}\|\mathbf{x}\|_{L_{p}(\mathbb{R}^d_{\theta})},
		\end{align*}
		where $C_{p}$ is the $L_{p}$ norm of the one-dimensional  semi-commutative Hardy-Littlewood operator.
	\end{proposition}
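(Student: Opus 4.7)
The plan is to mirror the proof of Proposition \ref{24325.3}, using the one-parameter translation group $(T_{-tu})_{t\in\mathbb{R}}$ on $\mathbb{R}^d_\theta$ in place of the classical directional averages used there. First, by the definition (\ref{24726.3}) of $*_\theta$ combined with the change of variable $s=tu$, I would rewrite
\begin{align*}
\psi_t *_\theta \mathbf{x}=\int_{\mathbb{R}^d}\psi(u)T_{-tu}\mathbf{x}\,du,
\end{align*}
so that Fubini gives
\begin{align*}
\frac{1}{r}\int_{0}^{r}\psi_t*_\theta \mathbf{x}\,dt=\int_{\mathbb{R}^d}\psi(u)\bigg(\frac{1}{r}\int_{0}^{r}T_{-tu}\mathbf{x}\,dt\bigg)du.
\end{align*}
This reduces matters to controlling the inner directional ergodic average $M_r^u \mathbf{x}:=\frac{1}{r}\int_0^r T_{-tu}\mathbf{x}\,dt$ and then integrating against $|\psi(u)|\,du$.

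The key ingredient is the uniform directional maximal inequality
\begin{align*}
\bigg\|\sup_{r>0}\frac{1}{r}\int_{0}^{r}T_{-tu}\mathbf{x}\,dt\bigg\|_{L_p(\mathbb{R}^d_\theta)}\leq C_p\|\mathbf{x}\|_{L_p(\mathbb{R}^d_\theta)},\qquad u\in\mathbb{R}^d,
\end{align*}
which is the quantum analogue of the semi-commutative bound (\ref{24065.7}) used in the proof of Proposition \ref{24325.3}. By Remark \ref{241215.5}, $(T_{-tu})_{t\in\mathbb{R}}$ is a strongly continuous one-parameter group of positive, trace-preserving isometries on $L_p(\mathbb{R}^d_\theta)$; since $(\mathbb{R},+)$ is amenable, this directional estimate follows from the one-dimensional noncommutative Hardy--Littlewood maximal inequality (whose $L_p$ norm equals $C_p$) via the noncommutative Calder\'{o}n transference principle (see \cite[Theorem 3.1]{MR4202493}) cited at the opening of Section \ref{24913.1}.

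With this in hand, I would first assume $\mathbf{x}\geq 0$. Since $T_t$ is positive (Remark \ref{241215.5}), each $M_r^u\mathbf{x}$ is positive; the characterization (\ref{24829.2}) then yields, for every $\varepsilon>0$ and every $u$, a positive element $a^u\in L_p(\mathbb{R}^d_\theta)$ dominating $(M_r^u\mathbf{x})_{r>0}$ with $\|a^u\|_{L_p(\mathbb{R}^d_\theta)}\leq(C_p+\varepsilon)\|\mathbf{x}\|_{L_p(\mathbb{R}^d_\theta)}$. Integrating against $|\psi(u)|$ produces a positive element dominating the entire family $\{\int_{\mathbb{R}^d}\psi(u)M_r^u\mathbf{x}\,du\}_{r>0}$ with $L_p$ norm at most $C_p\|\psi\|_{L_1(\mathbb{R}^d)}\|\mathbf{x}\|_{L_p(\mathbb{R}^d_\theta)}$; for signed or complex $\psi$ the reduction is identical to the one used for the kernel $K$ in the proof of Proposition \ref{24325.3}. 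Finally, decomposing a general $\mathbf{x}$ into the four positive parts $\Re^{\pm}\mathbf{x},\,\Im^{\pm}\mathbf{x}$ and applying the triangle inequality in $L_p(\mathbb{R}^d_\theta;l_\infty(\mathbb{R}_+))$ contributes the extra factor of $4$, giving the stated bound.

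The main obstacle is the rigorous verification of the directional inequality in the second paragraph: one has to confirm that the noncommutative Calder\'{o}n transference principle of \cite{MR4202493} genuinely applies to the action $(T_{-tu})_{t\in\mathbb{R}}$ on $L_p(\mathbb{R}^d_\theta)$ (this requires strong continuity of the group, which is inherited from its representation $\exp(\langle t,\nabla_\theta\rangle)\cdot\exp(-\langle t,\nabla_\theta\rangle)$ in Definition \ref{24918.2}), and that the transferred sublinear operator agrees with $\sup_{r>0}M_r^u\mathbf{x}$ in the sense of $L_p(\mathbb{R}^d_\theta;l_\infty(\mathbb{R}_+))$. Once this directional maximal inequality is established, the remaining steps are direct adaptations of the argument for Proposition \ref{24325.3}.
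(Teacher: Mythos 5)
Your proposal is correct and follows essentially the same route as the paper: rewrite $\frac{1}{r}\int_0^r\psi_t*_\theta\mathbf{x}\,dt=\int_{\mathbb{R}^d}\psi(u)\,\frac{1}{r}\int_0^r T_{-tu}\mathbf{x}\,dt\,du$, establish the directional maximal inequality $\|\sup_{r>0}\frac{1}{r}\int_0^r T_{-tu}\mathbf{x}\,dt\|_{L_p(\mathbb{R}^d_\theta)}\leq C_p\|\mathbf{x}\|_{L_p(\mathbb{R}^d_\theta)}$ by amenability-based transference, integrate against $|\psi(u)|$ using~(\ref{24829.2}), and split a general $\mathbf{x}$ into four positive parts to collect the factor $4$. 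The only (stylistic) difference is in how the directional step is justified: you invoke the abstract noncommutative Calder\'{o}n transference principle of \cite{MR4202493} for the one-parameter group $t\mapsto T_{-tu}$, while the paper carries out that transference by hand, using F\o lner sets for $\mathbb{R}^d$ (Lemma \ref{240610.2}), the isometry of $T_t$ on $L_p(\mathbb{R}^d_\theta;l_\infty(\mathbb{R}_+))$, and the semi-commutative directional bound~(\ref{24065.7}) with the same constant $C_p$.
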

	\begin{proof}
		We first  assume that $\mathbf{x}$ is positive. 	Using   the noncommutative   convolution $*_{\theta}$  defined in (\ref{24726.3}), it is straightforward to see
		\begin{align*}
			\frac{1}{r}\int_{0}^{r}\psi_{t}*_{\theta}\mathbf{x}\,dt=&\frac{1}{r}\int_{0}^{r}\int_{\mathbb{R}^d} \psi_{t}(y)T_{-y}\mathbf{x}\,dydt\\
			=&\int_{\mathbb{R}^d} \psi(y)\frac{1}{r}\int_{0}^{r}T_{-ty}\mathbf{x}\,dtdy.
		\end{align*}
		Once we prove
		\begin{align}\label{24724.2}
			\bigg\|\sup_{r>0}\frac{1}{r}\int_{0}^{r}T_{-t y}\mathbf{x}\,dt\bigg\|_{L_{p}(\mathbb{R}^{d}_{\theta})}\leq  C_{p}\|\mathbf{x}\|_{L_{p}(\mathbb{R}^{d}_{\theta})},
		\end{align}
		the following estimate  will follow from  (\ref{24829.2}),
		\begin{align*}
			\bigg\|\sup_{r>0}\frac{1}{r}\int_{0}^{r}\psi_{t}*_{\theta}\mathbf{x}\,dt\bigg\|_{L_{p}(\mathbb{R}^{d}_{\theta})}=&\bigg\|\sup_{r>0}\int_{\mathbb{R}^d} \psi(y)\frac{1}{r}\int_{0}^{r}T_{-ty}\mathbf{x}\,dtdy\bigg\|_{L_{p}(\mathbb{R}^{d}_{\theta})}\\
			\leq& \int_{\mathbb{R}^d}|\psi(y)|\bigg\| \sup_{r>0}\frac{1}{r}\int_{0}^{r}T_{-t y}\mathbf{x}\,dt\bigg\|_{L_{p}(\mathbb{R}^{d}_{\theta})}dy\\
			\leq&  C_{p}\|\psi\|_{L_{1}(\mathbb{R}^d)}\|\mathbf{x}\|_{L_{p}(\mathbb{R}^{d}_{\theta})}.
		\end{align*}
		For general $\mathbf{x} \in L_{p}(\mathbb{R}^d_{\theta})$, we decompose it into the linear combination of four positive parts, then this proposition is proved.

		Now, we return to proving (\ref{24724.2}). From (\ref{0118.1}), it follows that 
		\begin{align*}
			\bigg\|\sup_{r>0}\frac{1}{r}\int_{0}^{r}T_{-t y} \mathbf{x}\,dt\bigg\|_{L_{p}(\mathbb{R}^{d}_{\theta})}=\sup_{J\subset\mathbb{R}_{+},\,J \mbox{ finite}}\bigg\|\sup_{r\in J}\frac{1}{r}\int_{0}^{r}T_{-ty}\mathbf{x}\,dt\bigg\|_{L_{p}(\mathbb{R}^{d}_{\theta})}.
		\end{align*}
		In the following, we fix $J$ and choose a compact subset ${K}\subset \mathbb{R}^d$ such that $\left\{ -ty:\,\, 0<t<r\right\}$ is contained in ${K}$ for all $r\in J$.  Lemma \ref{240610.2}  shows that,  for any ${\delta}>0$, there exists a compact subset $F \subset \mathbb{R}^d$ with $|F|>0$ such that  $\frac{|{K}-{F}|}{|{F}|}\leq 1+{\delta}$. According to Remark 
		\ref{241215.5} and \cite[Proposition 5.3]{MR4611833},  for any $t>0$, $T_{t}$ extends to an isometry on $L_{p}( \mathbb{R}^{d}_{\theta};l_{\infty}(\mathbb{R}_{+}))$. Hence,
		\begin{align*}
			\bigg\|\sup_{r\in J}\frac{1}{r}\int_{0}^{r}T_{-t y} \mathbf{x}\,dt\bigg\|^p_{L_{p}(\mathbb{R}^{d}_{\theta})}=&\frac{1}{|F|}\int_{F}	\bigg\|\sup_{r\in J}\frac{1}{r}\int_{0}^{r}T_{-t y} \mathbf{x}\,dt\bigg\|^p_{L_{p}(\mathbb{R}^{d}_{\theta})}dz\\
			=&\frac{1}{|F|}\int_{F}	\bigg\|\sup_{r\in J}\frac{1}{r}\int_{0}^{r}T_{z-t y} \mathbf{x}\,dt\bigg\|^p_{L_{p}(\mathbb{R}^{d}_{\theta})}dz\\
			\leq &\frac{1}{|F|}\bigg\|\sup_{r\in J}\frac{1}{r}\int_{0}^{r}T_{z-t y} \mathbf{x}\,\chi_{F-K}(z-ty)dt\bigg\|^p_{L_{p}(\mathbb{R}^{d};L_{p}(\mathbb{R}^{d}_{\theta}))}.
		\end{align*}
		Due to (\ref{24065.7}), the above estimate is bounded by 
		\begin{align*}
			\frac{C_{p}^{p}}{|F|} \|T_{(\cdot)}\mathbf{x}\, \chi_{F-K}(\cdot) \|^p_{L_{p}(\mathbb{R}^{d};L_{p}(\mathbb{R}^{d}_{\theta}))}\leq   C_{p}^{p}(1+\delta) \|\mathbf{x}\|^p_{L_{p}(\mathbb{R}^{d}_{\theta})}.
		\end{align*}
		Finally, the arbitrariness of $J$ and $\delta$ implies that (\ref{24724.2}) holds. 
	\end{proof}
	
	\begin{remark}\label{241226.4}
		Combining  Remark \ref{240620.1} and  Proposition \ref{24725.8}, if $\psi\in L_{1}(\mathbb{R}^{d})$ and $\mathbf{x}\in L_{2}(\mathbb{R}^d_{\theta})$,  we obtain
		\begin{align*}
			\bigg\|\sup_{r>0}\frac{1}{r}\int_{0}^{r}\psi_{t}*_{\theta}\mathbf{x}\,dt\bigg\|_{L_{2}(\mathbb{R}^{d}_{\theta})}\leq 96 \|\psi\|_{L_{1}(\mathbb{R}^d)}\|\mathbf{x}\|_{L_{2}(\mathbb{R}^d_{\theta})}.
		\end{align*}
	\end{remark}

	\vspace{0.1cm}
	
	\noindent\textbf{Proof of Theorem \ref{24728.1}}: Let $\mathbf{x}=U(f)\in \mathcal{S}(\mathbb{R}^{d}_{\theta})$ and $A^{t}$ is defined by the following Fourier multiplier form:
	\begin{align*}
		A^{t}\mathbf{x}=\int_{\mathbb{R}^d}m(t|\xi|)f(\xi)U(\xi)d\xi.
	\end{align*}
	It was shown in \cite[Remark 1]{liu2023p} that $m(t|\xi|)=\widehat{\phi_{t}}(\xi)$, where $\phi_{t}$ is the function given in (\ref{24919.1}). Therefore, we write   $A^{t}\mathbf{x}$   as
	\begin{align*}
		A^{t}\mathbf{x}=&\int_{\mathbb{R}^d}f(\xi)U(\xi)\widehat{\phi_{t}}(\xi)d\xi\\
		=&\int_{\mathbb{R}^d}\phi_{t}(y)\int_{\mathbb{R}^d}f(\xi)e^{-2\pi \mathrm{i} \langle y,\xi  \rangle}U(\xi)d\xi dy\\
		=&\int_{\mathbb{R}^d}\phi_{t}(y)T_{-  y}\mathbf{x}\,dy.
	\end{align*}
	
	We  first prove that
	\begin{align}\label{24726.2}
		\|\sup_{t>0}A^{t}\mathbf{x}\|_{L_{2}(\mathbb{R}^{d}_{\theta})}\leq (96+2\sqrt{2}) \|\mathbf{x}\|_{L_{2}(\mathbb{R}^{d}_{\theta})}.
	\end{align}
	Using integration by parts, we have 
	\begin{align}\label{24725.1}
		\phi_{t}(y)=\frac{1}{t}\int_{0}^{t}\phi_{s}(y)ds+\frac{1}{t}\int_{0}^{t}s\frac{d}{ds}\phi_{s}(y)ds.
	\end{align}
	Combining the triangle inequality with (\ref{24725.1}), the following estimate holds, 
	\begin{align}\label{24724.1}
		\|\sup_{t>0}A^{t}\mathbf{x}\|_{L_{2}(\mathbb{R}^{d}_{\theta})}\leq& \bigg\|\sup_{t>0}\frac{1}{t}\int_{0}^{t}\int_{\mathbb{R}^d}\phi_{s}(y)T_{-  y}\mathbf{x}\, dyds\bigg\|_{L_{2}(\mathbb{R}^{d}_{\theta})}\nonumber\\
		+&\bigg\|\sup_{t>0}\frac{1}{t}\int_{0}^{t}\int_{\mathbb{R}^d}s\frac{d}{ds}\phi_{s}(y)T_{-  y}\mathbf{x}\, dyds\bigg\|_{L_{2}(\mathbb{R}^{d}_{\theta})}.
	\end{align}
	
	By Lemma \ref{241212.4} and Remark \ref{241226.4},  the first term on the  right-hand side of (\ref{24724.1}) is bounded by $96\|\mathbf{x}\|_{L_{2}(\mathbb{R}^{d}_{\theta})}$, i.e.,
	\begin{align}\label{24726.1}
		\bigg\|\sup_{t>0}\frac{1}{t}\int_{0}^{t}\int_{\mathbb{R}^d}\phi_{s}(y)T_{- y}\mathbf{x}\, dyds\bigg\|_{L_{2}(\mathbb{R}^{d}_{\theta})}=\bigg\|\sup_{t>0}\frac{1}{t}\int_{0}^{t} \phi_{s}*_{\theta} \mathbf{x}\,ds\bigg\|_{L_{2}(\mathbb{R}^{d}_{\theta})}
		\leq96\|\mathbf{x}\|_{L_{2}(\mathbb{R}^{d}_{\theta})}.
	\end{align}

	In the following, we  decompose $\int_{\mathbb{R}^d}s\frac{d}{ds}\phi_{s}(y)T_{-  y}\mathbf{x}\,dy$ into the linear combination of four positive parts. Then, according to the triangle inequality, the second term on the  right-hand side of (\ref{24724.1}) is controlled by the sum of the following  four terms:
	\begin{align*}
		\uppercase\expandafter{\romannumeral3}_{1}=\bigg\|\sup_{t>0}\frac{1}{t}\int_{0}^{t}\Re_{+}\bigg(\int_{\mathbb{R}^d}s\frac{d}{ds}\phi_{s}(y)T_{-  y}\mathbf{x}\, dy\bigg)ds\bigg\|_{L_{2}(\mathbb{R}^{d}_{\theta})},\\
		\uppercase\expandafter{\romannumeral3}_{2}=\bigg\|\sup_{t>0}\frac{1}{t}\int_{0}^{t}\Re_{-}\bigg(\int_{\mathbb{R}^d}s\frac{d}{ds}\phi_{s}(y)T_{-  y}\mathbf{x}\, dy\bigg)ds\bigg\|_{L_{2}(\mathbb{R}^{d}_{\theta})},\\
		\uppercase\expandafter{\romannumeral3}_{3}=\bigg\|\sup_{t>0}\frac{1}{t}\int_{0}^{t}\Im_{+}\bigg(\int_{\mathbb{R}^d}s\frac{d}{ds}\phi_{s}(y)T_{-  y}\mathbf{x}\, dy\bigg)ds\bigg\|_{L_{2}(\mathbb{R}^{d}_{\theta})},\\
		\uppercase\expandafter{\romannumeral3}_{4}=\bigg\|\sup_{t>0}\frac{1}{t}\int_{0}^{t}\Im_{-}\bigg(\int_{\mathbb{R}^d}s\frac{d}{ds}\phi_{s}(y)T_{-  y}\mathbf{x}\, dy\bigg)ds\bigg\|_{L_{2}(\mathbb{R}^{d}_{\theta})}.
	\end{align*}
	We first deal with $\uppercase\expandafter{\romannumeral3}_{1}$. The H\"{o}lder inequality (\ref{3802}) shows  that
	\begin{align}\label{24725.4}
		\frac{1}{t}\int_{0}^{t}\Re_{+}\bigg(\int_{\mathbb{R}^d}s\frac{d}{ds}\phi_{s}(y)T_{- y}\mathbf{x}\,dy\bigg)ds\leq \bigg( \int_{0}^{\infty}\bigg(\Re_{+}\bigg(\int_{\mathbb{R}^d}s\frac{d}{ds}\phi_{s}(y)T_{- y}\mathbf{x}\,dy\bigg)\bigg)^2\frac{ds }{s}  \bigg)^{\frac{1}{2}}.
	\end{align}
	Then we apply  (\ref{24829.2}) and (\ref{24725.4})   to get
	\begin{align*}
		\uppercase\expandafter{\romannumeral3}_{1}\leq&\bigg\|\bigg( \int_{0}^{\infty}\bigg(\Re_{+}\bigg(\int_{\mathbb{R}^d}s\frac{d}{ds}\phi_{s}(y)T_{- y}\mathbf{x}\, dy\bigg)\bigg)^2\frac{ds }{s}  \bigg)^{\frac{1}{2}}\bigg\|_{L_{2}(\mathbb{R}^{d}_{\theta})}\\
		\leq &\bigg(\int_{0}^{\infty}\bigg\| \int_{\mathbb{R}^d}s\frac{d}{ds}\phi_{s}(y)T_{- y}\mathbf{x}\, dy\bigg\|^2_{L_{2}(\mathbb{R}^{d}_{\theta})}\frac{ds }{s}  \bigg)^{\frac{1}{2}}\\
		=&\bigg(\int_{0}^{\infty}\bigg\| s\frac{d}{ds}\phi_{s}*_{\theta}\mathbf{x}\bigg\|^2_{L_{2}(\mathbb{R}^{d}_{\theta})}\frac{ds }{s}  \bigg)^{\frac{1}{2}}.
	\end{align*}
	Using Plancherel identity (\ref{241225.2}),   (\ref{24726.4}), and Lemma \ref{241212.4},  we have
	\begin{align*}
		\uppercase\expandafter{\romannumeral3}_{1}\leq  	 \bigg(\int_{0}^{\infty}\bigg\|s\frac{s }{ ds}\widehat{\phi_{s}} \, f\bigg\|^2_{L_{2}(\mathbb{R}^{d})}\frac{ds  }{ s}\bigg)^{\frac{1}{2}}
		\leq \frac{\sqrt{2}}{2}\|\mathbf{x} \|_{L_{2}(\mathbb{R}^{d}_{\theta})}.
	\end{align*}
	The other three terms   $\uppercase\expandafter{\romannumeral3}_{2}$,  $\uppercase\expandafter{\romannumeral3}_{3}$, and $\uppercase\expandafter{\romannumeral3}_{4}$  are handled similarly, which leads to
	\begin{align}\label{24725.6}
		\bigg\|\sup_{t>0}\frac{1}{t}\int_{0}^{t}\int_{\mathbb{R}^d}s\frac{d}{ds}\phi_{s}(y)T_{- y}\mathbf{x}\, dyds\bigg\|_{L_{2}(\mathbb{R}^{d}_{\theta})}\leq 2\sqrt{2} \|\mathbf{x} \|_{L_{2}(\mathbb{R}^{d}_{\theta})}.
	\end{align}
	Combining the preceding estimates (\ref{24724.1}), (\ref{24726.1}), and (\ref{24725.6}), we arrive at   (\ref{24726.2}).
	
	As for $p=\infty$, we use Lemma \ref{241212.4} to deduce 
	\begin{align}\label{24725.5}
		\|\sup_{t>0}A^{t}\mathbf{x} \|_{L_{\infty}(\mathbb{R}^{d}_{\theta})}
		=  \sup_{t>0}\bigg\|\int_{\mathbb{R}^d}\phi_{t}(y)T_{- y}\mathbf{x}\, dy\bigg\|_{L_{\infty}(\mathbb{R}^{d}_{\theta})}
		\leq \|\mathbf{x} \|_{L_{\infty}(\mathbb{R}^{d}_{\theta})}.
	\end{align}
	Finally, taking an interpolation with    (\ref{24726.2}) and (\ref{24725.5}),    we   get for any  $\mathbf{x}\in L_{p}(\mathbb{R}^{d}_{\theta})$,
	\begin{align}\label{241016.1}
		\|\sup_{t>0}A^{t}\mathbf{x} \|_{L_{p}(\mathbb{R}^{d}_{\theta})}\leq (96+2\sqrt{2})^{\frac{2}{p}} \|\mathbf{x} \|_{L_{p}(\mathbb{R}^{d}_{\theta})}, ~~~2\leq p<\infty.
	\end{align}
	
	Now, we use (\ref{241016.1}) to get our main results in Theorem \ref{24728.1}. For any $t>0$, the  truncated Riesz transform  factorization  $R_{j}^{t}\mathbf{x}=A^{t}(R_{j}\mathbf{x})$ holds for $\mathcal{S}(\mathbb{R}^{d}_{\theta})$, which implies    the optimal constant $C_{d,p}$ in the inequality
	\begin{align*}
		\|\sup_{t>0}R_{j}^{t}\mathbf{x}\|_{L_{p}(\mathbb{R}^{d}_{\theta})}\leq C_{d,p}\|R_{j}\mathbf{x}\|_{L_{p}(\mathbb{R}^{d}_{\theta})}
	\end{align*}
	is controlled by
	the optimal constant $D_{d,p}$ in the inequality
	\begin{align*}
		\|\sup_{t>0}A^{t}\mathbf{x}\|_{L_{p}(\mathbb{R}^{d}_{\theta})}\leq D_{d,p}\|\mathbf{x}\|_{L_{p}(\mathbb{R}^{d}_{\theta})}.
	\end{align*}
	Consequently, we obtain
	\begin{align*}
		\|\sup_{\varepsilon>0}R_{j}^{\varepsilon}\mathbf{x}\|_{L_{p}(\mathbb{R}^{d}_{\theta})}\leq (96+2\sqrt{2})^{\frac{2}{p}}\|R_{j}\mathbf{x}\|_{L_{p}(\mathbb{R}^{d}_{\theta})}.
	\end{align*} 
	$\hfill\square$
	
	\vspace{0.5cm}
	
	\noindent\textbf{\small Acknowledgements} {\small The authors would like to express their gratitude to the referee for his comments that improve the exposition of the paper.  Lai, X. is supported by National Natural Science Foundation of China (No. 12271124, No. 12322107 and No. W2441002) and Heilongjiang Provincial Natural Science Foundation of China (YQ2022A005).  Xiong, X. is supported by the National Natural Science Foundation of China (No. 12371138).}
	
	\vspace{0.5cm}
	
	\noindent\textbf{\large Declarations}
	
	\vspace{0.2cm}
	
	\noindent\textbf{Conflict of interest} The authors confirm that there are no conflicts of interest or funding related to this work.

\end{document}